
\documentclass[11pt]{amsart}






\allowdisplaybreaks[1]         











\def\na{\nabla}

\newcommand{\p}{\partial}



\usepackage{amsmath}
\usepackage{amsfonts}
\usepackage{amssymb}
\usepackage{amsthm}
\usepackage{mathtools} 
\usepackage{latexsym}


\usepackage{enumerate}
\usepackage{cancel}
\usepackage{cases}


  \usepackage{caption}
  \usepackage{subcaption}
  \usepackage{graphics} 
  \usepackage{epsfig} 
\usepackage{graphicx} 

\usepackage{pgfplots}
\pgfplotsset{compat=1.13}

\usepackage{float}




 


   
\usepackage{mathrsfs}
\usepackage{fontenc} 
\usepackage{inputenc} 

\usepackage{verbatim}



\theoremstyle{plain}
\newtheorem{theorem}{Theorem}[section]
\newtheorem{proposition}[theorem]{Proposition}
\newtheorem{lemma}[theorem]{Lemma}
\newtheorem{corollary}[theorem]{Corollary}

\theoremstyle{definition}
\newtheorem{definition}[theorem]{Definition}

\theoremstyle{remark}
\newtheorem{remark}[theorem]{Remark}

\setcounter{secnumdepth}{3} 
\numberwithin{equation}{section} 
\numberwithin{figure}{section}   

\usepackage[square,comma,numbers,sort&compress]{natbib}
\usepackage[colorlinks=true, pdfborder={ 0 0 0}]{hyperref}
\hypersetup{urlcolor=blue, citecolor=red}
\usepackage{url}





\newcommand{\vect}[1]{\mathbf{#1}}

\newcommand{\bk}{\vect{k}}

\newcommand{\bu}{\vect{u}}
\newcommand{\bv}{\vect{v}}
\newcommand{\bw}{\vect{w}}

\newcommand{\bx}{\vect{x}}

\newcommand{\bz}{\vect{z}}

\newcommand{\field}[1]{\mathbb{#1}}
\newcommand{\nN}{\field{N}}
\newcommand{\nZ}{\field{Z}}

\newcommand{\nR}{\field{R}}





\newcommand{\bphi}{\boldsymbol{\phi}}
\newcommand{\bomega}{\boldsymbol{\omega}}
\newcommand{\boldeta}{\boldsymbol{\eta}}

\newcommand{\nT}{\mathbb T}


\newcommand{\lp}{\left(}
\newcommand{\rp}{\right)}

\newcommand{\la}{\left\langle}
\newcommand{\ra}{\right\rangle}

\newcommand{\dt}{\partial_t}
\newcommand{\lap}{\triangle}
\newcommand{\grad}{\nabla}
\newcommand{\bee}{\boldsymbol{\eta}^\epsilon}
\newcommand{\R}{\mathbb{R}}
\newcommand{\T}{\mathbb{T}^2}

\newcommand{\cnj}[1]{\overline{#1}}

\newcommand{\abs}[1]{\left\lvert#1\right\rvert}
\newcommand{\set}[1]{\left\{#1\right\}}

\newcommand{\pnt}[1]{\left(#1\right)}




\newcommand{\norm}[1]{\left\|#1\right\|}

\newcommand{\normLp}[2]{\left\|#2 \right\|_{L^{#1}}}
\newcommand{\normHs}[2]{\left\|#2 \right\|_{H^{#1}}}

\usepackage{url}

\usepackage{placeins} 

\newcounter{my_counter}
\setcounter{my_counter}{1} 


\usepackage{enumitem}

\title[2D Kuramoto-Sivashinsky: Algebraic Calming]
{Algebraic calming for the 2D Kuramoto-Sivashinsky equations}

\date{\today}

\author{Matthew Enlow}
\address[Matthew Enlow]{Department of Mathematics, 
                University of Nebraska--Lincoln,
        Lincoln, NE 68588-0130, USA}
\email[Matthew Enlow]{menlow2@huskers.unl.edu}

\author{Adam Larios}
\address[Adam Larios]{Department of Mathematics, 
                University of Nebraska--Lincoln,
        Lincoln, NE 68588-0130, USA}
\email[Adam Larios]{alarios@unl.edu}

\author{Jiahong Wu}
\address[Jiahong Wu]{Department of Mathematics, 
                Oklahoma State University
        Stillwater, OK 74078, USA}
\email[Jiahong Wu]{jiahong.wu@okstate.edu}

\keywords{Kuramoto-Sivashinsky equation, calming, apprximate models, Navier-Stokes equations, global well-posedness, multi--dimensional.}
\thanks{MSC 2010 Classification: 35K25, 35K58, 35B65, 35A35, 65M70}

\begin{document}
\begin{abstract}
We propose an approximate model for the 2D Kuramoto-Sivashinsky equations (KSE) of flame fronts and crystal growth.  We prove that this new ``calmed'' version of the KSE is globally well-posed, and moreover, its solutions converge to solutions of the KSE on the time interval of existence and uniqueness of the KSE at an algebraic rate.  In addition, we provide simulations of the calmed KSE, illuminating its dynamics.  These simulations also indicate that our analytical predictions of the convergence rates are sharp.  We also discuss analogies with the 3D Navier-Stokes equations of fluid dynamics.
\end{abstract}

\maketitle
\thispagestyle{empty}
\section{Introduction}\label{secInt}
\noindent
The Kuramoto-Sivashinsky equation (KSE) is a captivating model for flame fronts, crystal growth, and many other phenomena.  It is both satisfying and frustrating.  In one space dimension, the model acts as a fantastic toy model: it has highly non-trivial chaotic dynamics while still being amenable to a wide range of analytical tools.  However, in higher dimensions, it has so far resisted nearly every analytical attack due to its lack of any known conserved quantity, and the basic question of global well-posedness of solutions remains open, even in two dimensions.  Moreover, the nonlinearity of the system has many similarities with the nonlinearity of the Navier-Stokes equations (NSE), making investigation of the KSE even more intriguing.  

How does one proceed in the face of such difficulty?  In the case of the NSE, at least one approach has been fruitful since at least the work of Smagorinsky in 1963 \cite{Smagorinsky1963}, where a modification of the Navier--Stokes system was proposed, resulting in a system which is both globally well-posed \cite{Ladyzhenskaya1968modifications}, and less computationally demanding to simulate.  Since then, hundreds of so-called ``turbulence models'' have arisen (see, e.g.,  \cite{Rodi2017turbulence,DuraisamyIaccarinoXiao2019turbulence} for a survey),  which typically modify the equations in some way.  It is therefore natural to ask whether such an approach might work for the 2D KSE.\footnote{Since the KSE governs the evolution of a surface, its natural space dimension is two.  Moreover, it is not clear that the 3D case for the KSE is fundamentally more difficult than the 2D case, due to the already strong dissipation.  Hence, we focus on the 2D case.}  However, one quickly realizes that approaches which work for the NSE are unlikely to work for the KSE.  Indeed, for the NSE, the problem is the growth of large gradients; more specifically, the problem is the development of large vorticity, $\bomega:=\nabla\times\bu$ (see, e.g., \cite{Beale_Kato_Majda_1984}).  This is due to the cubic nature of the vorticity equation: $\frac{d}{dt}\|\bomega\|_{L^2}^2\sim (\bomega\cdot\nabla\bu,\bomega)$.  Hence, in order to handle the NSE, one typically attempts to control the gradient of the solution, for example, by strengthening the viscosity or weakening the nonlinear term, since the nonlinear term cascades energy from large scales to small scales, intensifying the gradient.  That is, the NSE are appeased by controlling the \textit{small scales}.  On the other hand, for the KSE, the problem is the growth of large scales. This is due to the cubic nature of the energy equation: $\frac{d}{dt}\|\bu\|_{L^2}^2\sim (\bu\cdot\nabla\bu,\bu)$.  In the 1D case (and in the NSE case), this latter term vanishes, but not in the 2D KSE case.  Moreover, controlling the small scales is not a major problem, as the KSE has a fourth-order diffusion term, strongly curbing the growth of gradients.  Therefore, the problem for the KSE appears to be the exact opposite of the problem for the NSE.  That is, the KSE is appeased by controlling the \textit{large scales}.  Hence, the standard approaches that work for the NSE are unlikely to be of use for the KSE (see \cite{Kostianko_Titi_Zelik_2018} for investigations of this notion in the 1D case), and new approaches for handling the KSE are required.  The purpose of the present work is to propose and investigate one such approach.

In \cite{Larios_Yamazaki_2020_rKSE} the authors study a modification of the 2D KSE that they call the ``reduced KSE'' (r-KSE) with an adjustment made to the linear term in one component. This system admits a maximum principle, allowing for a proof of globally well-posedness.  Moreover, simulations in \cite{Larios_Yamazaki_2020_rKSE} indicate that the dynamics of the r-KSE are arguably qualitatively similar to KSE.  However, r-KSE suffers from the drawback that there is no clear way to see solutions of the r-KSE converge to solutions of the KSE, as any introduction of a ``turning'' parameter interpolating between the r-KSE and the KSE would immediately violate the maximum principle. In contrast, the model introduced in this present paper allows for such a  parameter $\epsilon>0$, which we call the ``calming parameter.''  In particular, by adjusting the nonlinear term in the \eqref{KSE}, we create a globally well-posed PDE that approximates solutions to the 2D KSE to arbitrary precision, at least on the time interval of existence and uniqueness of solutions to the KSE. Perhaps surprisingly, our construction does not require the use of a maximum principle, nor does it add artificial viscosity to the system.

The $N$-dimensional Kuramoto-Sivashinsky equation (KSE) is given in scalar form by
\begin{align}
\label{KSE_scalar}
\dt{\phi}+\tfrac{1}{2}|\nabla\phi|^2 + \triangle\phi + \triangle^2\phi &= 0.
\end{align}
with periodic boundary conditions on a domain $[0,L]^N$.
By setting $\bu=\nabla\phi$ in \eqref{KSE_scalar}, one formally\footnote{\label{not_grad} We do not claim that $\grad\phi$ is a unique solution to \eqref{KSE} when $\phi$ is a solution to \eqref{KSE_scalar}. We only observe that one can formally obtain the set of equations \eqref{KSE} by taking the gradient of equation \eqref{KSE_scalar}.  In particular, it may be the case that there exist solutions to \eqref{KSE} that are not gradients of solutions to \eqref{KSE_scalar}, or of any other function.}
obtains the vector formulation of KSE:
\begin{align}
\label{KSE}
\dt{\bu}+(\bu\cdot\nabla)\bu + \triangle\bu + \triangle^2\bu &= \mathbf{0},
\end{align}


These equations were originally proposed in the 1970's by Kuramoto and Tsuzuki in the studies of crystal growth \cite{Kuramoto_Tsuzuki_1975,Kuramoto_Tsuzuki_1976} as well as by Sivashinsky in the study of flame-front instabilities \cite{Sivashinsky_1977} (see also \cite{Sivashinsky_1980_stoichiometry}). It has since found many other applications in the sciences, such as describing the flow of fluid down inclined planes  \cite{sivashinsky1980vertical}, and has shown to be a generic feature of many physical phenomena involving bifurcations \cite{misbah1994secondary}. 

The study of the $1$D equation since its conception has been fruitful, and the equation has been shown to be rich with interesting dynamics. It is globally well-posed  \cite{Nicolaenko_Scheurer_1984,Tadmor_1986}, solutions continue to exhibit chaotic dynamics at large times (see, e.g., \cite{Collet_Eckmann_Epstein_Stubbe_1993_Analyticity,Hyman_Nicolaenko_1986,Michelson_Sivashinsky_1977_numerical,Nicolaenko_Scheurer_Temam_1985,Papageorgiou_Smyrlis_1991_KSE_chaos}),
and a large body of work has been published on quantitative results pertaining to the global attractor (see, e.g., \cite{Collet_Eckmann_Epstein_Stubbe_1993_Attractor,Collet_Eckmann_Epstein_Stubbe_1993_Analyticity,Constantin_Foias_Nicolaenko_Temam_1989,Constantin_Foias_Nicolaenko_Temam_1989_IM_Book,Foias_Nicolaenko_Sell_Temam_1985,Foias_Sell_Temam_1985,Foias_Sell_Titi_1989,Goluskin_Fantuzzi_2019,Goodman_1994,Grujic_2000_KSE,Hyman_Nicolaenko_1986,Ilyashenko_1992,Nicolaenko_Scheurer_Temam_1986,Robinson_2001,Tadmor_1986,Temam_1997_IDDS,Kostianko_Titi_Zelik_2018}).


There are far fewer results on the KSE in the $2$D case. Global well-posedness for sufficiently small initial data was first shown in \cite{Sell_Taboada_1992} on a domain $[0, 2\pi] \times [0, 2\pi \epsilon]$ with $\epsilon>0$ sufficiently small. This result was improved upon in \cite{Molinet_2000} by showing global existence on a domain $[0,L_1] \times [0,L_2]$ with $L_2 \leq C L_1^q$ for some particular $q$. Later works continued to improve on the sharpness of this bound (see, e.g.,  \cite{Molinet_2000_2D_BS, Massatt_2022_DCDSB, Benachour_Kukavica_Rusin_Ziane_2014_JDDE_2DKSE,Kukavica_Massatt_2023_JDDE} and references therein).
Other works employ control of the domain size as a means to control the instability in Fourier modes. It was shown in \cite{Ambrose_Mazzucato_2018} that for small enough domains (on which no growing Fourier modes are present in the linear terms), global existence holds when the initial data is sufficiently small in a certain Wiener algebra. This result was then extended in \cite{Ambrose_Mazzucato_2021} to domains in which there is one linearly growing mode in each direction. Further studies have investigated modified equations \cite{Boling_Fengqiu_1993_JPDE,Feng_Mazzucato_2021,CotiZelati_Dolce_Feng_Mazzucato_2021,Larios_Yamazaki_2020_rKSE,Tomlin_Kalogirou_Papageorgiou_2018,Molinet_2000_2D_BS} or have looked at the equations with different boundary conditions \cite{Galaktionov_Mitidieri_Pokhozhaev_2008,Larios_Titi_2015_BC_Blowup,Pokhozhaev_2008}. For other results on the case $N>1$, see also \cite{Biswas_Swanson_2007_KSE_Rn, Cao_Titi_2006_KSE, Larios_Rahman_Yamazaki_2021_JNLS_KSE_PS}.

The intent of the present work is to propose a modification of the $2$D KSE in vector form which is globally well-posed for any size of domain or initial data. To do this, we make use of what we call an \textit{algebraic calming function} or simply a \textit{calming function}\footnote{Such a function is simply a bounded smooth truncation function, but we call it a ``calming'' function due to the way it is used in the nonlinearity to suppress the algebraic growth of the nonlinear term.  We do not call it a ``regularization,'' since we reserve this term for techniques which smooth the equations by modifying derivative operators.} which constrains the advective velocity of the solution. 

We propose the following modification of system \eqref{KSE}.
\begin{subequations}\label{cKSE}
\begin{align}
\label{cKSE_EQ}
\dt{\bu}+(\boldeta^\epsilon(\bu)\cdot\nabla)\bu + \triangle\bu + \triangle^2\bu &= \mathbf{0},
\\
\label{cKSE_IC}
\bu(\bx,0)&=\bu_0(\bx),
\end{align}
\end{subequations}
with L-periodic\footnote{Note: One could easily consider rectangular non-square periodic domains, say $\nR^2/((L_1\nZ)\times(L_2\nZ))$ as well with slight modification of the techniques we use here.  For the sake of keeping the discussion focused, we do not pursue such matters here.} boundary conditions on the $2$-dimensional periodic torus $\nT^2:=\nR^2/(L\nZ)^2 = [0,L]^2$ for some $L>0$.  
We call $\epsilon>0$ the \textit{calming parameter}, and $\boldeta^\epsilon$ the \emph{calming function}. We require that $\bee$ satisfies the conditions described in \ref{eta_def}.  
For the sake of concreteness, we consider several example choices for $\boldeta^\epsilon$; namely

\begin{minipage}{0.48\textwidth}
\begin{align} \label{eta_choices}
\boldeta^\epsilon(\bx) = 
\begin{cases}
  \boldeta^\epsilon_1(\bx) &:= \frac{\bx}{1+\epsilon|\bx|}\\
  \boldeta^\epsilon_2(\bx) &:= \frac{\bx}{1+\epsilon^2|\bx|^2}\\
  \boldeta^\epsilon_3(\bx) &:= \frac{1}{\epsilon}\arctan(\epsilon\bx)
\end{cases}
\end{align}
\end{minipage}
\begin{minipage}{0.48\textwidth}
\begin{tikzpicture}[baseline=(theAxis.west)]
\begin{axis}[
  name=theAxis,
  axis equal image,
  hide axis,
  ymin=-8,ymax=10,
  domain=-13:13,
  samples=50,
  no markers,
  cycle list name=exotic,
  width=1\textwidth,
  xticklabels ={},
  yticklabels ={},
]
  \draw[-, thick] (-20,0)--(20,0); 
  \draw[-, thick] (0,-10)--(0,10); 
\addplot[thick] {x} node[left,pos=0.83]{$\,\,y=x$};
\addplot {x/(1+0.2*abs(x))} node[right,pos=0.97]{$\eta_1$};
\addplot {x/(1+0.04*x*x)} node[right,pos=0.97]{$\eta_2$};
\addplot {0.017453292519943295*atan(0.2*x)/0.2} node[right,pos=0.97]{$\eta_3$};
\end{axis}
\end{tikzpicture}
\end{minipage}

\noindent
where the arctangent acts componentwise; that is, for a given vector $\bz=\binom{z_1}{z_2}$, we denote $\arctan(\bz)=\binom{\arctan(z_1)}{\arctan(z_2)}$. 


\begin{remark}
    For a given choice of $\bee_i$, $i=1,2,$ or $3$, $\epsilon$ can be thought of as a parameter which limits the velocity scale advecting the flow. 
    However, it is not immediately clear that velocity scales are limited in the same way between choices of $\bee_i$ for a fixed $\epsilon$.
    One could imagine trying to find a more meaningful comparison by rescaling the $\bee_i$ to have the same supremum. However, in doing so, $\bee_i(\bx)$ is no longer a good pointwise approximation for $\bx$. Therefore, it may not be meaningful to compare different convergence rates (or errors) between different types for fixed $\epsilon$. Yet, for the sake of convenience, we plot all error curves, etc., on the same plot. Moreover, the goal of the present work is not to compare different types of $\bee_i$ but rather to exhibit the robustness of this approach to different choices of calming function $\bee_i$.
    \end{remark}

\begin{remark}
We see no major difficulty in extending our work to the case of physical boundary conditions, i.e., $\bu\big|_{\partial\Omega}=\triangle\bu\big|_{\partial\Omega}=\mathbf{0}$.  However, for the sake of simplicity, we only consider periodic boundary conditions in the present work.
\end{remark}
    

Subsection \ref{subsec_Main_Results} lists our main definitions and results, and Section \ref{secPre} lists some preliminaries.  Section \ref{sec_cKSE_well_posedness} contains a proof of global well-posedness, which is mostly standard Galerkin methods, but with some subtle differences due to the non-polynomial form of the nonlinearity.  Section \ref{sec_smooth_ini_data} contains a proof of higher-order (but not arbitrary order) regularity of solutions.  Section \ref{Convergence} contains a proof of convergence of solutions of the calmed equation to solutions to the original KSE as the calming parameter $\epsilon\rightarrow 0$.  The proof here is not so straight-forward due to issues with commutator terms involving the calming function. As we will see, these issues are circumvented by taking advantage of structural properties of the calming function, and then using a boot-strapping argument in time. In addition, our techniques yield an explicit convergence rate.  In Section \ref{sec_cKSE_scal} we extend our ideas to the scalar form of the KSE.  In particular, we consider the system 
\eqref{KSE_scalar},
\begin{align}
\label{cKSE_scalar}
\dt{\phi}+\tfrac{1}{2}(\boldeta^\epsilon(\nabla\phi)\cdot\nabla)\phi + \triangle\phi + \triangle^2\phi &= 0,
\\
\phi(\bx,0)&=\phi_0(\bx).
\end{align}
Section \ref{sec_Computational_Results} exhibits results from simulations and provides computational evidence that the convergence rates we obtained in Section \ref{Convergence} are sharp (at least, in terms of convergence order).  Concluding remarks are in Section \ref{sec_conclusions}.

\subsection{Main results}\label{subsec_Main_Results}

\begin{definition}\label{eta_def}
    We say $\bee: \R^2 \to \R^2$ is a \emph{calming function} if the following conditions hold:
\begin{enumerate}[label=(\roman*)]
        \item \label{eta_cond_Lip} $\bee$ is Lipschitz continuous with Lipschitz constant $1$,
        \item \label{eta_cond_bdd} For $\epsilon > 0$ fixed, $\bee$ is bounded.
\end{enumerate}
These two conditions are sufficient to show that \eqref{cKSE} is globally well-posed. In Section \ref{Convergence}, we impose this third condition to obtain convergence:
\begin{enumerate}[resume,label=(\roman*)]
        \item \label{eta_cond_conv} There exists $C > 0$, $\alpha >0$ and $\beta > 0$ such that for any $\bx \in \R^2$,
        \begin{align}\label{pwise_conv}
        \abs{\bee(\bx) - \bx} \leq C \epsilon^\alpha \abs{\bx}^\beta.
        \end{align}
\end{enumerate}
\end{definition}

\begin{proposition}\label{eta_cond_prop}
    Consider $\bee_i$ as described in \eqref{eta_choices}. 
    Then $\bee_i$ satisfies Conditions \ref{eta_cond_Lip}, \ref{eta_cond_bdd}, and \ref{eta_cond_conv} of Definition \ref{eta_def} for each $i =1,2,3$. In particular, the following explicit bounds hold for $\epsilon>0$.
    \begin{enumerate}[label=(\roman*)]
        \item For $\bee_1$, 
        \[
        \normLp{\infty}{ \bee_1} = \frac{1}{\epsilon}
        \text{ and } \abs{\bee_1(\bx) - \bx} \leq \epsilon \abs{\bx}^2.
        \]
        \item For $\bee_2$, 
        \[
        \normLp{\infty}{ \bee_2} = \frac{1}{2\epsilon}
        \text{ and } \abs{\bee_2(\bx) - \bx} \leq \epsilon^2 \abs{\bx}^3.
        \]
        \item For $\bee_3$, 
        \[
        \normLp{\infty}{ \bee_3} = \frac{\pi}{2\epsilon}
        \text{ and } \abs{\bee_3(\bx) - \bx} \leq \epsilon^2 \abs{\bx}^3.
        \]
    \end{enumerate}
\end{proposition}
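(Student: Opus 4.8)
The plan is to treat $\bee_1,\bee_2$ together as \emph{radial} maps and $\bee_3$ as a \emph{componentwise} map, since the two structures call for slightly different tools, and to verify the three conditions of Definition \ref{eta_def} in turn. I would write $\bee_1$ and $\bee_2$ in the form $\bx\mapsto f(r)\,\bx/r$ with $r=\abs{\bx}$, where $f_1(r)=r/(1+\epsilon r)$ and $f_2(r)=r/(1+\epsilon^2 r^2)$, and observe that $\bee_3$ acts on each coordinate through the single scalar profile $s\mapsto \tfrac{1}{\epsilon}\arctan(\epsilon s)$. In every case the verification reduces to elementary single-variable calculus in $r$ (or $s$), so no PDE machinery is needed.

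For Condition \ref{eta_cond_Lip} (Lipschitz with constant $1$) I would use the fact that for a radial map $\Phi(\bx)=f(r)\,\bx/r$ the Jacobian is symmetric with eigenvalue $f'(r)$ in the radial direction $\bx/r$ and eigenvalue $f(r)/r$ in the tangential direction, so that its operator norm equals $\max\{\abs{f'(r)},\abs{f(r)/r}\}$. A direct computation gives, for $\bee_1$, $f_1'(r)=(1+\epsilon r)^{-2}$ and $f_1(r)/r=(1+\epsilon r)^{-1}$, both lying in $(0,1]$; and for $\bee_2$, $f_2(r)/r=(1+\epsilon^2r^2)^{-1}\in(0,1]$ and $f_2'(r)=(1-\epsilon^2r^2)(1+\epsilon^2r^2)^{-2}$, whose absolute value is easily seen to be at most $1$. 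Hence the operator norm is $\le 1$ away from the origin, and continuity at $r=0$ extends the bound, giving Lipschitz constant $1$. For $\bee_3$ the scalar profile has derivative $(1+\epsilon^2 s^2)^{-1}\in(0,1]$, so each coordinate is $1$-Lipschitz; squaring and summing over coordinates then yields $\abs{\bee_3(\bx)-\bee_3(\by)}\le\abs{\bx-\by}$.

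Condition \ref{eta_cond_bdd} and the explicit sup-norms follow from maximizing the relevant scalar profile: $f_1(r)$ increases monotonically to its supremum $1/\epsilon$; $f_2(r)$ attains its maximum $1/(2\epsilon)$ at $r=1/\epsilon$; and for $\bee_3$ each coordinate is bounded by $\tfrac{1}{\epsilon}\cdot\tfrac{\pi}{2}$ since $\abs{\arctan}<\pi/2$. For the convergence estimate \ref{eta_cond_conv}, the radial cases are one line of algebra: $\bee_1(\bx)-\bx = -\epsilon\abs{\bx}\bx/(1+\epsilon\abs{\bx})$ and $\bee_2(\bx)-\bx = -\epsilon^2\abs{\bx}^2\bx/(1+\epsilon^2\abs{\bx}^2)$, whence $\abs{\bee_1(\bx)-\bx}\le\epsilon\abs{\bx}^2$ and $\abs{\bee_2(\bx)-\bx}\le\epsilon^2\abs{\bx}^3$ after dropping the denominators. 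For $\bee_3$ I would use the integral remainder $\abs{\arctan s - s}=\int_0^{\abs{s}}\tfrac{u^2}{1+u^2}\,du\le\abs{s}^3/3$, apply it with $s=\epsilon x_j$ in each coordinate, and then combine the coordinates via $x_1^6+x_2^6\le(x_1^2+x_2^2)^3=\abs{\bx}^6$, which gives $\abs{\bee_3(\bx)-\bx}\le\tfrac{\epsilon^2}{3}\abs{\bx}^3\le\epsilon^2\abs{\bx}^3$.

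The only genuinely nontrivial step is the Lipschitz bound for the radial maps $\bee_1,\bee_2$: one must identify the radial and tangential singular values of the Jacobian and check that \emph{both} stay within $[-1,1]$ for all $r\ge 0$ (for $\bee_2$ the radial eigenvalue $f_2'(r)$ becomes negative, so it is its absolute value that must be controlled), and one must separately argue that the Lipschitz constant is not spoiled at the origin, where $\abs{\bx}$ fails to be differentiable. Everything else is routine. One point worth flagging is the reading of $\normLp{\infty}{\bee_3}$: with the Euclidean norm the supremum of $\abs{\bee_3(\bx)}$ is $\tfrac{\pi\sqrt{2}}{2\epsilon}$, so the stated value $\tfrac{\pi}{2\epsilon}$ should be understood as the bound on each component; in any event this affects only the explicit constant and not the boundedness required for well-posedness.
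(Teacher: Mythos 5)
Your proof is correct and is in essence the paper's own: the paper's entire proof reads ``Straightforward computations (using a Taylor series expansions in the case $i=3$) yield the result,'' and your argument simply supplies those computations in full (the radial/tangential Jacobian eigenvalue analysis for $\bee_1, \bee_2$, the componentwise reduction for $\bee_3$, and the integral remainder $\abs{\arctan s - s}\leq \abs{s}^3/3$ playing the role of the Taylor bound). Your closing flag is also well taken: under the Euclidean norm the supremum of $\abs{\bee_3(\bx)}$ is $\sqrt{2}\,\pi/(2\epsilon)$, so the paper's stated value $\normLp{\infty}{\bee_3}=\pi/(2\epsilon)$ must be read as a componentwise bound --- a discrepancy in the explicit constant only, with no effect on the boundedness needed for well-posedness or on the convergence rates.
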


 \begin{proof}
     Straightforward computations (using a Taylor series expansions in the case $i=3$) yield the result.
 \end{proof}

\begin{definition}\label{cKSE_vec_weakDef}
Let $\bu_0\in L^2(\nT^2)$ and let $T>0$.  We say that $\bu$ is a \textit{weak solution} to calmed KSE \eqref{cKSE} on the interval $[0,T]$ if $\bu\in L^2([0,T]; H^2(\nT^2))\cap C([0,T]; L^2(\nT^2))$, $\partial_t\bu\in L^2(0,T; H^{-2}(\nT^2))$, and $\bu$ satisfies \eqref{cKSE_EQ} in the sense of $L^2(0,T; H^{-2}(\nT^2))$ and satisfies \eqref{cKSE_IC} in the sense of $C([0,T]; L^2(\nT^2))$.
\end{definition}

\begin{theorem} [Global Well-Posedness]
 Let $\bu_0\in L^2(\nT^2)$, let $T>0$ and fix $\epsilon>0$. Suppose $\bee$ is a calming function which satisfies Conditions \ref{eta_cond_Lip} and \ref{eta_cond_bdd} of Definition \ref{eta_def}. Then weak solutions to \eqref{cKSE} on $[0,T]$ exist, are unique, and depend continuously on the initial data in $L^\infty(0,T; L^2(\nT^2)) \cap L^2(0,T; H^2(\nT^2))$.
\end{theorem}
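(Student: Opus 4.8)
The plan is to construct a solution by a Galerkin scheme, derive uniform \emph{a priori} bounds that crucially exploit the boundedness of $\bee$, pass to the limit by compactness, and finally prove uniqueness and continuous dependence via an energy estimate for the difference of two solutions. First I would project onto the span of the first $m$ Fourier modes by $P_m$ and solve the finite-dimensional system $\dt\bu_m + P_m[(\bee(\bu_m)\cdot\nabla)\bu_m] + \triangle\bu_m + \triangle^2\bu_m = \mathbf 0$, $\bu_m(0)=P_m\bu_0$. Since $\bee$ is Lipschitz (Condition \ref{eta_cond_Lip}) the nonlinearity is locally Lipschitz, so Picard--Lindel\"of gives a unique local solution, which the following bounds extend to $[0,T]$. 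Pairing with $\bu_m$ in $L^2$ gives
\[
\tfrac12\ddt\norm{\bu_m}_{L^2}^2 + \norm{\triangle\bu_m}_{L^2}^2 = \norm{\nabla\bu_m}_{L^2}^2 - \big((\bee(\bu_m)\cdot\nabla)\bu_m,\bu_m\big).
\]
The destabilizing anti-diffusion term $\norm{\nabla\bu_m}_{L^2}^2$ is absorbed into the fourth-order dissipation by the interpolation $\norm{\nabla\bu_m}_{L^2}^2\le\tfrac14\norm{\triangle\bu_m}_{L^2}^2 + C\norm{\bu_m}_{L^2}^2$. The essential observation is that, because $\bee$ is \emph{bounded} (Condition \ref{eta_cond_bdd}) with $M:=\norm{\bee}_{L^\infty}<\infty$, the nonlinear term obeys $|((\bee(\bu_m)\cdot\nabla)\bu_m,\bu_m)|\le M\norm{\nabla\bu_m}_{L^2}\norm{\bu_m}_{L^2}$, which after Young and another interpolation is likewise absorbed up to a term $C(M)\norm{\bu_m}_{L^2}^2$. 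Gr\"onwall then yields uniform bounds for $\bu_m$ in $L^\infty(0,T;L^2)\cap L^2(0,T;H^2)$, and reading off $\dt\bu_m$ from the equation gives a uniform bound in $L^2(0,T;H^{-2})$.

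Next I would invoke the Aubin--Lions--Simon lemma: these bounds together with the compact embedding $H^2\hookrightarrow\hookrightarrow L^2\hookrightarrow H^{-2}$ produce a subsequence converging strongly in $L^2(0,T;L^2)$ (hence a.e.), weakly in $L^2(0,T;H^2)$, weak-$*$ in $L^\infty(0,T;L^2)$, with $\dt\bu_m\rightharpoonup\dt\bu$ in $L^2(0,T;H^{-2})$. The linear terms pass to the limit by weak convergence. The nonlinear term is where the non-polynomial structure demands care: writing $\bee(\bu_m)\cdot\nabla\bu_m = \bee(\bu)\cdot\nabla\bu_m + (\bee(\bu_m)-\bee(\bu))\cdot\nabla\bu_m$, the first piece converges since $\bee(\bu)$ is a fixed $L^\infty$ function (boundedness is essential here, as $\bu\in L^2$ alone need not lie in $L^\infty$ in two dimensions) tested against the weakly convergent $\nabla\bu_m$, while the second is controlled by the Lipschitz bound $|\bee(\bu_m)-\bee(\bu)|\le|\bu_m-\bu|$ and strong $L^2$ convergence. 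Combined with $\bu\in C([0,T];L^2)$ (from the Lions--Magenes interpolation of $L^2H^2$ and $\dt\bu\in L^2H^{-2}$) and $\bu_m(0)=P_m\bu_0\to\bu_0$, this shows $\bu$ is a weak solution attaining the initial data.

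For uniqueness and continuous dependence I would take two weak solutions $\bu,\bv$ with data $\bu_0,\bv_0$, set $\bw=\bu-\bv$, and estimate $\norm{\bw}_{L^2}^2$ (the pairing $\la\dt\bw,\bw\ra=\tfrac12\ddt\norm{\bw}_{L^2}^2$ being justified by the regularity in Definition \ref{cKSE_vec_weakDef}). The nonlinear difference splits as $(\bee(\bu)\cdot\nabla)\bw + ((\bee(\bu)-\bee(\bv))\cdot\nabla)\bv$; the first term is treated exactly as in the \emph{a priori} estimate using boundedness, while the second is the crux. Using the Lipschitz bound it is dominated by $\int_{\nT^2}|\bw|^2|\nabla\bv|\,d\bx$, which I would control by the 2D Ladyzhenskaya inequality $\norm{\bw}_{L^4}^2\le C\norm{\bw}_{L^2}\norm{\nabla\bw}_{L^2}$ followed by Young's inequality, absorbing $\norm{\triangle\bw}_{L^2}^2$ and leaving $C(1+\norm{\nabla\bv}_{L^2}^2)\norm{\bw}_{L^2}^2$, where $\norm{\nabla\bv}_{L^2}^2\in L^1(0,T)$. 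Gr\"onwall then gives $\norm{\bw(t)}_{L^2}^2\le\norm{\bu_0-\bv_0}_{L^2}^2\exp(\int_0^t C(1+\norm{\nabla\bv}_{L^2}^2)\,ds)$, which yields uniqueness and Lipschitz continuous dependence in $L^\infty(0,T;L^2)$; integrating the differential inequality upgrades this to $L^2(0,T;H^2)$.

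I expect the main obstacle to be entirely a consequence of the non-polynomial nonlinearity, and it appears in two places. First, closing the \emph{a priori} energy estimate is possible \emph{only} because calming renders the advecting velocity $\bee(\bu)$ uniformly bounded; this is precisely the mechanism that suppresses the large-scale growth obstructing the bare 2D KSE, and it replaces the cancellation $(\bu\cdot\nabla\bu,\bu)=0$ that is unavailable in two dimensions. Second, in both the limit passage and the uniqueness estimate the term involving $\bee(\bu)-\bee(\bv)$ (respectively $\bee(\bu_m)-\bee(\bu)$) cannot be dispatched by algebraic cancellation and must instead be handled by the Lipschitz property, which in the uniqueness argument forces the use of a genuinely two-dimensional interpolation inequality to close.
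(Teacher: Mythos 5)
Your proposal is correct and takes essentially the same route as the paper's proof: Galerkin approximation with Picard--Lindel\"of via the Lipschitz property of $\bee$, energy estimates closed by the boundedness $\normLp{\infty}{\bee}<\infty$, Aubin--Lions compactness with the nonlinear limit split into a Lipschitz-difference piece (handled by strong convergence) plus a frozen-coefficient piece $(\bee(\bu)\cdot\grad)(\bu^m-\bu)$ (handled by weak convergence), and uniqueness/continuous dependence by the same Ladyzhenskaya-based energy argument for the difference of two solutions. The paper differs only in routine details you left implicit, namely disposing of the projection remainder $Q_m\lp(\bee(\bu^m)\cdot\grad)\bu^m\rp$ via the estimate $\normLp{2}{Q_m\bw}\leq m^{-2}\normHs{2}{\bw}$, and verifying $\bu(0)=\bu_0$ through an integration-by-parts-in-time argument rather than directly from convergence of the approximants.
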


\begin{theorem} [Regularity]
Suppose that $\bee$ is is calming function which satisfies Conditions \ref{eta_cond_Lip}, and \ref{eta_cond_bdd} of \ref{eta_def}.
   Let $m\in\{1,2\}$, and suppose that $\bu$ is a weak solution to \eqref{cKSE} on $[0,T]$ for some $T>0$. If $\bu_0 \in H^m(\T)$, then $\bu \in L^\infty(0,T; H^m(\nT^2))\cap L^2(0,T;H^{m+2}(\T))$.
\end{theorem}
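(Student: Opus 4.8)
The plan is to obtain these bounds from a priori energy estimates carried out on the Galerkin approximations $\bu_n$ constructed in the proof of the Global Well-Posedness theorem, and then to pass to the limit. Since the spectral projection satisfies $\|P_n\bu_0\|_{H^m}\leq\|\bu_0\|_{H^m}$, every bound uniform in $n$ transfers to the limit by weak/weak-$*$ lower semicontinuity, and by the uniqueness already established the limit coincides with the given weak solution $\bu$. Throughout I write $M_\epsilon:=\|\boldeta^\epsilon\|_{L^\infty}$, which is finite by Condition \ref{eta_cond_bdd}, and I use freely the Fourier-side interpolation bounds $\|\triangle\bu\|_{L^2}\leq\|\nabla\bu\|_{L^2}^{1/2}\|\nabla\triangle\bu\|_{L^2}^{1/2}$ and $\|\nabla\triangle\bu\|_{L^2}\leq\|\triangle\bu\|_{L^2}^{1/2}\|\triangle^2\bu\|_{L^2}^{1/2}$ valid on $\nT^2$.

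For $m=1$, I would test the equation with $-\triangle\bu_n$. By periodicity, the biharmonic term produces the good dissipation $\|\nabla\triangle\bu_n\|_{L^2}^2$, the anti-dissipative second-order term contributes $+\|\triangle\bu_n\|_{L^2}^2$, and the time term gives $\tfrac12\frac{d}{dt}\|\nabla\bu_n\|_{L^2}^2$. The key point for the nonlinear term is that \emph{no} integration by parts onto $\boldeta^\epsilon$ is required: keeping all derivatives on $\bu_n$ and using the pointwise bound $|\boldeta^\epsilon(\bu_n)|\leq M_\epsilon$ yields
\[
\bigl|((\boldeta^\epsilon(\bu_n)\cdot\nabla)\bu_n,\,\triangle\bu_n)\bigr|\leq M_\epsilon\|\nabla\bu_n\|_{L^2}\|\triangle\bu_n\|_{L^2}.
\]
Interpolation and Young's inequality then absorb both $\|\triangle\bu_n\|_{L^2}^2$ and this nonlinear contribution into $\tfrac12\|\nabla\triangle\bu_n\|_{L^2}^2$, leaving
\[
\frac{d}{dt}\|\nabla\bu_n\|_{L^2}^2+\|\nabla\triangle\bu_n\|_{L^2}^2\leq C\bigl(1+M_\epsilon^{4/3}\bigr)\|\nabla\bu_n\|_{L^2}^2.
\]
Grönwall bounds $\|\nabla\bu_n\|_{L^2}^2$ on $[0,T]$ and, after integration, $\int_0^T\|\nabla\triangle\bu_n\|_{L^2}^2\,dt$; combined with the weak-solution bounds in $L^\infty L^2\cap L^2 H^2$ this gives $\bu\in L^\infty(0,T;H^1)\cap L^2(0,T;H^3)$.

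For $m=2$, I test with $\triangle^2\bu_n$. Now the biharmonic term gives the good dissipation $\|\triangle^2\bu_n\|_{L^2}^2$, the second-order term contributes the subcritical $+\|\nabla\triangle\bu_n\|_{L^2}^2$, and the nonlinear term is again estimated without integrating by parts by $M_\epsilon\|\nabla\bu_n\|_{L^2}\|\triangle^2\bu_n\|_{L^2}$. Interpolating $\dot H^3$ between $\dot H^2$ and $\dot H^4$ absorbs the order-three term, while Young's inequality on the nonlinear term leaves a remainder $CM_\epsilon^2\|\nabla\bu_n\|_{L^2}^2$; since $\|\nabla\bu_n\|_{L^2}$ is already controlled by the $m=1$ estimate (here using $\bu_0\in H^2\subset H^1$), Grönwall closes the estimate for $\|\triangle\bu_n\|_{L^2}^2$ and hence for $\int_0^T\|\triangle^2\bu_n\|_{L^2}^2\,dt$, giving $\bu\in L^\infty(0,T;H^2)\cap L^2(0,T;H^4)$.

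The point requiring the most care is handling the nonlinearity under only the Lipschitz and boundedness hypotheses on $\boldeta^\epsilon$, with no control on its higher derivatives. The resolution, common to both cases, is to never transfer a derivative onto $\boldeta^\epsilon(\bu_n)$: pairing $(\boldeta^\epsilon(\bu_n)\cdot\nabla)\bu_n$ directly against the top-order test function and invoking $\|\boldeta^\epsilon(\bu_n)\|_{L^\infty}\leq M_\epsilon$ leaves only $\|\nabla\bu_n\|_{L^2}$ against the dissipation, which close by interpolation and Young. Had one instead integrated by parts, terms such as $\triangle\boldeta^\epsilon(\bu_n)$ would appear, which are uncontrolled for a merely Lipschitz calming function. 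A secondary, routine matter is the rigorous justification at the Galerkin level: because $\triangle\bu_n,\triangle^2\bu_n\in V_n$, the projection in the nonlinear term may be dropped inside each inner product, so the estimates above hold literally for $\bu_n$ and the uniform bounds pass to the limit as described.
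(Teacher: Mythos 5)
Your proposal is correct and follows essentially the same route as the paper's proof: energy estimates with the multipliers $-\triangle\bu$ and $\triangle^2\bu$, arranged so that no derivative ever falls on $\boldeta^\epsilon$, then interpolation, Young's inequality, and Gr\"onwall (the paper argues formally and notes the Galerkin justification you spell out). The only minor deviation is the $m=2$ nonlinear term, which you bound by $M_\epsilon\|\nabla\bu_n\|_{L^2}\|\triangle^2\bu_n\|_{L^2}$ and close by cascading from the $m=1$ estimate, whereas the paper bounds it by $\|\boldeta^\epsilon(\bu)\|_{L^4}\|\nabla\bu\|_{L^4}\|\triangle^2\bu\|_{L^2}$ and closes via Ladyzhenskaya using only the weak-solution bound in $L^\infty(0,T;L^2)$; both are valid.
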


\begin{theorem} [Convergence]
Given $\bu_0 \in L^2(\T)$, let		
    \begin{align}
			\bu \in C([0, T]; L^2(\T)) \cap L^2(0, T; H^2(\T)).
            \label{rr}
		\end{align}
    be the corresponding weak solution of \eqref{KSE} with maximal time of existence and uniqueness $T^*>0$ and with $T\in(0,T^*)$.
    Suppose $\bee$ satisfies Conditions \ref{eta_cond_Lip} and \ref{eta_cond_bdd} of Definition \ref{eta_def}. Furthermore, suppose $\bee$ satisfies Condition \ref{eta_cond_conv}, so that \eqref{pwise_conv} holds for some fixed $C, \alpha > 0$ and any $\beta \in [0,3]$.
    Let $\bu^\epsilon$ be the corresponding weak solution of \eqref{cKSE} with calming function $\bee$ and initial data $\bu_0$. 
    Then for any $\epsilon>0$, it holds that 
    \begin{align*}
        \| \bu^\epsilon - \bu\|_{L^\infty(0,T;L^2)} & \leq 
        K \epsilon^\alpha, \\
        \| \bu^\epsilon - \bu\|_{L^2(0,T;H^2)} & \leq 
        K' \epsilon^\alpha, 
    \end{align*}
    where $K, K' >0$ depend on $T$, $\beta$, and various norms of $\bu$, but not on $\epsilon$ or $\alpha$.
\end{theorem}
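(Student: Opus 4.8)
The plan is to estimate the difference $\bw := \bu^\epsilon - \bu$ by a direct $L^2$ energy method, exploiting the crucial fact that $\bw$ starts at zero: $\bw(0) = \bu_0 - \bu_0 = \mathbf{0}$. Subtracting \eqref{KSE} from \eqref{cKSE_EQ}, and adding and subtracting $(\bee(\bu^\epsilon)\cdot\nabla)\bu$ and $(\bee(\bu)\cdot\nabla)\bu$, one writes the nonlinear defect as $N_1 + N_2 + N_3$, where $N_1 = (\bee(\bu^\epsilon)\cdot\nabla)\bw$, $N_2 = ((\bee(\bu^\epsilon)-\bee(\bu))\cdot\nabla)\bu$, and $N_3 = ((\bee(\bu)-\bu)\cdot\nabla)\bu$. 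Pairing the difference equation with $\bw$ in $L^2(\T)$ and integrating by parts in the linear terms gives
\begin{align*}
\tfrac12\ddt\|\bw\|_{L^2}^2 + \|\triangle\bw\|_{L^2}^2 = \|\nabla\bw\|_{L^2}^2 - \ip{N_1+N_2+N_3}{\bw}.
\end{align*}
The destabilizing term is absorbed by interpolation, $\|\nabla\bw\|_{L^2}^2\le\|\bw\|_{L^2}\|\triangle\bw\|_{L^2}\le\tfrac14\|\triangle\bw\|_{L^2}^2+\|\bw\|_{L^2}^2$, moving the top-order piece into the dissipation. The goal is then to bound each $N_i$ contribution by an absorbable multiple of $\|\triangle\bw\|_{L^2}^2$, plus a Gr\"onwall term $g(t)\|\bw\|_{L^2}^2$, plus an $\epsilon$-source $C\epsilon^{2\alpha}h(t)$, with $g,h\in L^1(0,T)$.

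For $N_3$, Condition \ref{eta_cond_conv} gives $|\bee(\bu)-\bu|\le C\epsilon^\alpha|\bu|^\beta$, so $|\ip{N_3}{\bw}|\le C\epsilon^\alpha\int|\bu|^\beta|\nabla\bu||\bw|$; distributing norms with H\"older and the $2$D Gagliardo--Nirenberg/Ladyzhenskaya inequalities and applying Young's inequality produces $\tfrac14\epsilon^{2\alpha}h(t)+\tfrac14\|\bw\|_{L^2}^2$. Here the restriction $\beta\le 3$ is precisely what guarantees that the resulting time-weight $h$, built from norms of $\bu$, is integrable given only $\bu\in L^\infty(0,T;L^2)\cap L^2(0,T;H^2)$ (equivalently $\bu\in L^4(0,T;H^1)$ with $\|\bu\|_{L^\infty}\in L^4(0,T)$). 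For $N_2$, the Lipschitz Condition \ref{eta_cond_Lip} gives $|\bee(\bu^\epsilon)-\bee(\bu)|\le|\bw|$, hence $|\ip{N_2}{\bw}|\le\int|\nabla\bu||\bw|^2\le C\|\nabla\bu\|_{L^2}\|\bw\|_{L^2}^{3/2}\|\triangle\bw\|_{L^2}^{1/2}$, and Young's inequality yields $\delta\|\triangle\bw\|_{L^2}^2+C\|\nabla\bu\|_{L^2}^{4/3}\|\bw\|_{L^2}^2$, whose coefficient is in $L^3(0,T)\subset L^1(0,T)$.

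The crux is $N_1$, and I expect this to be the main obstacle. Since $\bee(\bu^\epsilon)$ is not divergence free, integrating by parts gives $\ip{N_1}{\bw}=-\tfrac12\int(\nabla\cdot\bee(\bu^\epsilon))|\bw|^2$, and the structural fact that $\bee$ is Lipschitz with constant $1$ (Condition \ref{eta_cond_Lip}) yields the key pointwise bound $|\nabla\cdot\bee(\bu^\epsilon)|\le C|\nabla\bu^\epsilon|$ with $C$ independent of $\epsilon$. Thus $|\ip{N_1}{\bw}|\le C\int|\nabla\bu^\epsilon||\bw|^2$, which has the same form as $N_2$ but with $\nabla\bu^\epsilon$ in place of $\nabla\bu$. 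The difficulty is that a bound on $\|\nabla\bu^\epsilon\|_{L^2}$ uniform in $\epsilon$ is not free: the energy estimate for $\bu^\epsilon$ alone closes only as a Riccati-type inequality, which a priori could blow up before $T$. This is exactly where the boot-strapping in time enters. Writing $\nabla\bu^\epsilon=\nabla\bu+\nabla\bw$, the $\nabla\bu$ piece is handled as in $N_2$, while the $\nabla\bw$ piece gives the genuinely cubic term $\int|\nabla\bw||\bw|^2\le C\|\bw\|_{L^2}^2\|\triangle\bw\|_{L^2}\le\delta\|\triangle\bw\|_{L^2}^2+C\|\bw\|_{L^2}^2\,\|\bw\|_{L^2}^2$. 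I would then run a continuation argument: define $T_\epsilon$ to be the maximal time on which $\|\bw\|_{L^\infty(0,t;L^2)}$ stays below a fixed threshold; on $[0,T_\epsilon]$ the factor $\|\bw\|_{L^2}^2$ multiplying the quartic term is small and is absorbed into the linear Gr\"onwall term, so Gr\"onwall (started from $\bw(0)=\mathbf{0}$) gives $\|\bw\|_{L^\infty(0,t;L^2)}^2\le C\epsilon^{2\alpha}$. For $\epsilon$ small this strictly beats the threshold, forcing $T_\epsilon=T$ and propagating the bound to all of $[0,T]$ (the range of large $\epsilon$ being absorbed into $K$ by a crude bound).

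Finally, collecting everything produces a differential inequality $\ddt\|\bw\|_{L^2}^2+\tfrac12\|\triangle\bw\|_{L^2}^2\le g(t)\|\bw\|_{L^2}^2+C\epsilon^{2\alpha}h(t)$ with $g,h\in L^1(0,T)$; Gr\"onwall together with $\bw(0)=\mathbf{0}$ gives $\|\bw\|_{L^\infty(0,T;L^2)}^2\le K^2\epsilon^{2\alpha}$, the first estimate. Integrating the same inequality in time controls $\int_0^T\|\triangle\bw\|_{L^2}^2\,dt\le C\epsilon^{2\alpha}$; combined with the $L^\infty L^2$ bound and the equivalence $\|\cdot\|_{H^2}^2\approx\|\cdot\|_{L^2}^2+\|\triangle\cdot\|_{L^2}^2$ on $\T$, this yields $\|\bw\|_{L^2(0,T;H^2)}\le K'\epsilon^\alpha$. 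Thus both bounds follow from a single energy estimate, with no separate higher-order argument needed, and $K,K'$ depend only on $T$, $\beta$, and norms of $\bu$ through the weights $g,h$.
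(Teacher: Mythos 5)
Your overall strategy --- an $L^2$ energy estimate for $\bw=\bu^\epsilon-\bu$, splitting the nonlinear defect into a transport part, a Lipschitz-difference part, and an $\epsilon$-source, a continuation/bootstrap argument to tame the term that is cubic in $\bw$, and then integrating the same inequality to get the $L^2(0,T;H^2)$ bound --- is essentially the paper's proof. Your treatment of the transport term $N_1$ via $\nabla\cdot\bee(\bu^\epsilon)$ (Rademacher plus the chain rule for Lipschitz compositions) is a harmless variant of the paper's purely algebraic splitting $(\bee(\bu^\epsilon)\cdot\nabla)\bw=((\bee(\bu^\epsilon)-\bee(\bu))\cdot\nabla)\bw+(\bee(\bu)\cdot\nabla)\bw$, which arrives at the same cubic term $\int|\bw|^2|\grad\bw|$ without ever differentiating $\bee$.

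There is, however, a genuine gap in your handling of the $\epsilon$-source $N_3$. You bound $\abs{\ip{N_3}{\bw}}\leq C\epsilon^\alpha\Phi(t)\normLp{2}{\bw}$ with $\Phi(t)\sim\normLp{\infty}{\bu}^\beta\normLp{2}{\grad\bu}$, and then apply Young's inequality to replace this by $C\epsilon^{2\alpha}\Phi(t)^2+\tfrac14\normLp{2}{\bw}^2$, claiming that $h=\Phi^2\in L^1(0,T)$ for all $\beta\in[0,3]$. That claim fails at the top of the range: by Agmon and interpolation the best available bound is $\Phi\lesssim\normLp{2}{\bu}^{(\beta+1)/2}\normHs{2}{\bu}^{(\beta+1)/2}$, so for $\beta=3$ one has $\Phi\sim\normHs{2}{\bu}^{2}$, which is only \emph{borderline} $L^1$ in time under the assumed regularity $\bu\in L^\infty(0,T;L^2)\cap L^2(0,T;H^2)$; hence $\Phi^2\sim\normHs{2}{\bu}^{4}$ is not integrable, and no redistribution of the spatial norms repairs this (the sharpest variants still require roughly $\beta\leq 3/2$). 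Since $\beta=3$ is exactly the exponent of the paper's model calming functions $\bee_2$ and $\bee_3$, your proof as written fails precisely in the main cases of interest; and the crude alternative of using the ansatz $\normLp{2}{\bw}\leq1$ to bound the source by $\epsilon^\alpha\Phi$ would only yield the rate $\epsilon^{\alpha/2}$. The fix is the paper's maneuver: under the bootstrap ansatz, do \emph{not} square the source. Instead use $\frac{d}{dt}\normLp{2}{\bw}^2=2\normLp{2}{\bw}\frac{d}{dt}\normLp{2}{\bw}$ to divide the inequality by $\normLp{2}{\bw}$ and run Gr\"onwall on $\normLp{2}{\bw}$ itself, with the source entering linearly as the inhomogeneous term $C\epsilon^\alpha\Phi(t)$. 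Then only $\Phi\in L^1(0,T)$ is needed, which does hold for all $\beta\in[0,3]$ because the worst power of $\normLp{2}{\lap\bu}$ appearing is $(\beta+1)/2\leq2$. With that single change the rest of your argument goes through and coincides with the paper's.
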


\begin{definition}\label{cKSE_scal_weakDef1}
Let $\phi_0\in L^2(\nT^2)$ and let $T>0$.  We say that $\phi$ is a \textit{weak solution} to \eqref{cKSE_scal} on the interval $[0,T]$ if $\phi\in L^2([0,T]; H^2(\nT^2))\cap C([0,T]; L^2(\nT^2))$, $\partial_t\phi\in L^2(0,T; H^{-2}(\nT^2))$, and $\phi$ satisfies \eqref{cKSE_scal_EQ} in the sense of $L^2(0,T; H^{-2}(\nT^2))$ and satisfies \eqref{cKSE_scal_IC} in the sense of $C([0,T]; L^2(\nT^2))$.
\end{definition}

\begin{theorem} [Global Well-posedness in scalar form]
    \label{cKSE_scal_Ex_and_Uni1} 
    Let initial data $\phi_0 \in L^2(\mathbb{T}^2)$ be given, and let $T>0$,  $\epsilon > 0$ be fixed. Suppose $\bee$ is a calming function which satisfies Conditions \ref{eta_cond_Lip} and \ref{eta_cond_bdd} of Definition \ref{eta_def}.
    Then weak solutions to \eqref{cKSE_scal} on $[0,T]$ exist, are unique, and depend continuously on the initial data in $L^\infty(0,T; L^2(\nT^2)) \cap L^2(0,T; H^2(\nT^2))$.
\end{theorem}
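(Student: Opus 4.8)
The plan is to run the same Faedo--Galerkin scheme used for the vector form, since the two systems share the structure of a bounded advecting field feeding a destabilizing backward-heat term that is dominated by a bi-Laplacian; the one genuinely new feature is that here $\bee$ is evaluated at $\nabla\phi$, so the Lipschitz bound \ref{eta_cond_Lip} will produce a \emph{derivative} of the solution in the uniqueness estimate. First I would fix $\epsilon>0$, let $P_n$ be the projection onto the first $n$ Fourier modes, and seek $\phi_n=P_n\phi_n$ solving $\dt\phi_n + \tfrac12 P_n[(\bee(\nabla\phi_n)\cdot\nabla)\phi_n] + \triangle\phi_n + \triangle^2\phi_n = 0$ with $\phi_n(0)=P_n\phi_0$. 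Because $\bee$ is globally Lipschitz, the right-hand side is a locally Lipschitz (though non-polynomial) function of the coefficient vector, so Picard--Lindel\"of gives a unique local-in-time solution, which the a priori bounds below extend globally.

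The central estimate comes from pairing the equation with $\phi_n$ in $L^2$. Integration by parts (periodic boundary conditions) gives $(\triangle^2\phi_n,\phi_n)=\|\triangle\phi_n\|_{L^2}^2$ and $(\triangle\phi_n,\phi_n)=-\|\nabla\phi_n\|_{L^2}^2$, so the term $\triangle\phi_n$ is of lower order and is absorbed into the dissipation via $\|\nabla\phi_n\|_{L^2}^2\le\|\phi_n\|_{L^2}\|\triangle\phi_n\|_{L^2}$ and Young's inequality. The decisive point for the nonlinear term is boundedness \ref{eta_cond_bdd}: with $\bv_n:=\bee(\nabla\phi_n)$ one has $\|\bv_n\|_{L^\infty}\le\snorm{\bee}{L^\infty}\le C_\epsilon$, hence $|\tfrac12((\bv_n\cdot\nabla)\phi_n,\phi_n)|\le\tfrac{C_\epsilon}{2}\|\nabla\phi_n\|_{L^2}\|\phi_n\|_{L^2}$, again absorbed after interpolation and Young. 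This yields $\ddt\|\phi_n\|_{L^2}^2+\|\triangle\phi_n\|_{L^2}^2\le C_\epsilon\|\phi_n\|_{L^2}^2$, and Gr\"onwall gives bounds for $\phi_n$ in $L^\infty(0,T;L^2)\cap L^2(0,T;H^2)$ uniform in $n$; from the equation, using $\|\bv_n\|_{L^\infty}\le C_\epsilon$, the time derivatives $\dt\phi_n$ are bounded uniformly in $L^2(0,T;H^{-2})$.

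Next I would pass to the limit. Banach--Alaoglu gives a subsequence with $\phi_n\rightharpoonup\phi$ weak-$*$ in $L^\infty(0,T;L^2)$ and weakly in $L^2(0,T;H^2)$, and $\dt\phi_n\rightharpoonup\dt\phi$ weakly in $L^2(0,T;H^{-2})$; the Aubin--Lions--Simon lemma ($H^2\hookrightarrow\hookrightarrow H^1\hookrightarrow H^{-2}$) upgrades this to strong convergence of $\phi_n$, and hence of $\nabla\phi_n$, in $L^2(0,T;L^2)$. The only delicate limit is the nonlinear one, and here the calming structure is exactly what makes it work: by \ref{eta_cond_Lip}, $|\bee(\nabla\phi_n)-\bee(\nabla\phi)|\le|\nabla\phi_n-\nabla\phi|$, so $\bee(\nabla\phi_n)\to\bee(\nabla\phi)$ strongly in $L^2(0,T;L^2)$, and combined with the uniform $L^\infty$ bound and the strong $L^2$ convergence of $\nabla\phi_n$, the product $\bee(\nabla\phi_n)\cdot\nabla\phi_n$ converges in $L^1$ of space-time, which identifies the limit against the Galerkin test functions. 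Continuity in time and attainment of $\phi_0$ follow from the Lions--Magenes interpolation theorem, completing existence.

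The hard part will be uniqueness and continuous dependence, which is where the scalar form actually costs a derivative. For two solutions $\phi^{(1)},\phi^{(2)}$, set $\psi:=\phi^{(1)}-\phi^{(2)}$, subtract the equations, and pair with $\psi$; splitting the nonlinearity as $(\bee(\nabla\phi^{(1)})\cdot\nabla)\psi + ((\bee(\nabla\phi^{(1)})-\bee(\nabla\phi^{(2)}))\cdot\nabla)\phi^{(2)}$, the first piece is handled exactly as in the a priori estimate using $\snorm{\bee}{L^\infty}\le C_\epsilon$. The obstruction is the second piece: the Lipschitz bound now gives $|\bee(\nabla\phi^{(1)})-\bee(\nabla\phi^{(2)})|\le|\nabla\psi|$, producing the quadratic-in-gradient quantity $\int|\nabla\psi|\,|\nabla\phi^{(2)}|\,|\psi|\,d\bx$, one derivative worse than its vector-form counterpart. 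I would close it with the two-dimensional embedding $H^1(\nT^2)\hookrightarrow L^4$ and the Ladyzhenskaya inequality $\|\psi\|_{L^4}^2\le C\|\psi\|_{L^2}\|\nabla\psi\|_{L^2}$, together with $\|\nabla\psi\|_{L^2}\le\|\psi\|_{L^2}^{1/2}\|\triangle\psi\|_{L^2}^{1/2}$, bounding this term by $C\|\phi^{(2)}\|_{H^2}\|\psi\|_{L^2}^{5/4}\|\triangle\psi\|_{L^2}^{3/4}$. Young's inequality then absorbs the $\|\triangle\psi\|_{L^2}$ factor into the dissipation and leaves a Gr\"onwall coefficient proportional to $1+\|\phi^{(2)}\|_{H^2}^{8/5}$, which lies in $L^1(0,T)$ precisely because $\phi^{(2)}\in L^2(0,T;H^2)$. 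Gr\"onwall's inequality then yields $\|\psi(t)\|_{L^2}^2\le\|\psi(0)\|_{L^2}^2\exp\left(\int_0^t C_\epsilon(1+\|\phi^{(2)}(s)\|_{H^2}^{8/5})\,ds\right)$, giving both uniqueness and the stated continuous dependence in $L^\infty(0,T;L^2)\cap L^2(0,T;H^2)$.
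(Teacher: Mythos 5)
Your proposal is correct and follows essentially the same route as the paper: the paper establishes this theorem via the identical energy estimate (presented formally, with rigor explicitly deferred to the Galerkin argument of the vector case, which your first three paragraphs reproduce), the same splitting of the nonlinearity for uniqueness, and the same absorption of the destabilizing terms into the bi-Laplacian dissipation followed by Gr\"onwall. The only cosmetic deviation is in the uniqueness estimate, where you take H\"older as $\|\nabla\psi\|_{L^2}\|\nabla\phi^{(2)}\|_{L^4}\|\psi\|_{L^4}$, yielding the Gr\"onwall coefficient $\|\phi^{(2)}\|_{H^2}^{8/5}$, whereas the paper places the Lipschitz difference in $L^4$ and the solution's gradient in $L^2$, yielding the coefficient $\|\nabla\phi^{(2)}\|_{L^2}^{2}$; both coefficients are integrable on $[0,T]$ and close the argument.
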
 

\begin{theorem} [Convergence in scalar form]
    \label{thm_cKSE_scal_conv1}
    Choose $\phi_0 \in L^2(\T)$ and let $\phi$ be the corresponding weak solution of the scalar KSE \eqref{KSE_scalar} with maximal time of existence $T^*$. 
    We assume that $\phi$ is in the natural energy space: for $T < T^*$,
		\begin{align}
			\phi \in C([0, T]; L^2(\T)) \cap L^2(0, T; H^2(\T)).
            \label{rr_scal}
		\end{align}
    Suppose $\bee$ satisfies \ref{eta_cond_Lip}, \ref{eta_cond_bdd}, and \ref{eta_cond_conv} of Definition \ref{eta_def}, so that there exists $C$, $\alpha >0$, and $\beta \in [0, 3 ]$ for which \eqref{pwise_conv} holds.
    and let $\phi^\epsilon$ be the corresponding weak solution of the scalar calmed KSE \eqref{cKSE_scal} with calming function $\bee$ and with initial data $\phi_0$. 
    Consider the convergence of $\phi^\epsilon$ to $\phi$ on the interval $[0,T]$.
    The difference $\phi^\epsilon - \phi$ satisfies
    \begin{align*}
        \| \phi^\epsilon - \phi\|_{L^\infty(0,T;L^2)} & \leq 
        K \epsilon^\alpha, \\
        \| \phi^\epsilon - \phi\|_{L^2(0,T;H^2)} & \leq 
        K' \epsilon^\alpha, 
    \end{align*}
    where $K, K' >0$ depend on $T$, $\beta$, and various norms of $\phi$, but not on $\epsilon$ or $\alpha$.
\end{theorem}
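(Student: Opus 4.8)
The plan is to estimate the difference $w:=\phi^\epsilon-\phi$ in the natural energy norm and to close the estimate with a Gr\"onwall inequality whose constant is rendered uniform in $\epsilon$ by a continuation (bootstrap) argument in time; the argument runs parallel to the vector convergence theorem, modified to reflect that the scalar nonlinearity $\tfrac12(\bee(\nabla\phi)\cdot\nabla)\phi$ is built from $\nabla\phi$ rather than from $\bu$. First I would note that $w(0)=0$, since $\phi^\epsilon$ and $\phi$ share the datum $\phi_0$; this vanishing of the initial error is what upgrades the $O(1)$ consistency of the two models into an $O(\epsilon^\alpha)$ error. Subtracting \eqref{cKSE_scalar} from \eqref{KSE_scalar} and decomposing the nonlinear difference by adding and subtracting gives
\begin{align*}
\dt w+\triangle w+\triangle^2w=-\tfrac12\big(N_1+N_2+N_3\big),
\end{align*}
where $N_1:=(\bee(\nabla\phi^\epsilon)\cdot\nabla)w$, $N_2:=\big((\bee(\nabla\phi^\epsilon)-\bee(\nabla\phi))\cdot\nabla\big)\phi$, and $N_3:=\big((\bee(\nabla\phi)-\nabla\phi)\cdot\nabla\big)\phi$ is the consistency error governed by Condition \ref{eta_cond_conv}.

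Next I would pair the difference equation with $w$ in $L^2(\T)$ to obtain
\begin{align*}
\tfrac12\ddt\|w\|_{L^2}^2+\|\triangle w\|_{L^2}^2=\|\nabla w\|_{L^2}^2-\tfrac12\big(N_1+N_2+N_3,\,w\big),
\end{align*}
and then bound the right-hand side so as to absorb everything possible into the dissipation $\|\triangle w\|_{L^2}^2$. The destabilizing term is handled by $\|\nabla w\|_{L^2}^2\le\|w\|_{L^2}\|\triangle w\|_{L^2}$ and Young's inequality. For $N_1$ I would integrate by parts, $(N_1,w)=-\tfrac12\int_{\T}(\nabla\cdot\bee(\nabla\phi^\epsilon))\,w^2$; the key structural point is that the unit Lipschitz constant \ref{eta_cond_Lip} forces $|\nabla\cdot\bee(\nabla\phi^\epsilon)|\lesssim|\nabla^2\phi^\epsilon|$ \emph{with no factor of $1/\epsilon$}, after which the 2D Ladyzhenskaya inequality $\|w\|_{L^4}^2\lesssim\|w\|_{L^2}\|\nabla w\|_{L^2}$, interpolation, and Young's inequality put the top-order part into $\|\triangle w\|_{L^2}^2$ and leave a coefficient controlled by $\|\nabla^2\phi^\epsilon\|_{L^2}$. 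For $N_2$ the Lipschitz bound $|\bee(\nabla\phi^\epsilon)-\bee(\nabla\phi)|\le|\nabla w|$ together with $\nabla\phi\in L^4$ (from $\phi\in H^2$ via $H^2(\T)\hookrightarrow W^{1,4}$) close the estimate in the same way. For $N_3$, Condition \ref{eta_cond_conv} and the embedding $H^2(\T)\hookrightarrow W^{1,p}$ for every $p<\infty$ give $|(N_3,w)|\lesssim\epsilon^\alpha\|\nabla\phi\|_{L^{2(\beta+1)}}^{\beta+1}\|w\|_{L^2}$, which is the small forcing.

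Collecting the bounds yields a differential inequality of the form
\begin{align*}
\ddt\|w\|_{L^2}^2+\|\triangle w\|_{L^2}^2\le g(t)\,\|w\|_{L^2}^2+C\,\epsilon^{2\alpha}\,h(t),
\end{align*}
with $g$ controlled by $\|\phi\|_{H^2}$ and $\|\nabla^2\phi^\epsilon\|_{L^2}$, and $h$ a fixed power of $\|\phi\|_{H^2}$. Since $\phi\in L^2(0,T;H^2)$, the part of $g$ depending only on $\phi$ lies in $L^1(0,T)$; to guarantee $h\in L^1(0,T)$ for the larger values of $\beta$ I would exploit the flexibility in Condition \ref{eta_cond_conv}: interpolating the given pointwise bound against $|\bee(\bx)-\bx|\le2|\bx|$ (valid because $\bee$ is unit-Lipschitz with $\bee(\mathbf0)=\mathbf0$) shows \eqref{pwise_conv} holds for every $\beta$ down to a smaller value with a correspondingly reduced $\alpha$, so one simply selects $\beta$ small enough that $\|\phi\|_{H^2}^{\beta+1}$ is time-integrable, at the cost of a slower (but still algebraic) rate $\epsilon^\alpha$; the parabolic smoothing of the genuine KSE solution for $t>0$ also supplies the needed integrability. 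This is precisely why $K,K'$ are permitted to depend on $\beta$ and on norms of $\phi$.

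Finally I would close the estimate by Gr\"onwall together with a bootstrap in time, which I expect to be the main obstacle: the coefficient $g$ contains $\|\nabla^2\phi^\epsilon\|_{L^2}$, and the only a priori control of $\|\phi^\epsilon\|_{L^2(0,T;H^2)}$ that is uniform in $\epsilon$ comes from the Riccati-type energy estimate for $\phi^\epsilon$, which is guaranteed only \emph{locally} in time. To obtain a uniform constant on all of $[0,T]$ I would fix $M>0$ and set $T_\epsilon:=\sup\{t\le T:\|w\|_{L^\infty(0,t;L^2)}^2+\|w\|_{L^2(0,t;H^2)}^2\le M\}$. On $[0,T_\epsilon]$ the splitting $\|\phi^\epsilon\|_{H^2}\le\|\phi\|_{H^2}+\|w\|_{H^2}$ bounds $\int_0^{T_\epsilon}g$ in terms of $\|\phi\|_{L^2(0,T;H^2)}$ and $M$ only, so Gr\"onwall with $w(0)=0$ gives $\|w\|_{L^\infty(0,T_\epsilon;L^2)}^2+\|w\|_{L^2(0,T_\epsilon;H^2)}^2\le K_\ast^2\,\epsilon^{2\alpha}$ with $K_\ast$ independent of $\epsilon$. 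For $\epsilon$ small this is strictly below $M$, so a standard continuity argument forces $T_\epsilon=T$, and the two asserted bounds follow on all of $[0,T]$. The crux of the whole proof is exactly this interplay: the unit-Lipschitz structure of $\bee$ removes the $1/\epsilon$ from the transport term, and the bootstrap converts the merely local $\epsilon$-uniform control of $\phi^\epsilon$ into a global one.
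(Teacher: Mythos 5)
Your proposal is sound and reaches the result by a genuinely different route than the paper, even though the overall architecture (energy estimate for the difference $w=\phi^\epsilon-\phi$, Gr\"onwall, bootstrap in time) is the same. The key divergence is the transport term: you keep $(\bee(\grad\phi^\epsilon)\cdot\grad)w$ intact and integrate by parts, so your Gr\"onwall coefficient contains $\normLp{2}{\grad^2\phi^\epsilon}^{4/3}$, a quantity with no $\epsilon$-uniform a priori bound (the paper's energy estimates for $\phi^\epsilon$ carry $\normLp{\infty}{\bee}\sim 1/\epsilon$); you then must run the bootstrap on the full energy quantity $\|w\|_{L^\infty(0,t;L^2)}^2+\|w\|_{L^2(0,t;H^2)}^2\le M$, so that the triangle inequality $\normHs{2}{\phi^\epsilon}\le\normHs{2}{\phi}+\normHs{2}{w}$ keeps the coefficient integrable in terms of $M$, $T$, and $\|\phi\|_{L^2(0,T;H^2)}$ alone. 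This does close, because the coefficient enters at the subcritical power $4/3<2$, and your continuity argument for $T_\epsilon=T$ is standard. The paper instead splits the nonlinear difference into four pieces arranged so that $\bee$ is only ever evaluated at $\grad\phi$ or enters through the Lipschitz bound $|\bee(\grad\phi^\epsilon)-\bee(\grad\phi)|\le|\grad w|$; no norm of $\phi^\epsilon$ appears anywhere, the price being a supercritical term $C\normLp{2}{w}^6$, which is tamed by the much weaker ansatz $\normLp{2}{w}\le 1$ (that is the paper's bootstrap, via the abstract continuity lemma). Both routes work: the paper's is lighter, needing no chain rule for Lipschitz compositions and no control whatsoever of the calmed solution; yours trades that for a stronger bootstrap hypothesis, which you correctly verify is self-improving.

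One substantive caveat, which your write-up makes explicit and the paper's does not: the consistency term. Both proofs reduce it to $\epsilon^\alpha\|\grad\phi\|_{L^{2\beta+2}}^{\beta+1}\normLp{2}{w}$ with $\|\grad\phi\|_{L^{2\beta+2}}^{\beta+1}\le C\normLp{2}{\phi}^{1/2}\normHs{2}{\phi}^{\beta+1/2}$, and with only $\phi\in L^2(0,T;H^2(\nT^2))$ this is time-integrable exactly when $\beta\le\tfrac32$ --- not on the full range $\beta\in[0,3]$ claimed in the statement (that range is the right one for the vector theorem, where the consistency term is $|\bu|^\beta|\grad\bu|$ rather than $|\grad\phi|^{\beta+1}$; the paper's own scalar proof silently records the restriction $\beta\in[0,\tfrac32]$). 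Your interpolation of \eqref{pwise_conv} against the Lipschitz bound $|\bee(\bx)-\bx|\le 2|\bx|$ does recover algebraic convergence for $\beta>\tfrac32$, but only at a degraded exponent $\epsilon^{\alpha\theta}$, $\theta<1$, so it does not prove the stated $\epsilon^\alpha$ rate there either; moreover, keeping the forcing linear in $\normLp{2}{w}$ (dividing out one factor of $\normLp{2}{w}$ before Gr\"onwall, as the paper does) rather than squaring it into $\epsilon^{2\alpha}h(t)$ is what yields the widest threshold $\beta\le\tfrac32$ at the full rate. In a final write-up, either restrict $\beta$ accordingly or state the reduced exponent explicitly.
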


\section{Preliminaries}\label{secPre}
\noindent
In this section, we lay out some notation and preliminary notions that will be used later in the text. We denote the $2\pi$-periodic box $\nT^2:=\nR^2/(2\pi\nZ)^2=[0,2\pi)^2$.  
We denote the set of real vector-valued $L^2$ functions on $\nT^2$ by
\begin{align*}
L^2(\nT^2):=\set{
    \bu \bigg|\bu(\bx)=\sum_{\bk\in\nZ^2} \hat{\bu}_\bk  e^{i\bk\cdot\bx},  \,
    \cnj{\hat{\bu}_\bk} = \hat{\bu}_{-\bk}, \text{ and }\sum_{\bk\in\nZ^2}|\hat{\bu}_\bk|^2<\infty
}
\end{align*}  
(with the usual convention of equivalence up to sets of measure zero).
We also denote the (real) $L^2$ inner-product and $H^s$ Sobolev norm, $s\in \R$,  by
\[
 (\bu,\bv) := \sum_{i=1}^2\int_{\nT^2} u_i(\bx) v_i(\bx)\,d\bx,
\qquad
 \|\bu\|_{H^s} := \pnt{\sum_{\bk\in\nZ^2} \lp 1+ |\bk|\rp ^{2s}|\hat{\bu}_\bk|^{2}}^{1/2},
\]
and the corresponding space $H^s(\nT^2) = \set{\bu\in L^2(\nT^2)\big| \quad \|\bu\|_{H^s}<\infty}$.
The  space $L^2(\T)$ has an orthogonal basis of eigenfunctions of the operator Laplacian operator $-\lap$ given by 
\begin{align*}
     \left\{ \lp e^{i\bk\cdot\bx}, 0 \rp, \lp 0, e^{i\bk\cdot\bx} \rp  : \bk\in\nZ^2 \right\},
 \end{align*}
 with corresponding eigenvalues $\left\{ \abs{\bk}^2 : \bk\in\nZ^2 \right\}$.

For any $m\in\nN$, we denote by $P_m: L^2(\nT^2) \to L^2(\nT^2)$ the projection onto finitely many eigenfunctions of the operator $-\lap$, 
\[
 P_m \bu = \sum_{\substack{\bk\in\nZ^2\\|\bk|\leq m}}\hat{\bu}_\bk e^{i\bk\cdot\bx}.
\]
Denote $Q_m:=I-P_m$.  We recall the following projection estimates for any $\bu\in H^s(\T)$, $s>0$,
\begin{align}
    \normLp{2}{(-\lap)^s P_m \bu} &\leq m^s \normLp{2}{P_m\bu}  
    \label{Pm_Est} \\ 
    \|Q_m\bu\|_{L^2} &\leq
    \frac{1}{m^{s}}\|\bu\|_{H^s}.
    \label{QNproj}
\end{align}



We also recall Agmon's inequality on $\nT^2$, for $s_1< 1 <s_2$, 
\begin{align}\label{agmon}
\|\bu\|_{L^\infty}\leq C\|\bu\|_{H^{s_1}}^{\theta}\| \bu \|_{H^{s_2}}^{1-\theta},
\end{align}
where $\theta s_1 + (1-\theta)s_2 = 1$, and Ladyzhenskaya's inequality,
\begin{align}\label{ladyz}
\normLp{4}{\bu} \leq C \normLp{2}{\bu}^{1/2} \normLp{2}{\grad\bu}^{1/2}.
\end{align}

Using integration by parts, the Cauchy-Schwarz, and Young's inequalities, we obtain, for any $\delta>0$, the estimate
\begin{align}
    \label{grad_interp}
    \normLp{2}{\grad\bu}^2 \leq 
    \frac{1}{2\delta}\normLp{2}{\bu}^2 + 
    \frac{\delta}{2}\normLp{2}{\lap\bu}^2
\end{align}
which will be used repeatedly throughout the paper.
We also denote by $C$ a positive constant which may change from line to line.

 \section{Global Well-Posedness for Calmed KSE}\label{sec_cKSE_well_posedness}

\begin{lemma}\label{eta_lemma_Lip}
Suppose that $\bee$ satisfies Conditions \ref{eta_cond_Lip} and \ref{eta_cond_bdd} of \ref{eta_def}.  Then the following statements hold.
\begin{enumerate}[label=(\roman*)]
    \item Given $1 \leq p \leq \infty$, if $\bu \in L^p(\nT^2)$ then $\bee(\bu)\in L^p(\nT^2)$ and $\bee$ is Lipschitz in $ L^p(\nT^2)$ with Lipschitz constant $1$.
    \item Fix $\bu, \bw\in L^2(0,T; L^2(\nT^2))$ and $T>0$, let \newline $I_{\bu,\bw}: L^2(0,T; H^1(\nT^2)) \to \nR$ 
    be the map 
    \begin{align}\label{LinFunc}
        I_{\bu,\bw}(\bphi) = \int_0^T \lp \lp \bee(\bu)\cdot \grad \rp \bphi, \bw \rp dt .
    \end{align}
    Then $I_{\bu,\bw}$ is continuous.
\end{enumerate}
\end{lemma}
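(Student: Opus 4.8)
The plan is to handle the two parts separately, in both cases transferring the pointwise hypotheses on $\bee$ (Conditions \ref{eta_cond_Lip} and \ref{eta_cond_bdd} of Definition \ref{eta_def}) to the function-space setting.

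For part (i), I would first invoke Condition \ref{eta_cond_bdd}: with $\epsilon>0$ fixed, $\bee$ is bounded on $\R^2$, say $\normLp{\infty}{\bee}=M<\infty$ (with $M$ depending on $\epsilon$). Since $\bee$ is continuous (being Lipschitz) and $\bu$ is measurable, $\bee(\bu)$ is measurable and bounded pointwise by $M$, so $\bee(\bu)\in L^\infty(\nT^2)$; because $\nT^2$ has finite measure, this gives $\bee(\bu)\in L^p(\nT^2)$ for every $p\in[1,\infty]$. For the Lipschitz bound I would use Condition \ref{eta_cond_Lip} pointwise, $\abs{\bee(\bu(\bx))-\bee(\bv(\bx))}\leq\abs{\bu(\bx)-\bv(\bx)}$ for a.e.\ $\bx$. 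For $p<\infty$, raising to the $p$-th power and integrating over $\nT^2$ yields $\normLp{p}{\bee(\bu)-\bee(\bv)}\leq\normLp{p}{\bu-\bv}$; for $p=\infty$ the same pointwise inequality gives the bound directly upon taking essential suprema.

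For part (ii), the key structural observation is that $I_{\bu,\bw}$ is \emph{linear} in $\bphi$, so continuity is equivalent to boundedness. I would estimate the integrand for a.e.\ fixed $t$ using Condition \ref{eta_cond_bdd} to place $\bee(\bu)$ in the $L^\infty$ slot, followed by Cauchy--Schwarz in space:
\[
\abs{\lp(\bee(\bu)\cdot\grad)\bphi,\bw\rp}\leq M\,\normLp{2}{\grad\bphi}\,\normLp{2}{\bw}.
\]
Integrating in $t$ and applying Cauchy--Schwarz in time, together with $\normLp{2}{\grad\bphi}\leq\norm{\bphi}_{H^1}$, gives
\[
\abs{I_{\bu,\bw}(\bphi)}\leq M\lp\int_0^T\normLp{2}{\grad\bphi}^2\,dt\rp^{1/2}\norm{\bw}_{L^2(0,T;L^2)}\leq M\,\norm{\bw}_{L^2(0,T;L^2)}\,\norm{\bphi}_{L^2(0,T;H^1)}.
\]
Since $M$ and $\norm{\bw}_{L^2(0,T;L^2)}$ are finite constants independent of $\bphi$ (the latter because $\bw\in L^2(0,T;L^2(\nT^2))$ by hypothesis), $I_{\bu,\bw}$ is a bounded linear functional, hence continuous.

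There is no genuine analytic obstacle here; the content is essentially bookkeeping. The one point requiring care is recognizing that it is the \emph{boundedness} Condition \ref{eta_cond_bdd}, not the Lipschitz Condition \ref{eta_cond_Lip}, that controls $\bee(\bu)$ in the advective position, and then pairing that $L^\infty$ control with an $L^2$ bound on $\grad\bphi$ and an $L^2$ bound on $\bw$ so that the two successive Cauchy--Schwarz applications (first in space, then in time) both close. The Lipschitz condition enters only to guarantee measurability of $\bee(\bu)$ and is the substance of part (i).
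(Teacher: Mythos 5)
Your proof is correct and follows essentially the same route as the paper: part (i) is the immediate pointwise argument, and part (ii) is the identical chain of estimates (placing $\bee(\bu)$ in $L^\infty$ via Condition \ref{eta_cond_bdd}, then Cauchy--Schwarz in space and in time) yielding the bound $\normLp{\infty}{\bee}\,\norm{\bw}_{L^2(0,T;L^2)}\,\norm{\bphi}_{L^2(0,T;H^1)}$, with linearity giving continuity. The only cosmetic difference is that you use boundedness of $\bee$ (plus finiteness of the measure of $\nT^2$) to get $\bee(\bu)\in L^p$, whereas the paper's hint invokes the Lipschitz condition, which gives $\abs{\bee(\bu)}\leq\abs{\bu}+\abs{\bee(\mathbf{0})}$; both are valid.
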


\begin{proof}
    \textit{(i)}. The result follows immediately from the definition of the $L^p$ norm and from Condition \ref{eta_cond_Lip} of Definition \ref{eta_def}.

   \textit{(ii)}. Let $\eta_j^\epsilon(\bu)$ denote the $j$-th component of $\bee(\bu)$. \\ For $\bphi \in L^2(0,T; H^1(\nT^2))$, we estimate
    \begin{align*}
        \abs{ I_{\bu,\bw}(\bphi) } &\leq 
        \sum_{j=1}^2 \int_0^T  \abs{ \lp \eta_j^\epsilon(\bu)\partial_j \bphi, \bw \rp }dt \\ &\leq 
        \sum_{j=1}^2 \int_0^T   \normLp{\infty}{\eta_j^\epsilon(\bu)}\normLp{2}{\partial_j \bphi} \normLp{2}{\bw}  dt \\ &\leq 
        \normLp{\infty}{\bee} \int_0^T \normHs{1}{\bphi} \normLp{2}{\bw}  dt \\ &\leq 
        \normLp{\infty}{\bee} \norm{\bw}_{L^2(0,T;L^2)} \norm{\bphi}_{L^2(0,T; H^1)}
    \end{align*}
    by the Cauchy-Schwarz inequality. 
\end{proof}


Using the projection operator $P_m$, define the finite-dimensional space $H_m := P_m(L^2(\nT^2))$.
Consider the following initial value problem obtained via Galerkin approximation: 
Given $\bu_0 \in L^2(\nT^2)$, find $\bu\in H_m$ which satisfies 
\begin{subequations}\label{cKSE_Galerkin}
\begin{align}
\dt{\bu} + P_m\lp(\boldeta^\epsilon(\bu)\cdot\nabla)\bu\rp + \triangle\bu + \triangle^2\bu &= \mathbf{0},
\\
\bu(\bx,0)&=P_m\bu_0(\bx).
\end{align}
\end{subequations}



\begin{lemma}\label{LocalLipschitz}
    If $\bee$ satisfies \ref{eta_cond_Lip} of Definition \ref{eta_def}, then the map $F: H_m \to H_m$ defined by
    \[ F(\bu)= -P_m\lp(\boldeta^\epsilon(\bu)\cdot\nabla)\bu\rp - \triangle\bu - \triangle^2\bu\]
    is locally Lipschitz on $H_m$. As a consequence, solutions to \eqref{cKSE_Galerkin} exist and are unique in $C^1([0,T],H_m)$ for some $T >0$.  
\end{lemma}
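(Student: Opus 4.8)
The plan is to reduce the existence--uniqueness claim to the Picard--Lindel\"of (Cauchy--Lipschitz) theorem for the autonomous ODE $\dt{\bu}=F(\bu)$ on the finite-dimensional space $H_m$; for that it suffices to show $F$ is locally Lipschitz. The linear part $-\triangle\bu-\triangle^2\bu$ is a bounded linear operator on $H_m$, since for $\bw\in H_m$ one has $\normLp{2}{\triangle\bw}\leq m^2\normLp{2}{\bw}$ and $\normLp{2}{\triangle^2\bw}\leq m^4\normLp{2}{\bw}$ (cf.\ \eqref{Pm_Est}), hence it is globally Lipschitz. Thus the whole difficulty is the non-polynomial nonlinear term $N(\bu):=P_m\lp(\boldeta^\epsilon(\bu)\cdot\nabla)\bu\rp$. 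Throughout I would fix $R>0$ and work on the ball $B_R=\set{\bu\in H_m:\normLp{2}{\bu}\leq R}$, exploiting that on the finite-dimensional space $H_m$ all norms are equivalent; in particular the inverse (Bernstein) inequality $\normLp{2}{\grad\bw}\leq m\normLp{2}{\bw}$ holds and $\normLp{\infty}{\grad\bw}\leq C_m\normLp{2}{\bw}$ for a constant $C_m$ depending only on $m$.

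The key step is to estimate $N(\bu)-N(\bv)$ for $\bu,\bv\in B_R$ using the add-and-subtract splitting
\[
N(\bu)-N(\bv)=P_m\lp(\boldeta^\epsilon(\bu)\cdot\grad)(\bu-\bv)\rp+P_m\lp\lp(\boldeta^\epsilon(\bu)-\boldeta^\epsilon(\bv))\cdot\grad\rp\bv\rp.
\]
Since $P_m$ is an orthogonal projection it is a contraction in $L^2$, so each summand can be estimated without the projection. For the first summand I would write $\normLp{2}{(\boldeta^\epsilon(\bu)\cdot\grad)(\bu-\bv)}\leq\normLp{\infty}{\boldeta^\epsilon(\bu)}\,\normLp{2}{\grad(\bu-\bv)}$ and note that, being Lipschitz, $\boldeta^\epsilon$ is bounded on bounded sets: the pointwise estimate $\abs{\boldeta^\epsilon(\bu)}\leq\abs{\boldeta^\epsilon(\mathbf 0)}+\abs{\bu}$ together with norm equivalence gives $\normLp{\infty}{\boldeta^\epsilon(\bu)}\leq\abs{\boldeta^\epsilon(\mathbf 0)}+C_mR$ on $B_R$; combined with the inverse inequality this bounds the first summand by a constant (depending on $R$, $m$) times $\normLp{2}{\bu-\bv}$. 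For the second summand, the crucial use of Condition \ref{eta_cond_Lip} enters through Lemma \ref{eta_lemma_Lip}\textit{(i)}, which gives $\normLp{2}{\boldeta^\epsilon(\bu)-\boldeta^\epsilon(\bv)}\leq\normLp{2}{\bu-\bv}$; pairing this with $\normLp{\infty}{\grad\bv}\leq C_mR$ yields $\normLp{2}{((\boldeta^\epsilon(\bu)-\boldeta^\epsilon(\bv))\cdot\grad)\bv}\leq C_mR\,\normLp{2}{\bu-\bv}$. Adding the two contributions shows $\normLp{2}{N(\bu)-N(\bv)}\leq L_R\normLp{2}{\bu-\bv}$ with $L_R$ depending on $R$, $m$ (and $\epsilon$) but not on $\bu,\bv$.

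The main obstacle is precisely the second summand: because $\boldeta^\epsilon$ is non-polynomial one cannot invoke algebraic cancellation, and the difference $\boldeta^\epsilon(\bu)-\boldeta^\epsilon(\bv)$ must be controlled entirely by the Lipschitz property; I expect the cleanest route is to absorb the $\grad\bv$ factor in $L^\infty$ using finite-dimensionality, rather than chasing H\"older or Ladyzhenskaya estimates. Once $F=N(\cdot)-\triangle(\cdot)-\triangle^2(\cdot)$ is shown locally Lipschitz on the finite-dimensional $H_m$, the Picard--Lindel\"of theorem furnishes a unique solution $\bu\in C^1([0,T],H_m)$ of \eqref{cKSE_Galerkin} on some interval $[0,T]$ with $T>0$, which is the asserted conclusion.
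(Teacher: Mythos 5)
Your proof is correct and follows essentially the same route as the paper's: an add-and-subtract splitting of the nonlinear difference, an $L^\infty$--$L^2$ H\"older pairing, the $L^2$-Lipschitz property of $\bee$ from Lemma \ref{eta_lemma_Lip}, finite-dimensional inverse estimates on $H_m$, and Picard--Lindel\"of. Two differences are worth recording. First, your splitting is the mirror image of the paper's: the paper pairs $\bee(\bu)-\bee(\bv)$ with $\grad\bu$ and $\bee(\bv)$ with $\grad(\bu-\bv)$, bounding $\normLp{\infty}{\grad\bu}$ via Agmon's inequality and the finiteness of $\normHs{3}{\bu}$ on $H_m$, where you instead place $\grad\bv$ in $L^\infty$ via the norm equivalence $\normLp{\infty}{\grad\bv}\leq C_m\normLp{2}{\bv}$; these are interchangeable. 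Second, and more substantively, your argument uses only Condition \ref{eta_cond_Lip}, which is exactly what the lemma assumes: you control $\normLp{\infty}{\bee(\bu)}$ on a ball through the linear-growth bound $\abs{\bee(\bx)}\leq\abs{\bee(\mathbf{0})}+\abs{\bx}$, a consequence of Lipschitz continuity alone. The paper's estimate instead contains the factor $\normLp{\infty}{\bee}$, which is finite only under Condition \ref{eta_cond_bdd} --- a hypothesis not stated in the lemma (harmless in context, since every calming function considered satisfies it, but your route matches the stated hypotheses). A final minor point in your favor: your Lipschitz constant is uniform over the ball $B_R$, which is precisely the form Picard--Lindel\"of requires, whereas the paper's constant depends on the fixed point $\bu$ through $\normHs{3}{\bu}$ and implicitly needs the easy remark that this quantity is locally bounded on $H_m$.
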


\begin{proof}
    Fix $\bu \in H_m$ and let $\bv \in H_m$ be arbitrary. Rewrite the difference $F(\bu) - F(\bv)$ as 
    \begin{align}
        F(\bu) - F(\bv) &= 
        - \lap\lp \bu - \bv \rp 
        - \lap^2\lp \bu - \bv \rp 
        - P_m \lp \lp \lp \bee(\bu) - \bee(\bv) \rp \cdot \grad \rp \bu \rp \notag \\ &\quad  
        - P_m \lp \lp \bee(\bv)\cdot \grad \rp \lp \bu - \bv \rp \rp.
        \notag
    \end{align}
    From Condition \ref{eta_cond_Lip} of Definition \ref{eta_def}, Estimate \eqref{Pm_Est}, and Agmon's inequality, it follows that
    \begin{align*}
    &\quad
        \normLp{2}{F(\bu) - F(\bv)} 
        \\&\leq 
        \normLp{2}{\lap\lp \bu - \bv \rp } +
        \normLp{2}{\lap^2\lp \bu - \bv \rp} \\ &\quad + 
        \normLp{2}{\lp \lp \bee(\bu) - \bee(\bv) \rp \cdot \grad \rp \bu} + 
        \normLp{2}{\lp \bee(\bv)\cdot \grad \rp \lp \bu - \bv \rp } \\ &\leq 
        \lp m+m^2 \rp \normLp{2}{\bu - \bv} +
        \normLp{\infty}{\grad\bu }\normLp{2}{\bee(\bu) - \bee(\bv)} \\ &\quad + 
        \normLp{\infty}{\bee(\bv)} \normLp{2}{\grad \lp \bu - \bv  \rp}  \\ &\leq 
        \lp m+m^2 \rp \normLp{2}{\bu - \bv} +
        \normHs{3}{\bu }\normLp{2}{\bu - \bv} + 
        m^\frac{1}{2}\normLp{\infty}{\bee}\normLp{2}{\bu - \bv}.
    \end{align*}
    Since $\bu$ is a finite linear combination of eigenfunctions of $-\lap$, $\normHs{3}{\bu} < \infty$. Thus $F$ is locally Lipschitz at $\bu \in H_m$.  Existence and uniqueness of solutions to \eqref{cKSE_Galerkin}  in $C^1([0,T],H_m)$ now follows as a consequence of the Picard-Lindel\"of Theorem
\end{proof}


\begin{remark}
    Due to the presence of the calming function $\bee$, the Galerkin system here is not necessarily quadratic such as in the case of the 2D Navier-Stokes equations or the 2D Kuramoto-Sivashinsky equations. Thus we verify the well-posedness of the system.
\end{remark}

\begin{theorem} \label{Ex_and_Uni}
 Let $\bu_0\in L^2(\nT^2)$, let $T>0$ and fix $\epsilon>0$. Suppose $\bee$ is a calming function which satisfies Conditions \ref{eta_cond_Lip} and \ref{eta_cond_bdd} of Definition \ref{eta_def}. Then weak solutions to \eqref{cKSE} on $[0,T]$ exist, are unique, and depend continuously on the initial data in $L^\infty(0,T; L^2(\nT^2)) \cap L^2(0,T; H^2(\nT^2))$.
\end{theorem}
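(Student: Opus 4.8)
The plan is to run the Galerkin scheme \eqref{cKSE_Galerkin}, whose solutions exist locally by Lemma~\ref{LocalLipschitz}, and to promote them to global weak solutions via $m$-uniform a priori bounds. First I would test \eqref{cKSE_Galerkin} against $\bu$ in $L^2(\T)$. Using $(\lap\bu,\bu)=-\normLp{2}{\grad\bu}^2$ and $(\lap^2\bu,\bu)=\normLp{2}{\lap\bu}^2$, the energy balance reads
\[
\tfrac12\ddt\normLp{2}{\bu}^2 - \normLp{2}{\grad\bu}^2 + \normLp{2}{\lap\bu}^2 + \lp(\bee(\bu)\cdot\grad)\bu,\bu\rp = 0.
\]
The crucial point is that, unlike the divergence-free Navier--Stokes nonlinearity, the calmed term need not vanish when tested against $\bu$, since $\bee(\bu)$ is not divergence free. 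Instead I would invoke Condition~\ref{eta_cond_bdd}: boundedness of $\bee$ gives $\abs{\lp(\bee(\bu)\cdot\grad)\bu,\bu\rp}\leq\normLp{\infty}{\bee}\,\normLp{2}{\grad\bu}\,\normLp{2}{\bu}$, while the destabilizing term $\normLp{2}{\grad\bu}^2$ is handled by interpolation \eqref{grad_interp}. Absorbing all gradient contributions and a fraction of $\normLp{2}{\lap\bu}^2$ via Young's inequality yields $\ddt\normLp{2}{\bu}^2 + \normLp{2}{\lap\bu}^2 \leq C(\epsilon)\normLp{2}{\bu}^2$. Gr\"onwall then bounds $\bu_m$ in $L^\infty(0,T;L^2)$ uniformly in $m$, and integrating in time bounds $\bu_m$ in $L^2(0,T;H^2)$; in particular the local solutions extend to $[0,T]$.

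Next I would estimate the time derivative. Writing $\dt\bu_m = -P_m\lp(\bee(\bu_m)\cdot\grad)\bu_m\rp - \lap\bu_m - \lap^2\bu_m$ and pairing with $H^2$ functions, the two linear terms are bounded in $H^{-2}$ by $\normHs{2}{\bu_m}$, while $\normLp{2}{(\bee(\bu_m)\cdot\grad)\bu_m}\leq\normLp{\infty}{\bee}\,\normLp{2}{\grad\bu_m}$ shows the nonlinear term is controlled in $L^2\hookrightarrow H^{-2}$; the previous bounds make all of these uniform in $L^2(0,T;H^{-2})$. With $\{\bu_m\}$ bounded in $L^2(0,T;H^2)$ and $\{\dt\bu_m\}$ bounded in $L^2(0,T;H^{-2})$, the Aubin--Lions lemma furnishes a subsequence converging strongly in $L^2(0,T;H^1)$, weakly in $L^2(0,T;H^2)$, and weak-$*$ in $L^\infty(0,T;L^2)$. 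The linear terms pass to the limit by weak convergence. For the nonlinearity I would split $\bee(\bu_m)\cdot\grad\bu_m - \bee(\bu)\cdot\grad\bu = \lp\bee(\bu_m)-\bee(\bu)\rp\cdot\grad\bu_m + \bee(\bu)\cdot\grad(\bu_m-\bu)$; Condition~\ref{eta_cond_Lip} together with the strong $L^2(0,T;H^1)$ convergence forces both pieces to zero in $L^1(0,T;L^1)$, which suffices for passage to the limit against smooth test functions. That $\bu\in C([0,T];L^2)$ follows from the Lions--Magenes lemma, and $\bu_m(0)=P_m\bu_0\to\bu_0$ recovers \eqref{cKSE_IC}.

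For uniqueness and continuous dependence I would take two weak solutions $\bu,\bv$ with data $\bu_0,\bv_0$, set $\bw=\bu-\bv$, and write the nonlinear difference as $\lp(\bee(\bu)-\bee(\bv))\cdot\grad\rp\bu + (\bee(\bv)\cdot\grad)\bw$. Pairing the difference equation with $\bw$ (justified since $\bw\in L^2(0,T;H^2)$ and $\dt\bw\in L^2(0,T;H^{-2})$), I would use the Lipschitz bound $\abs{\bee(\bu)-\bee(\bv)}\leq\abs{\bw}$, Ladyzhenskaya's inequality \eqref{ladyz} to estimate $\int\abs{\grad\bu}\abs{\bw}^2$, and \eqref{grad_interp} to absorb gradient terms into $\normLp{2}{\lap\bw}^2$. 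This produces $\ddt\normLp{2}{\bw}^2 + \normLp{2}{\lap\bw}^2 \leq g(t)\normLp{2}{\bw}^2$ with $g\in L^1(0,T)$, its integrability coming from $\grad\bu\in L^2(0,T;L^2)$. Gr\"onwall then controls $\bw$ in $L^\infty(0,T;L^2)$ by $\normLp{2}{\bw(0)}$, and integrating the inequality gives the $L^2(0,T;H^2)$ bound; setting $\bu_0=\bv_0$ yields uniqueness.

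The main obstacle is the very first step. For the genuine $2$D KSE the nonlinearity $\lp(\bu\cdot\grad)\bu,\bu\rp$ enjoys no cancellation and in fact drives the large-scale instability, and the calmed term inherits this defect since $\bee(\bu)$ is not divergence free. The whole argument rests on trading that absent cancellation for the $L^\infty$ bound on $\bee$ provided by Condition~\ref{eta_cond_bdd}, after which the fourth-order dissipation $\normLp{2}{\lap\bu}^2$ is strong enough to close every estimate.
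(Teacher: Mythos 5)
Your proposal is correct and takes essentially the same route as the paper: Galerkin approximation, $m$-uniform energy estimates obtained by trading the absent cancellation in $\lp(\bee(\bu)\cdot\grad)\bu,\bu\rp$ for the $L^\infty$ bound on $\bee$ together with \eqref{grad_interp} and Young's inequality, an $H^{-2}$ bound on $\dt\bu^m$, Aubin--Lions compactness, and the identical splitting $\lp(\bee(\bu)-\bee(\bv))\cdot\grad\rp\bu + (\bee(\bv)\cdot\grad)\bw$ with the pointwise Lipschitz bound, Ladyzhenskaya, and Gr\"onwall for uniqueness and continuous dependence. The only notable variation is technical: you take strong convergence in $L^2(0,T;H^1)$ from Aubin--Lions and pass to the limit in the nonlinearity directly (leaving the routine $Q_m$-projection term and the attainment of $\bu(0)=\bu_0$ slightly more implicit), whereas the paper uses strong convergence in $C([0,T];L^2)$ together with the continuity of the functional $I_{\bu,\bw}$ of Lemma \ref{eta_lemma_Lip} and a test-function-in-time argument for the initial data; both are valid instances of the same method.
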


\begin{proof} 

First, we show that a solution exists using Galerkin approximation.
Given $\bu_0 \in L^2(\nT^2)$, suppose $\bu^m \in C([0,T_m]; H_m)$
is a solution to \eqref{cKSE_Galerkin} on the interval $[0,T_m]$ for some $T_m>0$ with initial data $\bu_0^m = P_m\bu_0$. We take the inner product of \eqref{cKSE_Galerkin} with $\bu^m$ to obtain
\begin{align*}
    \frac{1}{2}\frac{d}{dt}\normLp{2}{\bu^m}^2 +
    \normLp{2}{\lap\bu^m}^2 &= -
    \lp \lap \bu^m , \bu^m \rp - 
    \lp (\bee\lp\bu^m\rp\cdot\nabla)\bu^m , \bu^m \rp
\end{align*}
We estimate the first term by $-\lp \lap \bu^m , \bu^m \rp \leq \frac{1}{4} \normLp{2}{\lap\bu^m}^2 +   \normLp{2}{\bu^m}^2$. For the nonlinear term, we estimate
\begin{align*}
    |\lp \bee\lp\bu^m\rp\cdot\nabla)\bu^m , \bu^m \rp|
    &\leq 
    \normLp{\infty}{\bee(\bu^m)}
    \normLp{2}{\nabla \bu^m}
    \normLp{2}{\bu^m}
    \\ &\leq 
    \normLp{\infty}{\bee}
    \normLp{2}{\bu^m}^\frac{1}{2}
    \normLp{2}{\lap\bu^m}^\frac{1}{2}
    \normLp{2}{\bu^m}
    \\ & \leq 
    \frac{3}{4}
    \normLp{\infty}{\bee}^\frac{4}{3}
    \normLp{2}{\bu^m}^2 +
    \frac{1}{4}
    \normLp{2}{\lap\bu^m}^2
\end{align*}
Combining the above estimates and denoting $K_{\epsilon} := \frac{3}{2} \normLp{\infty}{\bee}^\frac{4}{3} + 2$, we obtain
\begin{align}\label{gal_est1}
    \frac{d}{dt}\normLp{2}{\bu^m}^2 +
    \normLp{2}{\lap\bu^m}^2 
    &\leq
    K_{\epsilon}
    \normLp{2}{\bu^m}^2.
\end{align}
After dropping the second term of \eqref{gal_est1}, Gr\"onwall's inequality yields for all $t \in [0, T_m]$,
\begin{align}
    \normLp{2}{\bu^m(t)}^2 \leq 
    e^{K_{\epsilon} t}
    \normLp{2}{\bu^m(0)}^2 \leq 
    e^{K_{\epsilon} T_m}
    \normLp{2}{\bu_0}^2.
\end{align}


Since $\bu^m \in C([0,T_m], \nT^2)$, via a bootstrapping argument, 
it holds that for any $T>0$ and any $t\in[0,T]$, 
\begin{align}\label{gal_Linf_L2}
    \normLp{2}{\bu^m(t)}^2 \leq 
    e^{K_{\epsilon} t}
    \normLp{2}{\bu_0}^2
    \leq 
    e^{K_{\epsilon} T}
    \normLp{2}{\bu_0}^2.
\end{align}
Next, we integrate \eqref{gal_est1} on $[0,T]$ and apply Estimate \eqref{gal_Linf_L2}:
\begin{align}
    &\quad 
    \normLp{2}{\bu^m(T)}^2 + 
    \frac{1}{2}\int_0^T \normLp{2}{\lap \bu^m(s)}^2 ds
    \\ &\leq \notag
    \int_0^T K_{\epsilon} \normLp{2}{\bu^m(s)}^2ds + 
    \normLp{2}{\bu^m(0)}^2
    \\ &\leq \notag
    \int_0^T K_{\epsilon} e^{K_{\epsilon} s}
    \normLp{2}{\bu_0}^2 ds + 
    \normLp{2}{\bu_0}^2
    \\ &= \notag
     e^{K_{\epsilon} T}
    \normLp{2}{\bu_0}^2.
\end{align}
%
Hence, for all $T>0$,
\begin{align}\label{LiL2_L2H2_Galerkin_bound}
    \set{\bu^m}_{m=1}^\infty\text{ is bounded in }L^\infty([0,T];L^2)\cap
    L^2([0,T];H^2).
\end{align}
To bound the time derivative, we estimate
\begin{align*}
    \normHs{-2}{\dt \bu^m} 
    &\leq 
    \normHs{-2}{\lap^2\bu^m} 
    +
    \normHs{-2}{\lap\bu^m}
    + 
    \sup_{\substack{ \phi\in H^2 \\ \normHs{2}{\phi} =1 }}{\abs{\left\langle  P_m\lp(\boldeta^\epsilon(\bu^m)\cdot\nabla)\bu^m \rp, \phi \right\rangle}}
    \\ &\leq 
    C_1\normHs{2}{\bu^m}
    +
    C_2\normLp{2}{\bu^m}
    +
    \sup_{\substack{ \phi\in H^2 \\ \normHs{2}{\phi} =1 }}{\abs{\left\langle  \boldeta^\epsilon(\bu^m)\cdot\nabla)\bu^m , P_m\lp \phi \rp \right\rangle}}
    \\ &\leq 
    C\normHs{2}{\bu^m}
    +
    C\normLp{2}{\bu^m}
    +
    \sup_{\substack{ \phi\in H^2 \\ \normHs{2}{\phi} =1 }}
    { 
    \normLp{\infty}{\boldeta^\epsilon(\bu^m)}
    \normHs{1}{\bu^m}\normLp{2}{\phi}
    }
    \\ &\leq
    C\normHs{2}{\bu^m}
    +
    C\normLp{2}{\bu^m}
    +
    \normLp{\infty}{\bee}
    \normHs{1}{\bu^m}.
\end{align*}
Hence, $\set{\partial_t\bu^m}_{m=1}^\infty$ is bounded in $L^2(0,T; H^{-2}(\nT^2))$. 
By the Banach-Alaoglu Theorem, there exists 
$\bu \in L^2(0,T; H^2(\mathbb{T}^2)) \cap L^\infty(0,T; L^2(\mathbb{T}^2))$ 
and a subsequence (which we will still label as $\bu^m$) such that
\begin{align}
    \label{BA_sseq_0}         
    \bu^m &\rightharpoonup^* \bu \text{ weak-* in } L^\infty(0,T; L^2(\mathbb{T}^2)), \\
    \label{BA_sseq_1}         
    \bu^m &\rightharpoonup \bu \text{ weakly in } L^2(0,T; H^2(\mathbb{T}^2)), \\
    \label{BA_dt_sseq}      
    \dt{}\bu^m &\rightharpoonup \dt{}\bu \text{ weakly in } L^2(0,T; H^{-2}(\nT^2)).
\end{align}
Moreover, by the Aubin-Lions Lemma we may pass to another subsequence, relabeled to be $\bu^m$, such that 
\begin{align}
    \label{AL_sseq} \bu^m &\to \bu \text{ strongly in } C(0,T; L^2(\mathbb{T}^2)).
\end{align}
Now we can show that $\bu$ is a weak solution to \eqref{cKSE}. Given $\bw\in L^2(0,T;H^2(\mathbb{T}^2))$, we compute
\begin{align*}
    &\quad\lp\left\langle \dt{} \bu , \bw \right\rangle +
    \lp (\boldeta^\epsilon(\bu)\cdot\nabla)\bu, \bw \rp +
    \lp \triangle\bu, \bw \rp +
    \lp \triangle \bu, \lap \bw \rp\rp \\ &\qquad -
    \lp\left\langle \dt{} \bu^m , \bw \right\rangle +
    \lp P_m\lp(\boldeta^\epsilon(\bu^m)\cdot\nabla)\bu^m\rp, \bw \rp +
    \lp \triangle\bu^m, \bw \rp +
    \lp \triangle \bu^m, \lap \bw \rp \rp 
    \\ &=
    \left\langle \dt{} \lp \bu -\bu^m \rp , \bw \right\rangle +
    \lp \triangle\lp\bu - \bu^m\rp, \bw \rp +
    \lp \triangle \lp\bu - \bu^m\rp, \lap \bw \rp \\ &\qquad +
    \lp (\boldeta^\epsilon(\bu)\cdot\nabla)\bu, \bw \rp -
    \lp P_m\lp(\boldeta^\epsilon(\bu^m)\cdot\nabla)\bu^m\rp, \bw \rp
    \\ &=
    \left\langle \dt{} \lp \bu -\bu^m \rp , \bw \right\rangle +
    \lp \triangle\lp\bu - \bu^m\rp, \bw \rp +
    \lp \triangle \lp\bu - \bu^m\rp, \lap \bw \rp \\ &\qquad +
    \lp (\boldeta^\epsilon(\bu)\cdot\nabla)\bu, \bw \rp -
    \lp (\boldeta^\epsilon(\bu^m)\cdot\nabla)\bu^m, \bw \rp +
    \lp Q_m\lp(\boldeta^\epsilon(\bu^m)\cdot\nabla)\bu^m\rp, \bw \rp
    \\ &=
    \left\langle \dt{} \lp \bu -\bu^m \rp , \bw \right\rangle +
    \lp \triangle\lp\bu - \bu^m\rp, \bw \rp +
    \lp \triangle \lp\bu - \bu^m\rp, \lap \bw \rp \\ &\qquad +
    \lp (\lp\boldeta^\epsilon(\bu) - \boldeta^\epsilon(\bu^m) \rp\cdot\nabla)\bu^m, \bw \rp +
    \lp (\boldeta^\epsilon(\bu)\cdot\nabla)\lp\bu - \bu^m \rp, \bw \rp \\ &\qquad +
    \lp \lp(\boldeta^\epsilon(\bu^m)\cdot\nabla)\bu^m\rp, Q_m\bw \rp.
    \\ &:=
    \sum_{k=1}^6 I_k.
\end{align*}
Integrate $\sum_{k=1}^6 I_k$ in time for $t\in [0, T]$. We observe that $I_1, I_2,$ and $I_3$ all vanish as $m\to \infty$ by \eqref{BA_sseq_0}, \eqref{BA_sseq_1}, and \eqref{BA_dt_sseq}. 
Using Condition \ref{eta_cond_Lip} of Definition \ref{eta_def}, Agmon's inequality, Ladyzhenskaya's inequality, and H\"older's inequality, we obtain
\begin{align} \label{GC_I4}
    \int_0^T I_4 dt  &\leq 
    \int_0^T
    \normLp{2}{\bee(\bu) - 
    \bee(\bu^m)}
    \normLp{2}{\grad \bu^m}
    \normLp{\infty}{\bw}
    dt
     \\ &\leq 
    C \int_0^T 
    \normLp{2}{\bu - \bu^m}
    \normLp{2}{\bu^m}^\frac{1}{2}
    \normLp{2}{\lap \bu^m}^\frac{1}{2}
    \normLp{2}{\bw}^\frac{1}{2}
    \normHs{2}{\bw}^\frac{1}{2} dt \notag \\ &\leq 
    C \| \bu - \bu^m \|_{L^\infty(0,T;L^2)}^\frac{1}{2}
    \| \bu^m \|_{L^\infty(0,T;L^2)}^\frac{1}{2} 
    \notag \\ &\quad \times
    \int_0^T 
    \normLp{2}{\bu - \bu^m}^\frac{1}{2}
    \normLp{2}{\lap \bu^m}^\frac{1}{2}
    \normHs{2}{\bw} dt 
    \notag \\ &\leq 
    C \| \bu - \bu^m \|_{L^\infty(0,T;L^2)}^\frac{1}{2}
    \| \bu^m \|_{L^\infty(0,T;L^2)}^\frac{1}{2} 
    \notag \\ \notag&\quad \times 
    \| \lap \bu^m \|_{L^2(0,T;L^2)}^\frac{1}{2} 
    \| \bw\|_{L^2(0,T;H^2)}
    \| \bu - \bu^m \|_{L^2(0,T;L^2)}^\frac{1}{2}, 
\end{align}
which is bounded due to \eqref{gal_Linf_L2}, \eqref{BA_sseq_1}, and \eqref{AL_sseq}. \\
For $I_5$, 
\begin{align} \label{GC_I5}
    \int_0^T I_5 dt = I_{\bu,\bw}(\bu - \bu^m)    
\end{align}
for $I_{\bu, \bw}$ as defined in \eqref{LinFunc}, which convergences due to Lemma \ref{eta_lemma_Lip}. Finally, using H\"older's inequality, Condition \ref{eta_cond_bdd} of Definition \ref{eta_def}, and \eqref{QNproj},
\begin{align} \label{GC_I6}
    \int_0^T I_6 dt
    &\leq
    \int_0^T \normLp{\infty}{\bee(\bu^m)} \normLp{2}{\grad \bu^m} \normLp{2}{Q_m\bw} dt 
    \\ & \leq
    \normLp{\infty}{\bee}
    \lp \int_0^T \normLp{2}{\grad \bu^m}^2 dt \rp^{\frac{1}{2}}
    \lp \int_0^T \normLp{2}{Q_m\bw}^2 dt \rp^{\frac{1}{2}} 
    \notag \\ & \leq
    \normLp{\infty}{\bee} \lp \int_0^T \normLp{2}{\grad \bu^m}^2 dt \rp^{\frac{1}{2}}
    \lp \int_0^T \frac{1}{m^4} \normHs{2}{\bw}^2 dt \rp^{\frac{1}{2}}
    \notag \\ & \leq
    \normLp{\infty}{\bee} \| \bu^m \|_{L^2(0,T;H^2)} 
    \|{\bw}\|_{L^2(0,T;H^2)}  \frac{1}{m^2}, 
    \notag
\end{align}
which converges to zero by \eqref{LiL2_L2H2_Galerkin_bound}. \\
Invoking \eqref{BA_sseq_1}, \eqref{BA_dt_sseq}, \eqref{AL_sseq}, \eqref{GC_I4}, \eqref{GC_I5}, and \eqref{GC_I6}, 
\begin{align*}
 \lim_{m\to\infty} \int_0^T\lp \sum_{k=1}^6 I_k\rp dt = 0.
\end{align*}
Therefore solutions to the ODE system \eqref{cKSE_Galerkin} converge to a solution of the PDE system \eqref{cKSE}.
Thus $\bu$ is indeed a solution to $\eqref{cKSE}$. 

Now we show that the solution $\bu$ satisfies $\bu(0) = \bu_0$ in the sense of $C([0,T],L^2)$.
Applying Lemma 1.1 from Chapter 3 of \cite[p. 250]{Temam_2001_Th_Num}, for all $\bv\in H^2(\nT^2)$, it follows that
   \begin{align}\label{dist_eq}
       \la \dt\bu , \bv \ra = 
       \frac{d}{dt}\lp \bu , \bv \rp = -
       \lp \lap\bu , \bv \rp -
       \lp \lap\bu, \lap \bv \rp -
       \lp \bee(\bu)\cdot\grad\bu, \bv \rp
   \end{align}
in the scalar distribution sense on $[0,T]$. Now, suppose that $\psi\in C^1([0,T])$ and satisfies $\psi(0) = 1$,  $\psi(T) = 0$. We then integrate \eqref{dist_eq} in time with $\psi$ and apply integration by parts to obtain
   \begin{align} \label{ini_data_check0}
    \int_0^T \lp \bu , \bv \rp \psi'(t) dt = &-
    \int_0^T \lp \lap\bu , \bv \rp \psi(t) dt -
    \int_0^T \lp \lap\bu, \lap \bv \rp \psi(t) dt \\ &-
    \int_0^T \lp \bee(\bu)\cdot\grad\bu, \bv \rp \psi(t)  dt + \lp \bu(0), \bv\rp. \notag
\end{align}
On the other hand, if we take the inner product of \eqref{cKSE_Galerkin} with $\bv$ then integrate in time with $\psi$ we obtain
\begin{align*}
    \int_0^T \lp \bu^m , \bv \rp \psi'(t) dt = &-
    \int_0^T \lp \lap\bu^m , \bv \rp \psi(t) dt -
    \int_0^T \lp \lap\bu^m, \lap \bv \rp \psi(t) dt 
    \\ & 
    -\int_0^T \lp P_m \lp \bee(\bu^m)\cdot\grad\bu^m \rp, \bv \rp \psi(t)  dt + \lp \bu^m_0, \bv\rp.
\end{align*}
Passing to the limit as $m\rightarrow\infty$ then yields
\begin{align} \label{ini_data_check1}
    \int_0^T \lp \bu , \bv \rp \psi'(t) dt = &-
    \int_0^T \lp \lap\bu , \bv \rp \psi(t) dt -
    \int_0^T \lp \lap\bu, \lap \bv \rp \psi(t) dt \\ &-
    \int_0^T \lp \bee(\bu)\cdot\grad\bu, \bv \rp \psi(t)  dt + \lp \bu_0, \bv\rp. \notag
\end{align}
By then comparing \eqref{ini_data_check0} and \eqref{ini_data_check1}, we obtain $\lp \bu(0) - \bu_0, \bv \rp = 0$ for all $\bv\in H^2(\nT^2)$. Since $H^2(\nT^2)$ is dense in $L^2(\nT^2)$, it follows that $\lp \bu(0) - \bu_0, \bv \rp = 0$ for all $\bv\in L^2(\nT^2)$. Thus $\bu$ satisfies $\bu(0) = \bu_0$.
Next, we show that weak solutions are unique. Set $\bw = \bu - \bv$, where $\bu$ and $\bv$ are both weak solutions of calmed KSE $\eqref{cKSE}$ on the interval $[0,T]$ with $\bu_0 = \bv_0$. 
After taking the difference of the two equations, we then take the action of the difference equation with $w$, which yields
\begin{align} \label{diff_est1}
    &\quad
    \frac{1}{2}\frac{d}{dt}\normLp{2}{\bw}^2 +
    \normLp{2}{\lap\bw}^2 
    \\&= - \notag
    \lp \lap \bw, \bw\rp -
    \lp \lp\bee\lp\bu\rp\cdot\grad \rp \bu , \bw \rp +
    \lp \lp\bee\lp\bv\rp\cdot\grad \rp \bv , \bw \rp 
    \\ &= \notag
    -\lp \lap \bw, \bw\rp +
    \lp \lp \lp \bee\lp\bv\rp - \bee\lp\bu\rp \rp \cdot\grad \rp \bu , \bw \rp -
    \lp \lp\bee\lp\bv\rp\cdot\grad \rp \bw , \bw \rp 
    \\ &= J_1 + J_2 + J_3, \notag
\end{align}
where we have used the Lions-Magene Lemma to write 
$\left\langle \dt \bw, \bw \right\rangle = \frac{1}{2}\frac{d}{dt}\normLp{2}{\bw}^2$. Then,
\begin{align*}
    J_1 \leq 
    \frac{1}{2}\normLp{2}{\bw}^2 +
    \frac{1}{2}\normLp{2}{\lap \bw}^2.
\end{align*}
Also,
\begin{align*}
    J_2 &:=   \lp \lp \lp \bee\lp\bv\rp - \bee\lp\bu\rp \rp \cdot\grad \rp \bu , \bw \rp \\ &\leq 
    \normLp{4}{\bee(\bv) - \bee(\bu)}
    \normLp{2}{\grad \bu}
    \normLp{4}{\bw} \\ &\leq 
    C \normLp{2}{\grad \bu} \normLp{4}{\bw}^2 \\ &\leq 
    C \normLp{2}{\grad \bu} \normLp{2}{\bw}\normLp{2}{\grad\bw} \\ &\leq 
    C \normLp{2}{\grad \bu} \normLp{2}{\bw}^\frac{3}{2}\normLp{2}{\lap\bw}^\frac{1}{2} \\ &\leq
    C\normLp{2}{\grad \bu}^\frac{4}{3}\normLp{2}{\bw}^2 + \frac{1}{4}\normLp{2}{\lap\bw}^2
\end{align*}
by Agmon's inequality, Ladyzhenskaya's inequality, and Lemma \ref{eta_lemma_Lip}. Finally, 
\begin{align*}
    J_3 &:= -
    \lp \lp\bee\lp\bv\rp\cdot\grad \rp \bw , \bw \rp 
    \\ &\leq 
    \normLp{\infty}{\bee(\bv)}
    \normLp{2}{\grad \bw} 
    \normLp{2}{\bw}
    \\ &\leq 
    \lp
    \normLp{\infty}{\bee}
    \normLp{2}{\bw}^\frac{3}{2}
    \rp \lp
    \normLp{2}{\lap \bw}^\frac{1}{2}
    \rp
    \\ &\leq 
    \frac{3}{4}
    \normLp{\infty}{\bee}^\frac{4}{3}
    \normLp{2}{\bw}^2 + 
    \frac{1}{4}
    \normLp{2}{\lap \bw}^2
\end{align*}
using Young's inequality and \eqref{grad_interp}.
From the above estimates, we obtain 
\begin{align*}
    \frac{d}{dt}\normLp{2}{\bw(t)}^2 \leq 
    \lp
    1 + C \normLp{2}{\grad\bu}^\frac{4}{3} + \frac{3}{2}\normLp{\infty}{\bee} ^\frac{4}{3}
    \rp \normLp{2}{\bw(t)}^2.
\end{align*}
Writing $K_1(t) =1 + C \normLp{2}{\grad\bu}^\frac{4}{3} + \frac{3}{2}\normLp{\infty}{\bee} ^\frac{4}{3}$, we observe that $K(t)$ is integrable on $[0,T]$. Thus we conclude, recalling that $\bw = \bu-\bv$,
\begin{align}\label{Gronwall_uniqueness1}
    \normLp{2}{\bu(t)-\bv(t)}^2 \leq 
    \normLp{2}{\bu_0-\bv_0}^2 \exp{\lp\int_0^T K_1(t) dt\rp} .
\end{align}
Therefore solutions to \eqref{cKSE} are unique. 
If we now integrate \eqref{diff_est1} on the interval $[0,T]$ and apply estimate \eqref{Gronwall_uniqueness1}, we obtain
\begin{align}\label{Gronwall_uniqueness2}
    \int_0^T \normLp{2}{\lap \bu(t)- \lap \bv(t)}^2  dt \leq 
    K_2\normLp{2}{\bu_0-\bv_0}^2
\end{align}
For some constant $K_2$ depending on $T$, $\normLp{2}{\grad\bu}$, and $\normLp{\infty}{\bee}$. From estimates \eqref{Gronwall_uniqueness1} and \eqref{Gronwall_uniqueness2} we conclude that solutions depend continuously on the initial data in $L^\infty(0,T; L^2(\nT^2)) \cap L^2(0,T; H^2(\nT^2))$.
\end{proof}


 \section{Higher-Order Regularity of Solutions}\label{sec_smooth_ini_data}
\noindent
 In this section, we only work formally, but the results can be made rigorous by using, e.g., the Galerkin method.
We will show that the regularity of a weak solution $\bu$ to \ref{cKSE} is dependent on the regularity of the calming function $\bee$ and the initial data $\bu_0$.




\begin{theorem}
   Suppose that $\bee$ is is calming function which satisfies Conditions \ref{eta_cond_Lip}, and \ref{eta_cond_bdd} of Definition \ref{eta_def}.
   Let $m\in\{1,2\}$, and suppose that $\bu$ is a weak solution to \eqref{cKSE} on $[0,T]$ for some $T>0$. If $\bu_0 \in H^m(\T)$, then $\bu \in L^\infty(0,T; H^m(\nT^2))\cap L^2(0,T;H^{m+2}(\T))$.
\end{theorem}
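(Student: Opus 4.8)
The plan is to establish the claim by formal higher-order energy estimates, bootstrapping from the base regularity $\bu\in L^\infty(0,T;L^2)\cap L^2(0,T;H^2)$ already provided by Theorem \ref{Ex_and_Uni}; as the section header notes, each estimate is made rigorous by performing it on the Galerkin approximants from the proof of Theorem \ref{Ex_and_Uni} and passing to the limit using weak lower semicontinuity of the norms. For each $m\in\{1,2\}$ I would test \eqref{cKSE_EQ} against $(-\lap)^m\bu$, i.e.\ against $-\lap\bu$ when $m=1$ and against $\lap^2\bu$ when $m=2$. After integrating by parts, the biharmonic term yields the top-order dissipation ($\normLp{2}{\grad\lap\bu}^2$ for $m=1$, $\normLp{2}{\lap^2\bu}^2$ for $m=2$), the time derivative yields $\tfrac12\ddt\normLp{2}{\grad\bu}^2$ (resp.\ $\tfrac12\ddt\normLp{2}{\lap\bu}^2$), and the anti-dissipative term $\lap\bu$ yields a sign-indefinite intermediate contribution, namely $\normLp{2}{\lap\bu}^2$ (resp.\ $\normLp{2}{\grad\lap\bu}^2$).

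Next I would dispose of the two non-dissipative contributions. The intermediate term is absorbed into the top-order dissipation by the interpolation inequality \eqref{grad_interp} applied one level up (with $\grad\bu$, resp.\ $\lap\bu$, in place of $\bu$), at the cost of a strictly lower-order term already controlled by the previous bootstrap step. For the nonlinear term I would pair the top-order test factor against the advective term and apply Cauchy-Schwarz, using $\normLp{2}{(\bee(\bu)\cdot\grad)\bu}\le\normLp{\infty}{\bee(\bu)}\normLp{2}{\grad\bu}\le\normLp{\infty}{\bee}\normLp{2}{\grad\bu}$, where Condition \ref{eta_cond_bdd} supplies the finite bound $\normLp{\infty}{\bee}$. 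A single interpolation together with Young's inequality then absorbs the top-order factor into the dissipation and leaves a remainder bounded by $C\normLp{\infty}{\bee}^{4/3}\normLp{2}{\grad\bu}^2$ when $m=1$, and by $C\normLp{\infty}{\bee}^{2}\normLp{2}{\grad\bu}^2$ when $m=2$.

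The result in each case is a differential inequality of the form $\ddt y + (\text{dissipation}) \le C\,y + g$, where $y=\normLp{2}{\grad\bu}^2$ (resp.\ $\normLp{2}{\lap\bu}^2$), $C$ is a constant, and $g\in L^1(0,T)$. For $m=1$ one has $g\equiv0$ and only the base regularity is needed; Gr\"onwall's inequality then yields the $L^\infty(0,T;H^1)$ bound, and integrating the dissipation in time yields the $L^2(0,T;H^3)$ bound. For $m=2$ the remainder $g(t)=C\normLp{\infty}{\bee}^2\normLp{2}{\grad\bu(t)}^2$ is controlled by the $H^1$ bound just obtained (hence $g\in L^\infty(0,T)\subset L^1(0,T)$), and the same Gr\"onwall-then-integrate argument gives the $L^\infty(0,T;H^2)$ and $L^2(0,T;H^4)$ bounds. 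This forces the cases to be ordered $m=1$ before $m=2$.

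The main structural point, which is also precisely why the statement stops at $m=2$, is that for $m\le 2$ the advective term can be estimated by loading every extra derivative onto the test function $(-\lap)^m\bu$ and bounding the remaining factor by $\normLp{\infty}{\bee}\normLp{2}{\grad\bu}$, so that $\bee(\bu)$ is never differentiated. Since $\bee$ is only assumed Lipschitz (Condition \ref{eta_cond_Lip}) and bounded (Condition \ref{eta_cond_bdd}), this is essential: for $m\ge3$ the pairing $((\bee(\bu)\cdot\grad)\bu,(-\lap)^m\bu)$ would force an integration by parts that differentiates $\bee(\bu)$, requiring control of derivatives of the calming function that are simply not assumed. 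Thus the only real obstacle is handling the nonlinearity without differentiating $\bee$, and the boundedness of $\bee$ is exactly what removes it for $m\in\{1,2\}$.
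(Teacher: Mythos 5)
Your proof is correct, and its skeleton is the same as the paper's: for each $m\in\{1,2\}$ one tests \eqref{cKSE_EQ} with $(-\lap)^m\bu$, absorbs the sign-indefinite Laplacian contribution into the hyperdissipation, estimates the nonlinearity without ever differentiating $\bee(\bu)$, and then applies Gr\"onwall's inequality and integrates the dissipation in time; your $m=1$ case is essentially identical to the paper's, down to the coefficient $C\normLp{\infty}{\bee}^{4/3}$. The one place you genuinely diverge is the nonlinear estimate for $m=2$: you invoke Condition \ref{eta_cond_bdd} to get $\abs{\lp(\bee(\bu)\cdot\grad)\bu,\lap^2\bu\rp}\le\normLp{\infty}{\bee}\normLp{2}{\grad\bu}\normLp{2}{\lap^2\bu}$ and then Young's inequality, which leaves the inhomogeneous forcing $g(t)=C\normLp{\infty}{\bee}^{2}\normLp{2}{\grad\bu(t)}^2$ and makes your $m=2$ case lean on the $m=1$ bound (in fact $g\in L^1(0,T)$ already follows from the weak-solution regularity $\bu\in L^2(0,T;H^2)$, so this ordering is a convenience rather than a necessity). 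The paper instead estimates $\normLp{4}{\bee(\bu)}\normLp{4}{\grad\bu}\normLp{2}{\lap^2\bu}$, bounds $\normLp{4}{\bee(\bu)}\le C\normLp{4}{\bu}$ via the Lipschitz property (Lemma \ref{eta_lemma_Lip}), and applies Ladyzhenskaya's inequality, arriving at the homogeneous Gr\"onwall coefficient $\tfrac12+C\normLp{2}{\bu}^2$. What each buys: your route is more elementary (Cauchy--Schwarz plus Young, no Ladyzhenskaya) but uses the boundedness of $\bee$; the paper's $m=2$ argument uses only Condition \ref{eta_cond_Lip} and the $L^\infty(0,T;L^2)$ bound of the weak solution, so the two cases remain logically independent of one another. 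One caution on your closing structural remark: while the theorem indeed stops at $m=2$, a Lipschitz $\bee$ does give $\bee(\bu)$ one almost-everywhere derivative with $\abs{\grad(\bee(\bu))}\le\abs{\grad\bu}$ by the chain rule, so $m\ge 3$ is not clearly obstructed --- the paper's own remark says higher-order regularity ``seems likely'' but is not pursued; this does not affect the validity of your proof of the stated result.
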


\begin{remark}
    It seems likely that higher-order regularity ($m>2$) also holds, but we do not pursue such matters here.
\end{remark}

\begin{proof}
    We first show the case $m=1$.
    We take the (formal) inner product of \eqref{cKSE} with $-\lap\bu$ and integrate by parts to obtain
    \begin{align*}
    \lp \dt{\bu},-\lap\bu \rp - 
    \lp (\boldeta^\epsilon(\bu)\cdot\nabla)\bu, \lap\bu\rp -
    \lp \lap\bu, \lap\bu \rp -
    \lp \lap^2\bu, \lap\bu \rp = 0
    \end{align*}
    which we will rewrite as 
    \begin{align*}
        \frac{1}{2}\frac{d}{dt}\normLp{2}{\grad\bu}^2 + 
        \normLp{2}{\grad\lap\bu}^2 &= 
        \lp \lp(\boldeta^\epsilon(\bu)\cdot\nabla)\bu \rp, \lap\bu\rp - 
        \lp \grad\bu, \grad\lap\bu \rp \\ &=
        \lp(\boldeta^\epsilon(\bu)\cdot\nabla)\bu, \lap\bu\rp - 
        \lp \grad\bu, \grad\lap\bu \rp.
    \end{align*}
    Thus, we obtain 
    \begin{align*}
    &\quad
        \frac{1}{2}\frac{d}{dt}\normLp{2}{\grad\bu}^2 + 
        \normLp{2}{\grad\lap\bu}^2 
        \\ &\leq \notag
        \normLp{\infty}{\boldeta^\epsilon(\bu)}
        \normLp{2}{\grad\bu}\normLp{2}{\lap\bu} +
        \normLp{2}{\grad\bu}\normLp{2}{\grad\lap\bu} 
        \\ &\leq \notag
        \normLp{\infty}{\bee} 
        \normLp{2}{\grad\bu}^\frac{3}{2}
        \normLp{2}{\grad\lap\bu}^\frac{1}{2} 
        +
        \normLp{2}{\grad\bu}\normLp{2}{\grad\lap\bu} 
        \\ &\leq \notag
        \lp \frac{3}{4} \normLp{\infty}{\bee}^{\frac{4}{3}} + \frac{1}{2} \rp \normLp{2}{\grad\bu}^2 + 
        \frac{3}{4}\normLp{2}{\grad\lap\bu}^2.
    \end{align*}
    This estimate can then be rewritten as 
    \begin{align}\label{H1_bd1}
        \frac{d}{dt}\normLp{2}{\grad\bu}^2 + 
        \frac{1}{2}\normLp{2}{\grad\lap\bu}^2 \leq 
        \lp
        \frac{3}{2}\normLp{\infty}{\bee}^\frac{4}{3} + 1
        \rp \normLp{2}{\grad\bu}^2.
    \end{align}
    Then, by Gr\"onwall's inequality, 
    \begin{align}\label{H1_bd2}
        \normLp{2}{\grad\bu(t)}^2 
       &\leq
        \normLp{2}{\grad\bu_0}^2
        \exp{\lp \frac{3}{2} t \normLp{\infty}{\bee}^{\frac{4}{3}} + t\rp}  \\&\leq\notag
        \normLp{2}{\grad\bu_0}^2
        \exp{\lp \frac{3}{2} T \normLp{\infty}{\bee}^{\frac{4}{3}} + T\rp}
    \end{align}
    Now, after integrating \eqref{H1_bd1} on the interval $[0,T]$ and applying Estimate \eqref{H1_bd2}, it follows that
    \begin{align}
        \int_0^T\normLp{2}{\grad\lap\bu(\tau)}^2  d\tau \leq
        2 \normLp{2}{\grad\bu_0}^2
        \exp{\lp \frac{3}{2} T \normLp{\infty}{\bee}^{\frac{4}{3}} + T\rp}.
    \end{align}
    Thus, $\bu \in L^2(0,T;H^3(\nT^2))\cap L^\infty(0,T;H^1(\T))$ whenever $\bu_0 \in H^1(\nT^2)$. \\

    The case $m = 2$ proceeds in a similar way. We take the inner product with $\lap^2 \bu$ to obtain 
    \begin{align}
    &\quad
        \frac{1}{2}\frac{d}{dt}\normLp{2}{\lap \bu}^2 + \normLp{2}{\lap^2\bu}^2 
        \\& \leq 
        \abs{\lp \lap\bu, \lap^2\bu \rp } +
        \abs{\lp \lp\bee(\bu)\cdot \grad\rp\bu , \lap^2 \bu \rp} \notag \\ &\leq 
        \frac{1}{2}\normLp{2}{\lap \bu}^2 +
        \frac{1}{2}\normLp{2}{\lap^2 \bu}^2 + 
        \normLp{4}{\bee(\bu)}\normLp{4}{\grad\bu}\normLp{2}{\lap^2 \bu }  \notag \\ &\leq 
        \frac{1}{2}\normLp{2}{\lap \bu}^2 +
        \frac{1}{2}\normLp{2}{\lap^2 \bu}^2 +
        C\normLp{4}{\bu}^2\normLp{4}{\grad\bu}^2 + 
        \frac{1}{4}\normLp{2}{\lap^2 \bu}^2  \notag \\ &\leq 
        \frac{1}{2}\normLp{2}{\lap \bu}^2 +
        \frac{3}{4}\normLp{2}{\lap^2 \bu}^2 +
        C\normLp{2}{\bu}\normLp{2}{\grad\bu}^2\normLp{2}{\lap\bu} \notag 
        \\ &\leq \notag 
        \lp \frac{1}{2} + C\normLp{2}{\bu}^2 \rp
        \normLp{2}{\lap\bu}^2 +
        \frac{3}{4}\normLp{2}{\lap^2 \bu}^2.
    \end{align}
    Similar to the case $m=1$, this estimate reveals that 
    $\bu \in L^\infty(0,T; H^2(\nT^2))\cap L^2(0,T; H^4(\nT^2))$ whenever $\bu_0 \in H^2(\nT^2)$.
\end{proof}
 \section{Convergence to Kuramoto-Sivashinsky Solutions}\label{Convergence}
 
 
 \noindent
It is known that, for any initial data $\bu_0 \in L^2(\nT^2)$,  solutions to 2D KSE exist and are unique in $C([0, T]; L^2(\T)) \cap L^2(0, T; H^2(\T))$ for some (possibly only small) $T>0$ (see, e.g.,  \cite{Biswas_Swanson_2007_KSE_Rn,Feng_Mazzucato_2021}).
In this section we show that as $\epsilon\rightarrow 0$, solutions $\bu^\epsilon$ of the calmed KSE \eqref{cKSE} converge to solutions $\bu$ of KSE \eqref{KSE} prior to its potential blowup time. For this result, it seems necessary that our calming function $\bee$ satisfies Condition \ref{eta_cond_conv} of Definition \ref{eta_def}. Indeed, if one wants to show that $\lp\bee(\bu^\epsilon) \cdot \grad \rp \bu^\epsilon \to 
\lp \bu \cdot \grad \rp \bu$ in some sense as $\epsilon \to 0$, then one expects that at least $\bee(\bx) \to \bx$ as $\epsilon \to 0$. We do not find this imposition to be restrictive, as our example choices for $\bee$ satisfy this condition, as seen in Proposition \ref{eta_cond_prop}.

\begin{theorem}\label{conv}
    Given $\bu_0 \in L^2(\T)$, let		
    \begin{align}
			\bu \in C([0, T]; L^2(\T)) \cap L^2(0, T; H^2(\T)).
		\end{align}
    be the corresponding weak solution of \eqref{KSE} with maximal time of existence and uniqueness $T^*>0$ and with $T\in(0,T^*)$.
    Suppose $\bee$ satisfies \ref{eta_cond_Lip} and \ref{eta_cond_bdd} of Definition \ref{eta_def}. Furthermore, suppose $\bee$ satisfies Condition \ref{eta_cond_conv}, so that \eqref{pwise_conv} holds for some fixed $C, \alpha > 0$ and any $\beta \in [0,3]$.
    Let $\bu^\epsilon$ be the corresponding weak solution of \eqref{cKSE} with calming function $\bee$ and initial data $\bu_0$. 
    Then for any $\epsilon>0$, it holds that 
    \begin{align}
        \| \bu^\epsilon - \bu\|_{L^\infty(0,T;L^2)} & \leq 
        K \epsilon^\alpha, \\
        \| \bu^\epsilon - \bu\|_{L^2(0,T;H^2)} & \leq 
        K' \epsilon^\alpha, 
    \end{align}
    where $K, K' >0$ depend on $T$, $\beta$, and various norms of $\bu$, but not on $\epsilon$ or $\alpha$.
\end{theorem}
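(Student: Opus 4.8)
The plan is to set $\bw := \bu^\epsilon - \bu$ and derive a differential inequality for $\normLp{2}{\bw}^2$ that closes by a continuation (bootstrap) argument in time. A feature I would exploit from the outset is that the two solutions share the same initial data, so $\bw(0) = \mathbf{0}$; this is what forces a genuine $O(\epsilon^\alpha)$ error rather than a merely bounded one. Subtracting \eqref{KSE} from \eqref{cKSE_EQ}, the difference solves $\dt\bw + \lap\bw + \lap^2\bw = -N$ with $N := (\bee(\bu^\epsilon)\cdot\grad)\bu^\epsilon - (\bu\cdot\grad)\bu$. Testing against $\bw$ and using $(\lap^2\bw,\bw) = \normLp{2}{\lap\bw}^2$ and $(\lap\bw,\bw) = -\normLp{2}{\grad\bw}^2$ gives
\[
\tfrac12\ddt\normLp{2}{\bw}^2 + \normLp{2}{\lap\bw}^2 = \normLp{2}{\grad\bw}^2 - (N,\bw),
\]
where the linear term $\normLp{2}{\grad\bw}^2$ is disposed of immediately by \eqref{grad_interp}, contributing $C\normLp{2}{\bw}^2 + \tfrac18\normLp{2}{\lap\bw}^2$.

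The heart of the matter is $(N,\bw)$, which I would split as $N = (\bee(\bu^\epsilon)\cdot\grad)\bw + ((\bee(\bu^\epsilon) - \bee(\bu))\cdot\grad)\bu + ((\bee(\bu) - \bu)\cdot\grad)\bu$, so that $(N,\bw) = A + B + C$. The term $A = ((\bee(\bu^\epsilon)\cdot\grad)\bw,\bw)$ is the main obstacle: estimated directly it carries the factor $\normLp{\infty}{\bee}\sim\epsilon^{-1}$, which would be fatal. The key structural device is to integrate by parts, giving $A = -\tfrac12\int(\grad\cdot\bee(\bu^\epsilon))|\bw|^2$; since $\bee$ is Lipschitz with constant $1$ (Condition \ref{eta_cond_Lip}), its Jacobian has operator norm $\le 1$, so $|\grad\cdot\bee(\bu^\epsilon)|\le C|\grad\bu^\epsilon|$ pointwise with $C$ independent of $\epsilon$. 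Writing $\grad\bu^\epsilon = \grad\bw + \grad\bu$ and using Ladyzhenskaya \eqref{ladyz} then yields $|A|\le C\normLp{2}{\grad\bu}\normLp{2}{\bw}\normLp{2}{\grad\bw} + C\normLp{2}{\bw}\normLp{2}{\grad\bw}^2$. For $B$, the unit Lipschitz bound in $L^4$ (Lemma \ref{eta_lemma_Lip}) gives $\normLp{4}{\bee(\bu^\epsilon)-\bee(\bu)}\le\normLp{4}{\bw}$, hence $|B|\le\normLp{2}{\grad\bu}\normLp{4}{\bw}^2\le C\normLp{2}{\grad\bu}\normLp{2}{\bw}\normLp{2}{\grad\bw}$, the same shape as the first piece of $A$; by Young's inequality both are absorbed into $\tfrac14\normLp{2}{\lap\bw}^2 + C\normLp{2}{\grad\bu}^{4/3}\normLp{2}{\bw}^2$. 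The consistency term $C$ is where Condition \ref{eta_cond_conv} enters: by \eqref{pwise_conv}, $|\bee(\bu) - \bu|\le C\epsilon^\alpha|\bu|^\beta$, so $|C|\le C\epsilon^\alpha\normLp{2}{|\bu|^\beta\grad\bu}\normLp{2}{\bw}$, and an Agmon/Gagliardo--Nirenberg interpolation (worst case $\beta = 3$) bounds $\normLp{2}{|\bu|^\beta\grad\bu}\le C\normLp{2}{\bu}^2\normHs{2}{\bu}^2 =: g(t)$, which lies in $L^1(0,T)$ precisely because $\bu\in L^\infty(0,T;L^2)\cap L^2(0,T;H^2)$.

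After absorbing the $\lap\bw$ contributions, the only surviving nonlinear obstruction is the cubic term $C\normLp{2}{\bw}\normLp{2}{\grad\bw}^2$ coming from $A$, and this is exactly what the bootstrap removes. I would fix $M>0$ and set $T_\epsilon := \sup\{t\le T : \normLp{2}{\bw(s)}\le M\epsilon^\alpha \text{ for } s\in[0,t]\}$, which is positive since $\bw(0)=\mathbf{0}$ and $\bw\in C([0,T];L^2)$. On $[0,T_\epsilon]$ the coefficient $\normLp{2}{\bw}\le M\epsilon^\alpha$ is small, so via \eqref{grad_interp} the cubic term is $\le CM\epsilon^\alpha\bigl(\normLp{2}{\bw}^2 + \normLp{2}{\lap\bw}^2\bigr)$, and for $\epsilon$ small both pieces are absorbed. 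This leaves $\ddt\normLp{2}{\bw}^2 + \normLp{2}{\lap\bw}^2 \le P(t)\normLp{2}{\bw}^2 + C\epsilon^\alpha g(t)\normLp{2}{\bw}$, with $P = C\normLp{2}{\grad\bu}^{4/3} + (\text{small})\in L^1(0,T)$ and $g\in L^1(0,T)$. Working with $z := \normLp{2}{\bw}$ rather than $z^2$ (dividing through by $z$) converts this into the linear inequality $z' \le \tfrac12 P z + C\epsilon^\alpha g$; since $z(0)=0$, Gr\"onwall yields $\normLp{2}{\bw(t)}\le \epsilon^\alpha\,C e^{\frac12\int_0^T P}\!\int_0^T g =: K_0\epsilon^\alpha$. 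Choosing $M>K_0$ makes $\normLp{2}{\bw}<M\epsilon^\alpha$ strictly on $[0,T_\epsilon]$, so by continuity $T_\epsilon = T$, giving the $L^\infty(0,T;L^2)$ bound. Integrating the differential inequality over $[0,T]$ and inserting this bound controls $\int_0^T\normLp{2}{\lap\bw}^2\,dt\le (K')^2\epsilon^{2\alpha}$, which together with the $L^2$ bound gives the $L^2(0,T;H^2)$ estimate via the equivalence $\normHs{2}{\bw}\sim\normLp{2}{\bw}+\normLp{2}{\lap\bw}$ on the torus.

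The one genuine subtlety, flagged in the introduction, is that the advective field $\bee(\bu^\epsilon)$ is not divergence-free, so the energy identity is not automatically clean; the integration-by-parts treatment of $A$ together with the unit Lipschitz bound on $\bee$ is the structural mechanism that tames the otherwise $\epsilon^{-1}$-sized term, at the cost of the cubic remainder that the time-bootstrap then absorbs. The use of $\bw(0)=\mathbf{0}$ in the \emph{first-order} Gr\"onwall (so that only $g\in L^1$, not $g\in L^2$, is needed) is what makes the argument go through at $\beta=3$. Finally, the bound is stated for all $\epsilon>0$, but the bootstrap only requires $\epsilon$ small; for $\epsilon$ bounded away from $0$ one has $\normLp{\infty}{\bee}$ bounded, hence uniform energy control on $\bu^\epsilon$, and the claimed inequality holds trivially after enlarging $K$, so the two regimes combine.
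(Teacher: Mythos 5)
Your argument is correct, and its overall architecture coincides with the paper's: form the equation for $\bw=\bu^\epsilon-\bu$, test with $\bw$, exploit $\bw(0)=\mathbf{0}$ through a \emph{first-order} Gr\"onwall inequality (dividing by $\normLp{2}{\bw}$, so the consistency term only needs to be $L^1$ in time, which is exactly what permits $\beta=3$), close the nonlinear term with a continuity/bootstrap argument in time, and then integrate once more for the $L^2(0,T;H^2)$ bound. Where you genuinely differ is the decomposition and the treatment of the advection of $\bw$. The paper splits the nonlinear difference into \emph{four} pieces,
\[
((\bee(\bu^\epsilon)-\bee(\bu))\cdot\grad)\bw,\qquad (\bee(\bu)\cdot\grad)\bw,\qquad ((\bee(\bu^\epsilon)-\bee(\bu))\cdot\grad)\bu,\qquad ((\bee(\bu)-\bu)\cdot\grad)\bu,
\]
and bounds each directly by H\"older and Ladyzhenskaya, using the pointwise bounds $|\bee(\bu^\epsilon)-\bee(\bu)|\le|\bw|$ and $|\bee(\bu)|\le|\bu|$ (the latter from the unit Lipschitz constant plus $\bee(\mathbf{0})=\mathbf{0}$, which Condition \ref{eta_cond_conv} forces). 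So your premise that estimating $(\bee(\bu^\epsilon)\cdot\grad)\bw$ directly ``carries the factor $\normLp{\infty}{\bee}\sim\epsilon^{-1}$'' is not accurate --- the pointwise bound $|\bee(\bu^\epsilon)|\le|\bu^\epsilon|\le|\bw|+|\bu|$ already avoids any $\epsilon^{-1}$, with no integration by parts required. Your alternative --- keeping that term whole, integrating by parts, and using the a.e.\ bound $|\grad\cdot\bee(\bu^\epsilon)|\le C|\grad\bu^\epsilon|$ --- is valid and produces cubic terms of the same shape, but a rigorous version must invoke the chain rule for Lipschitz functions composed with Sobolev maps (Marcus--Mizel/Stampacchia type results), a technicality the paper's splitting sidesteps; what your route buys is a single structural mechanism that would extend to any advecting field with an $\epsilon$-uniformly bounded Jacobian. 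The bootstraps are equivalent in effect: your hypothesis $\normLp{2}{\bw}\le M\epsilon^\alpha$ absorbs the cubic term $C\normLp{2}{\bw}\normLp{2}{\grad\bw}^2$ into the dissipation, while the paper assumes only $\normLp{2}{\bw}\le 1$ and uses it to replace $\normLp{2}{\bw}^4$ by $\normLp{2}{\bw}^2$; both then run the identical Gr\"onwall step. Finally, you are right to flag the small-$\epsilon$ restriction hidden in the bootstrap (the paper's proof requires $\epsilon$ small enough that $e^{A(T)}B(T)\epsilon^\alpha\le\tfrac12$ and never discusses larger $\epsilon$); just note that your large-$\epsilon$ patch assumes $\normLp{\infty}{\bee}$ stays bounded for $\epsilon$ bounded away from $0$, which holds for the three examples in \eqref{eta_choices} but is not literally implied by Definition \ref{eta_def} alone.
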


\vskip .1in 

In order to prove Theorem \ref{conv}, we need the following 
abstract bootstrapping/continuity argument (see, e.g., 
\cite[p. 20]{Tao_2006}).

\begin{lemma} \label{boot}
	Let $T>0$. Assume that two statements $C(t)$ and $H(t)$ with $t\in [0,T]$ 
	satisfy the
	following conditions:
	\begin{enumerate}
		\item[(a)] If $H(t)$ holds for some $t\in [0,T]$, then $C(t)$ holds for 
		the same $t$;
		\item[(b)] If $C(t)$ holds for some $t_0\in [0,T]$, then $H(t)$ holds 
		for $t$ in a neighborhood of $t_0$;
		\item[(c)] If $C(t)$ holds for $t_m\in [0,T]$ and $t_m \to t$, then 
		$C(t)$ holds;
		\item[(d)] $H(t)$ holds for at least one $t_1 \in [0,T]$.
	\end{enumerate}
	Then $C(t)$ holds for all $t\in[0,T]$.
\end{lemma}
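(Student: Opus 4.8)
The plan is to run the standard clopen (connectedness) argument on the interval $[0,T]$, exploiting the fact that $[0,T]$ is connected and that the only nonempty subset of a connected space that is simultaneously open and closed is the whole space.

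First I would introduce the \emph{conclusion set}
\[
\Omega := \{\, t \in [0,T] : C(t) \text{ holds} \,\},
\]
and reduce the lemma to showing that $\Omega$ is nonempty, relatively open, and relatively closed in $[0,T]$. Connectedness of $[0,T]$ then forces $\Omega = [0,T]$, which is precisely the assertion that $C(t)$ holds for all $t \in [0,T]$.

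The three topological properties follow directly from the four hypotheses. For nonemptiness, hypothesis (d) supplies some $t_1$ at which $H(t_1)$ holds, and (a) upgrades this to $C(t_1)$, so $t_1 \in \Omega$. For closedness, I would take a sequence $t_m \in \Omega$ with $t_m \to t \in [0,T]$; since $C(t_m)$ holds for every $m$, hypothesis (c) yields $C(t)$, whence $t \in \Omega$, so $\Omega$ is sequentially closed and therefore closed. For openness, I would fix $t_0 \in \Omega$; as $C(t_0)$ holds, (b) furnishes a neighborhood of $t_0$ on which $H$ holds, and (a) converts this into $C$ holding throughout that neighborhood, placing it entirely inside $\Omega$.

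The argument carries essentially no analytic difficulty; its whole content is the chaining of the four hypotheses. The only point demanding care is the topological bookkeeping at the endpoints: the words \emph{open}, \emph{closed}, and \emph{neighborhood} must be read in the subspace topology of $[0,T]$, so that (b) is permitted to return one-sided neighborhoods at $t=0$ and $t=T$ and the connectedness conclusion is applied to the closed interval itself. Once this convention is fixed, all three properties of $\Omega$ close immediately and the proof is complete.
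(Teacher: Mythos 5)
Your proof is correct. The paper itself offers no proof of this lemma—it simply cites Tao's book—and your clopen/connectedness argument (the conclusion set is nonempty by (d)+(a), open in $[0,T]$ by (b)+(a), closed by (c), hence all of $[0,T]$) is precisely the standard proof of this bootstrap principle, with the one genuine subtlety, reading neighborhoods and openness in the subspace topology of $[0,T]$ so the endpoints are handled, correctly addressed.
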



With this lemma, we are now ready to prove our convergence result.

\begin{proof}[Proof of Theorem \ref{conv}]
Set 
\[
\bw^\epsilon :=\bu^\epsilon - \bu
\]
and take the difference between \eqref{cKSE} 
and \eqref{KSE} 
to obtain
\[
\p_t \bw^\epsilon + \Delta \bw^\epsilon+ \Delta^2 \bw^\epsilon =-  
(\bee (\bu^\epsilon) \cdot \na)\bu^\epsilon + (\bu\cdot\na) \bu.
\]
Testing each side by $\bw^\epsilon$ we obtain, after 
integration by parts,
\begin{align}
\frac12\frac{d}{dt} \|\bw^\epsilon\|_{L^2}^2 
+ \|\Delta \bw^\epsilon\|_{L^2}^2 = \|\na \bw^\epsilon\|_{L^2}^2 +  N, \label{root}
\end{align}
where $N$ is given by 
\[
N := - \int_{\T} ((\bee (\bu^\epsilon) \cdot 
\na)\bu^\epsilon - (\bu\cdot\na) \bu) 
\cdot \bw^\epsilon \,d\bx.
\]

By inequality \eqref{grad_interp}, 
\begin{align}
\|\na \bw^\epsilon\|_{L^2}^2 \le \|\bw^\epsilon\|_{L^2}\, \|\Delta 
\bw^\epsilon\|_{L^2} \le \frac12 \|\Delta \bw^\epsilon\|_{L^2}^2 + \frac12 
\|\bw^\epsilon\|_{L^2}^2.\label{int}
\end{align}
Inserting \ref{int} in \ref{root} yields
\begin{align}	
\frac{d}{dt} \|\bw^\epsilon\|_{L^2}^2 
+ \|\Delta \bw^\epsilon\|_{L^2}^2 \leq \|\bw^\epsilon\|_{L^2}^2 +  2N.
\label{root0}
\end{align}

$N$ can be written as 
\begin{align*}
    N = -
        &\int_{\T} ((\bee (\bu^\epsilon) -\bee (\bu)) \cdot \na) \bw^\epsilon \cdot \bw^\epsilon \,d\bx -
        \int_{\T}  (\bee (\bu)\cdot\na)\bw^\epsilon \cdot \bw^\epsilon \,d\bx 
        \\ -
        &\int_{\T}  ((\bee (\bu^\epsilon) - \bee (\bu ))\cdot \na)\bu \cdot \bw^\epsilon \,d\bx -
        \int_{\T}  ((\bee (\bu )-\bu)\cdot \na)\bu \cdot \bw^\epsilon \,d\bx.
\end{align*}
Using the Lipschitz property of $\bee$ and \eqref{pwise_conv}, we see that $N$ is bounded by 
\begin{align*}
    \abs{N} \leq \quad 
        &\int_{\T} \abs{\bee (\bu^\epsilon) -\bee (\bu) }\abs{\grad \bw^\epsilon} \abs{\bw^\epsilon} \,d\bx +
        \int_{\T}  \abs{\bee (\bu)} \abs{\grad \bw^\epsilon} \abs{\bw^\epsilon} \,d\bx \\ + 
        &\int_{\T} \abs{\bee (\bu^\epsilon) -\bee (\bu) } \abs{\grad \bu} \abs{\bw^\epsilon} \,d\bx +
        \int_{\T}  \abs{\bee (\bu )-\bu} \abs{\grad \bu} \abs{\bw^\epsilon} \,d\bx. \\ \leq \quad
        &\int_{\T} \abs{\bw^\epsilon }^2\abs{\grad \bw^\epsilon} \,d\bx +
        \int_{\T}  \abs{\bu} \abs{\grad \bw^\epsilon} \abs{\bw^\epsilon} \,d\bx \\ + 
        &\int_{\T} \abs{\bw^\epsilon }^2 \abs{\grad \bu} \,d\bx +
        C \epsilon^\alpha \int_{\T}  \abs{\bu}^\beta \abs{\grad \bu} \abs{\bw^\epsilon} \,d\bx. \\ = \quad 
        & N_1 + N_2 + N_3 + N_4.
\end{align*}

These terms can be bounded as follows. By H\"older's inequality, 
Ladyzhenskaya's inequality, \eqref{grad_interp}, and Young's inequality,
\begin{align}
\label{n1b}
	N_1 &\le \|\bw^\epsilon\|_{L^4}^2 \|\na \bw^\epsilon\|_{L^2} \le C\, 
	\|\bw^\epsilon\|_{L^2} \,\|\na \bw^\epsilon\|_{L^2}^2\\
	& \le C\, \|\bw^\epsilon\|_{L^2}^2\,\|\Delta \bw^\epsilon\|_{L^2} \le 
	\frac1{16} \|\Delta \bw^\epsilon\|_{L^2}^2 +C\,\|\bw^\epsilon\|_{L^2}^4. 
 \notag
\end{align}
\begin{align}
\label{n2b}
	N_2 &\le \|\bu\|_{L^2} \, \|\bw^\epsilon\|_{L^4} \|\na 
	\bw^\epsilon\|_{L^4}\\ 
	&\le C\, \|\bu\|_{L^2} \,
	\|\bw^\epsilon\|_{L^2}^\frac12 \,\|\na \bw^\epsilon\|_{L^2}\,\|\Delta 
	\bw^\epsilon\|_{L^2}^\frac12\notag\\
	& \le C\, \|\bu\|_{L^2} \,
	\|\bw^\epsilon\|_{L^2} \|\Delta 
	\bw^\epsilon\|_{L^2}\notag\\
	&\le 
	\frac1{16} \|\Delta \bw^\epsilon\|_{L^2}^2 +C\,\|\bu\|_{L^2}^2\, 
	\|\bw^\epsilon\|_{L^2}^2.\notag
\end{align}
\begin{align}
\label{n3b}
	N_3 &\le  \|\na \bu\|_{L^2} \,\|\bw^\epsilon\|_{L^4}^2 \le C\, 
	\|\bu\|_{L^2}^\frac12\,\|\Delta \bu\|_{L^2}^\frac12 
	\,\|\bw^\epsilon\|_{L^2}\,\|\na \bw^\epsilon\|_{L^2} \\
	&\le C\, 
	\|\bu\|_{L^2}^\frac12\,\|\Delta \bu\|_{L^2}^\frac12 
	\,\|\bw^\epsilon\|^\frac32 _{L^2}\,\|\Delta \bw^\epsilon\|_{L^2}^\frac12  
	\notag\\
	&\le\frac1{16} \|\Delta \bw^\epsilon\|_{L^2}^2 
	+C\,\|\bu\|_{L^2}^\frac23\,\|\Delta \bu\|_{L^2}^\frac23
	\,\|\bw^\epsilon\|^2_{L^2}.\notag
\end{align}

By H\"older's inequality, 
Ladyzhenskaya's inequality, and \eqref{grad_interp},

\begin{align}
\label{n4b}
    N_4 &\leq 
    C \epsilon^\alpha 
    \normLp{2}{\bw^\epsilon} 
    \normLp{\infty}{\bu}^\beta  
    \normLp{2}{\grad\bu} \\ &\leq 
    C \epsilon^\alpha 
    \normLp{2}{\bw^\epsilon} 
    \normLp{2}{\bu}^\frac{\beta}{2}  
    \normHs{2}{\bu}^\frac{\beta}{2} 
    \normLp{2}{\bu}^\frac{1}{2}
    \normLp{2}{\lap \bu}^\frac{1}{2} \notag \\ &\leq 
    C \epsilon^\alpha 
    \normLp{2}{\bw^\epsilon} 
    \normLp{2}{\bu}^\frac{\beta}{2} \lp
    \normLp{2}{\bu}^\frac{\beta}{2} +
    \normLp{2}{\lap \bu}^\frac{\beta}{2} \rp
    \normLp{2}{\bu}^\frac{1}{2}
    \normLp{2}{\lap \bu}^\frac{1}{2} \notag \\ &= 
    \lp
    C \epsilon^\alpha 
    \normLp{2}{\bu}^{\beta + \frac{1}{2}} 
    \normLp{2}{\lap \bu}^\frac{1}{2} +
    C \epsilon^\alpha 
    \normLp{2}{\bu}^{\frac{\beta}{2} + \frac{1}{2}} 
    \normLp{2}{\lap \bu}^{\frac{\beta}{2} + \frac{1}{2}}
    \rp 
    \normLp{2}{\bw^\epsilon}\notag
\end{align}

We now insert the bounds for $N$ in \eqref{root}:

\begin{align}
\label{en}
    & \frac{d}{dt} \normLp{2}{\bw^\epsilon}^2 + 
    \normLp{2}{\lap \bw^\epsilon}^2 \leq 
    \normLp{2}{\bw^\epsilon}^2 +  
    2N \\ \leq \quad & 
    \frac{6}{16}\normLp{2}{\lap \bw^\epsilon}^2 + 
    C\normLp{2}{\bw^\epsilon}^4 
    \notag \\ + &
    \lp
    1 + C \normLp{2}{\bu}^2 + 
    C\normLp{2}{\bu}^\frac{2}{3} \normLp{2}{\lap \bu}^\frac{2}{3}
    \rp
    \normLp{2}{\bw^\epsilon}^2 \notag \\  + & 
    C
    \lp
    \epsilon^\alpha 
    \normLp{2}{\bu}^{\beta + \frac{1}{2}} 
    \normLp{2}{\lap \bu}^\frac{1}{2} +
    \epsilon^\alpha 
    \normLp{2}{\bu}^{\frac{\beta}{2} + \frac{1}{2}} 
    \normLp{2}{\lap \bu}^{\frac{\beta}{2} + \frac{1}{2}}
    \rp 
    \normLp{2}{\bw^\epsilon} \notag
\end{align}

Due to the presence of the term $\|\bw^\epsilon\|_{L^2}^4$, we cannot apply Gr\"onwall's inequality directly.  However, since $\|\bw^\epsilon\|_{L^2}$ is supposed to be small,  this term is not a ``bad'' term and is even smaller than $\|\bw^\epsilon\|_{L^2}^2$. 
We just need to apply a bootstrapping 
argument, as stated in Lemma \ref{boot}. Denote by $H(t)$ with $t\in [0,T]$ the 
statement 
that 
\[
\|\bw^\epsilon(t)\|_{L^2} \leq 1
\]
and by $C(t)$ the statement that 
\[
\|\bw^\epsilon(t)\|_{L^2}  \leq e^{A(T)} B(T)\, \epsilon^\alpha \leq \tfrac12,
\]
where $A(t)$ and $B(t)$ are defined as in \eqref{BootIneq_A} and \eqref{BootIneq_B} below and $\epsilon$ 
is taken to be sufficiently small such that 
\[
e^{A(T)} B(T)\, \epsilon^\alpha \leq \tfrac12.
\]
Clearly, $C(t)$ is a stronger statement than $H(t)$, and thus (b) of Lemma 
\ref{boot} holds. When the solutions are regular enough, then 
$\|\bw^\epsilon(t)\|_{L^2}$ is continuous in time. Indeed, this regularity is given by Condition \ref{rr} and Definition \ref{cKSE_vec_weakDef} and thus (c) of Lemma 
\ref{boot} 
holds. For $t=0$, $\|\bw^\epsilon(t)\|_{L^2}$ is zero and thus (d) of  Lemma 
\ref{boot} holds.  In order to apply Lemma \ref{boot}, it remains to verify 
(a). 
That is, if $H(t)$ holds for some $t\in [0,T]$, namely
\[
\|\bw^\epsilon(t)\|_{L^2} \leq 1,
\]
then $C(t)$ holds at the same $t$, namely 
\[
\|\bw^\epsilon(t)\|_{L^2} \leq e^{A(T)} B(T)\, \epsilon^\alpha <\tfrac12.
\]
We assume that, for some $t\in [0,T]$, 
\begin{align}
\|\bw^\epsilon(t)\|_{L^2} \leq 1 \label{an}
\end{align}
and then show that \eqref{an} leads to a desired smaller bound at this 
same $t$.
Now we replace
$\|\bw^\epsilon\|_{L^2}^4$ by $\|\bw^\epsilon\|_{L^2}^2$ in \eqref{en} and eliminate $\|\bw^\epsilon\|_{L^2}$ from each term to obtain 

\begin{align}
    \frac{d}{dt} \normLp{2}{\bw^\epsilon} & \leq
    C
    \lp
    1 + \normLp{2}{\bu}^2 + 
    \normLp{2}{\bu}^\frac{2}{3} \normLp{2}{\lap \bu}^\frac{2}{3}
    \rp 
    \normLp{2}{\bw^\epsilon} \notag \\ \notag &\quad +
    C
    \epsilon^\alpha 
    \lp
    \normLp{2}{\bu}^{\beta + \frac{1}{2}} 
    \normLp{2}{\lap \bu}^\frac{1}{2} +
    \normLp{2}{\bu}^{\frac{\beta}{2} + \frac{1}{2}} 
    \normLp{2}{\lap \bu}^{\frac{\beta}{2} + \frac{1}{2}}
    \rp ,
\end{align}
in which we also use the fact that $\frac{d}{dt} \normLp{2}{\bw^\epsilon}^2 = 2 \normLp{2}{\bw^\epsilon} \frac{d}{dt} \normLp{2}{\bw^\epsilon}$. \vspace{2 mm}

Due to the regularity assumption on $\bu$ in \ref{rr}, the terms \\
$\lp 1 + \normLp{2}{\bu}^2 + \normLp{2}{\bu}^\frac{2}{3} \normLp{2}{\lap \bu}^\frac{2}{3} \rp$
and 
$\normLp{2}{\bu}^{\beta + \frac{1}{2}} \normLp{2}{\lap \bu}^\frac{1}{2}$
are integrable for $ \beta \geq 0$. Furthermore, for $\beta \leq 3$, 
$\frac{\beta}{2} + \frac{1}{2} \leq 2$ and thus
$\normLp{2}{\bu}^{\frac{\beta}{2} + \frac{1}{2}}\normLp{2}{\lap \bu}^{\frac{\beta}{2} + \frac{1}{2}}$
is integrable.
It then follows from Gr\"onwall's inequality that 
\begin{align}
\|\bw^\epsilon(t)\|_{L^2} \le e^{A(t)}\|\bw^\epsilon(0)\|_{L^2} +  e^{A(t)} B(t)\, \epsilon^\alpha \le e^{A(T)} B(T)\, \epsilon^\alpha, \label{bb}
\end{align}
where we have used the fact that the initial difference  $\bw^\epsilon(0)=0$ and have written 
\begin{align}\label{BootIneq_A}
A(t) &:=C\, \int_0^t 
\Big(
1 + \normLp{2}{\bu}^2 + 
\normLp{2}{\bu}^\frac{2}{3} \normLp{2}{\lap \bu}^\frac{2}{3}
\Big) ds,\\ \label{BootIneq_B}
B(t) &:= C\, \int_0^t
\Big(
\normLp{2}{\bu}^{\beta + \frac{1}{2}} 
\normLp{2}{\lap \bu}^\frac{1}{2} +
\normLp{2}{\bu}^{\frac{\beta}{2} + \frac{1}{2}} 
\normLp{2}{\lap \bu}^{\frac{\beta}{2} + \frac{1}{2}}
\Big)\,ds.
\end{align}

By taking $\epsilon$ sufficiently small, from \eqref{bb} we deduce that any \\ $t\in [0,T]$ which satisfies
\[
\|\bw^\epsilon(t)\|_{L^2} <1. 
\]
must also satisfy 
\[
\|\bw^\epsilon(t)\|_{L^2} \le  e^{A(T)} B(T)\, \epsilon^\alpha < \tfrac{1}{2}. 
\]
Thus the bootstrapping argument holds, and we conclude as claimed that for all $t \leq T$,
\begin{align} \label{conv_LinfL2_est}
\normLp{2}{\bw^\epsilon(t) } \leq K_1\epsilon^\alpha
\end{align}
where $K_1 = e^{A(T)} B(T)$ depends on $T$, $\bu$, and $\beta$. In particular, $\bu^\epsilon \to \bu$ in $L^\infty(0,T; L^2(\mathbb{T}^2))$ as $\epsilon\rightarrow0^+$. 
Next, integrate \eqref{en} for all $t\in[0,T]$ (again replacing $\normLp{2}{\bw^\epsilon}^4$ by $\normLp{2}{\bw^\epsilon}^2$) to obtain
\begin{align}
&\quad \notag
    \frac{10}{16}\int_0^T \normLp{2}{\lap \bw^\epsilon}^2 dt 
    \\&\leq \notag
    C \int_0^T
    \lp
    1 + \normLp{2}{\bu}^2 + 
    \normLp{2}{\bu}^\frac{2}{3} \normLp{2}{\lap \bu}^\frac{2}{3}
    \rp
    \normLp{2}{\bw^\epsilon}^2 \, dt  
    \\ & \quad + \notag
    C \epsilon^\alpha 
    \int_0^T \lp
    \normLp{2}{\bu}^{\beta + \frac{1}{2}} 
    \normLp{2}{\lap \bu}^\frac{1}{2} +
    \normLp{2}{\bu}^{\frac{\beta}{2} + \frac{1}{2}} 
    \normLp{2}{\lap \bu}^{\frac{\beta}{2} + \frac{1}{2}}
    \rp 
    \normLp{2}{\bw^\epsilon} \, dt
\end{align}
In which we are again using the fact that $\bw^\epsilon(0) = 0$.
Applying \eqref{BootIneq_A}, \eqref{BootIneq_B}, and \eqref{conv_LinfL2_est} then yields 
\begin{align}
    \int_0^T \normLp{2}{\lap \bw^\epsilon}^2 dt \leq 
    \frac{16}{10}A(T)K_1^2 \epsilon^{2\alpha} + 
    \frac{16}{10}B(T)K_1\epsilon^{2\alpha}.
\end{align}
For $K_2 = \frac{16}{10}A(T)K_1^2 + \frac{16}{10}B(T)K_1$ (again only depending on $T$, $\|\bu\|_{L^\infty(0,T;L^2)}$, $\|\bu\|_{L^2(0,T;H^2)}$, and $\beta$), we obtain 
\begin{align}
    \| \lap \bw^\epsilon \|_{L^2(0,T; L^2)} \leq K_2^\frac{1}{2} \epsilon^\alpha.
\end{align}
Using an interpolation inequality, we obtain
\begin{align}
\| \bw^\epsilon \|_{L^2(0,T; H^2)} &\leq
C\|\bw^\epsilon \|_{L^2(0,T; L^2)} +
C\|\lap \bw^\epsilon \|_{L^2(0,T; L^2)} \\ &\leq
CT^\frac{1}{2}\|\bw^\epsilon \|_{L^\infty(0,T; L^2)} +
C\|\lap \bw^\epsilon \|_{L^2(0,T; L^2)} \notag \\ &\leq
C\lp T^\frac{1}{2}K_1 + K_2^\frac{1}{2} \rp \epsilon^\alpha \notag
\end{align}
as claimed.  
In particular, $\bu^\epsilon \to \bu$ in $L^2(0,T; H^2(\nT^2))$ as $\epsilon\rightarrow0^+$.
\end{proof}

\begin{corollary}\label{conv_particular}
    Consider the calming functions $\bee$ as described in \eqref{eta_choices}. Let $\bu, \bu^\epsilon$ be as in the statement of Theorem \ref{conv} with the same initial data, where $\bu^\epsilon$ is determined by $\bee_i$, $i=1,2$, or $3$.  Then for $T<T^*$, there exists $K'_i>0$ independent of $\epsilon$ such that
    \begin{enumerate}
        \item for $\bee = \bee_1$,
        \begin{align}
        \label{conv_type1}
        \|\bu^\epsilon - \bu\|_{L^\infty(0,T;L^2)} \leq K'_1 \epsilon,
        \end{align}
        \item for $\bee = \bee_2,$
        \begin{align}
        \label{conv_type2}
        \|\bu^\epsilon - \bu\|_{L^\infty(0,T;L^2)} \leq K'_2 \epsilon^2,
        \end{align}
        \item for $\bee = \bee_3,$
        \begin{align}
        \label{conv_type3}
        \|\bu^\epsilon - \bu\|_{L^\infty(0,T;L^2)} \leq K'_3 \epsilon^2.
        \end{align}
    \end{enumerate}
\end{corollary}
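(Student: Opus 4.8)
The plan is to obtain this corollary as an immediate specialization of Theorem \ref{conv}, feeding in the explicit pointwise bounds already recorded in Proposition \ref{eta_cond_prop}. The only real work is bookkeeping: matching each $\bee_i$ to the correct pair of exponents $(\alpha,\beta)$ appearing in Condition \ref{eta_cond_conv}, and checking that the admissibility requirement $\beta\in[0,3]$ from the hypothesis of Theorem \ref{conv} is met in each case.

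First I would invoke Proposition \ref{eta_cond_prop} to assert that each of $\bee_1,\bee_2,\bee_3$ satisfies Conditions \ref{eta_cond_Lip}, \ref{eta_cond_bdd}, and \ref{eta_cond_conv} of Definition \ref{eta_def}, so that all the hypotheses of Theorem \ref{conv} are in force. From the same proposition I would read off the exponents: the bound $\abs{\bee_1(\bx)-\bx}\le \epsilon\abs{\bx}^2$ gives $\alpha=1$ and $\beta=2$, while both $\abs{\bee_2(\bx)-\bx}\le\epsilon^2\abs{\bx}^3$ and $\abs{\bee_3(\bx)-\bx}\le\epsilon^2\abs{\bx}^3$ give $\alpha=2$ and $\beta=3$. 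In each case $\beta\in[0,3]$, so Theorem \ref{conv} applies verbatim.

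Next I would apply the $L^\infty(0,T;L^2)$ estimate of Theorem \ref{conv}, namely $\normLp{2}{\bu^\epsilon-\bu}\le K\epsilon^\alpha$ uniformly on $[0,T]$, substituting the value of $\alpha$ determined above. This yields \eqref{conv_type1} with exponent $\epsilon^1$ for $\bee_1$, and \eqref{conv_type2} and \eqref{conv_type3} with exponent $\epsilon^2$ for $\bee_2$ and $\bee_3$, where $K'_i$ is the constant $K$ furnished by the theorem for the corresponding $\beta$. Since $T<T^*$, the regularity $\bu\in C([0,T];L^2)\cap L^2(0,T;H^2)$ guarantees that the integrals defining $A(T)$ and $B(T)$ in \eqref{BootIneq_A}--\eqref{BootIneq_B}, and hence $K'_i$, are finite and independent of $\epsilon$.

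There is no substantive obstacle here; the statement is a direct corollary of the general theorem. The only point requiring any care is confirming that the exponent $\beta=3$ arising for $\bee_2$ and $\bee_3$ sits at the upper endpoint of the admissible range $[0,3]$ --- which is precisely why Theorem \ref{conv} was stated for $\beta$ up to $3$ --- so that the term $\normLp{2}{\bu}^{\frac{\beta}{2}+\frac12}\normLp{2}{\lap\bu}^{\frac{\beta}{2}+\frac12}$ in \eqref{n4b} remains integrable in time under the assumed energy-space regularity of $\bu$.
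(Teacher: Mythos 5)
Your proposal is correct and takes exactly the same route as the paper, whose entire proof reads ``The proof follows immediately from Theorem \ref{conv} and Proposition \ref{eta_cond_prop}''; your write-up simply makes explicit the bookkeeping (reading off $\alpha=1,\beta=2$ for $\bee_1$ and $\alpha=2,\beta=3$ for $\bee_2,\bee_3$, and checking $\beta\in[0,3]$) that the paper leaves implicit.
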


\begin{proof}
    The proof follows immediately from Theorem \ref{conv} and Proposition \ref{eta_cond_prop}.
\end{proof}

 \section{The Scalar Form}\label{sec_cKSE_scal}
\noindent
Here we investigate the scalar formulation \eqref{cKSE_scalar}. The analysis is similar to the analysis of \eqref{cKSE}, so we only briefly present formal energy estimates. 

 We propose the following ``calmed'' version of the scalar KSE
\begin{subequations}\label{cKSE_scal}
\begin{align}
\label{cKSE_scal_EQ}
    \dt{\phi} + 
    \tfrac{1}{2}(\bee(\grad\phi)\cdot\grad)\phi + 
    \lap\phi + \lap^2\phi &= 0, \\
\label{cKSE_scal_IC}
    \phi(\bx,0)&=\phi_0(\bx).
\end{align}
\end{subequations}
Other formulations are of course possible.  For example, one could consider a nonlinearity of the form $\tfrac{1}{2}\bee(|\grad\phi|^2)$, or $\tfrac{1}{2}\bee(|\grad\phi|^{\delta})|\grad\phi|^{2-\delta}$ ($0<\delta<2)$, or $\frac{\frac12|\grad\phi|^2}{1+\epsilon^2|\phi|^2}$, or many other possibilities.  However, in the present work, we choose to focus only on the form in \eqref{cKSE_scal_EQ}, as the advective nature of the nonlinearity seems perhaps closest in spirit to the nature of the original equation.

\begin{definition}\label{cKSE_scal_weakDef}
Let $\phi_0\in L^2(\nT^2)$ and let $T>0$.  We say that $\phi$ is a \textit{weak solution} to \eqref{cKSE_scal} on the interval $[0,T]$ if $\phi\in L^2([0,T]; H^2(\nT^2))\cap C([0,T]; L^2(\nT^2))$, $\partial_t\phi\in L^2(0,T; H^{-2}(\nT^2))$, and $\phi$ satisfies \eqref{cKSE_scal_EQ} in the sense of $L^2(0,T; H^{-2}(\nT^2))$ and satisfies \eqref{cKSE_scal_IC} in the sense of $C([0,T]; L^2(\nT^2))$.
\end{definition}

\begin{theorem} 
    \label{cKSE_scal_Ex_and_Uni} 
    Let initial data $\phi_0 \in L^2(\mathbb{T}^2)$ be given, and let $T>0$,  $\epsilon > 0$ be fixed. Suppose $\bee$ is a calming function which satisfies Conditions \ref{eta_cond_Lip} and \ref{eta_cond_bdd} of Definition \ref{eta_def}.
    Then weak solutions to \eqref{cKSE_scal} on $[0,T]$ exist, are unique, and depend continuously on the initial data in $L^\infty(0,T; L^2(\nT^2)) \cap L^2(0,T; H^2(\nT^2))$.
\end{theorem}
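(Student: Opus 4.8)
The plan is to follow the proof of Theorem~\ref{Ex_and_Uni} essentially line by line, since the only genuinely new feature of the scalar system \eqref{cKSE_scal} is that the calming function is evaluated at $\grad\phi$ rather than at the unknown itself. First I would set up a Galerkin scheme, seeking $\phi^m\in H_m$ solving the $P_m$-projection of \eqref{cKSE_scal_EQ} with data $P_m\phi_0$, and obtain local-in-time existence and uniqueness as in Lemma~\ref{LocalLipschitz}. The only modification is checking that $\phi\mapsto P_m\lp\tfrac12(\bee(\grad\phi)\cdot\grad)\phi\rp$ is locally Lipschitz on the finite-dimensional space $H_m$; writing the difference of nonlinear terms as $\lp(\bee(\grad\phi)-\bee(\grad\psi))\cdot\grad\rp\phi+\lp\bee(\grad\psi)\cdot\grad\rp(\phi-\psi)$ and using the unit Lipschitz bound (Condition~\ref{eta_cond_Lip}), the boundedness of $\bee$ (Condition~\ref{eta_cond_bdd}), the inverse estimate \eqref{Pm_Est}, and finiteness of all norms on $H_m$, this is immediate, and Picard--Lindel\"of gives a local solution.

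Next I would test with $\phi^m$. Since $\bee$ is bounded, the nonlinear term satisfies $\abs{\lp(\bee(\grad\phi^m)\cdot\grad)\phi^m,\phi^m\rp}\le\normLp{\infty}{\bee}\,\normLp{2}{\grad\phi^m}\,\normLp{2}{\phi^m}$, which has exactly the same form as in the vector case; interpolating $\normLp{2}{\grad\phi^m}$ via \eqref{grad_interp} and absorbing with Young's inequality reproduces the differential inequality \eqref{gal_est1}, so Gr\"onwall yields uniform bounds for $\{\phi^m\}$ in $L^\infty(0,T;L^2)\cap L^2(0,T;H^2)$. The bound $\abs{\la(\bee(\grad\phi^m)\cdot\grad)\phi^m,\varphi\ra}\le\normLp{\infty}{\bee}\,\normHs{1}{\phi^m}\,\normLp{2}{\varphi}$ then controls $\dt\phi^m$ in $L^2(0,T;H^{-2})$. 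With these bounds, Banach--Alaoglu provides weak and weak-$*$ limits, while Aubin--Lions provides a subsequence converging strongly in $L^2(0,T;H^s)$ for every $s<2$; in particular $\grad\phi^m\to\grad\phi$ strongly in $L^2(0,T;L^2)$.

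This last strong convergence is precisely what is needed to pass to the limit in the nonlinearity: by the $L^2$-Lipschitz property (Lemma~\ref{eta_lemma_Lip}(i)) it forces $\bee(\grad\phi^m)\to\bee(\grad\phi)$ strongly in $L^2(0,T;L^2)$. Decomposing the nonlinear difference as $(\bee(\grad\phi)-\bee(\grad\phi^m))\cdot\grad\phi^m+\bee(\grad\phi)\cdot\grad(\phi-\phi^m)$ and adjoining the $Q_m$ commutator term handled exactly as in \eqref{GC_I6} through \eqref{QNproj}, each piece vanishes as $m\to\infty$ by this strong convergence together with the uniform $L^2(0,T;H^2)$ bound; attainment of the initial datum follows verbatim from the argument around \eqref{ini_data_check0}--\eqref{ini_data_check1}.

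The main obstacle is uniqueness, where the derivative inside $\bee$ costs a full derivative that the fourth-order dissipation must pay for. Setting $w:=\phi-\psi$ for two solutions and testing the difference of the equations with $w$, the delicate term is $\lp\lp(\bee(\grad\phi)-\bee(\grad\psi))\cdot\grad\rp\phi,w\rp$. In contrast to the vector case, where $\bee$ acted on the solution and produced only $\normLp{4}{w}$, here the unit Lipschitz bound gives $\normLp{2}{\bee(\grad\phi)-\bee(\grad\psi)}\le\normLp{2}{\grad w}$, one derivative higher. Estimating the term by H\"older as $\normLp{2}{\grad w}\,\normLp{4}{\grad\phi}\,\normLp{4}{w}$, applying Ladyzhenskaya \eqref{ladyz} to both $\normLp{4}{w}$ and $\normLp{4}{\grad\phi}$, and interpolating $\normLp{2}{\grad w}$ by \eqref{grad_interp}, one reaches a bound of the form $C\,\normLp{2}{\grad\phi}^{1/2}\normLp{2}{\lap\phi}^{1/2}\,\normLp{2}{w}^{5/4}\normLp{2}{\lap w}^{3/4}$; Young's inequality with exponents $8/3$ and $8/5$ absorbs $\normLp{2}{\lap w}^2$ into the dissipation and leaves the coefficient $C\,\normLp{2}{\grad\phi}^{4/5}\normLp{2}{\lap\phi}^{4/5}$ multiplying $\normLp{2}{w}^2$. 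The remaining advection term $\lp\lp\bee(\grad\psi)\cdot\grad\rp w,w\rp$ is controlled by the boundedness of $\bee$ exactly as $J_3$ in Theorem~\ref{Ex_and_Uni}. Since $\phi\in L^2(0,T;H^2)$ makes $\normLp{2}{\grad\phi}^{4/5}\normLp{2}{\lap\phi}^{4/5}$ integrable on $[0,T]$, Gr\"onwall closes the estimate and yields uniqueness; integrating the same inequality in time gives continuous dependence on the data in $L^\infty(0,T;L^2)\cap L^2(0,T;H^2)$, completing the proof of the theorem.
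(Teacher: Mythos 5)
Your proposal is correct, and its skeleton (Galerkin approximation, the $L^\infty(0,T;L^2)\cap L^2(0,T;H^2)$ energy estimate, a bound on $\partial_t\phi$ in $L^2(0,T;H^{-2})$, and uniqueness via the Lipschitz splitting of the nonlinear difference) is the same as the paper's. Two details differ genuinely, and both of yours check out. First, the paper proves this theorem only \emph{formally}: it performs the a priori and uniqueness estimates and explicitly defers rigor to ``Galerkin methods as in the proof of Theorem \ref{Ex_and_Uni}.'' You actually carry out the limit passage, and in doing so you isolate the one point where the scalar case truly differs from the vector case: because $\bee$ is evaluated at $\grad\phi$ rather than at the unknown itself, one needs \emph{strong} convergence of gradients, which you correctly extract from Aubin--Lions compactness in $L^2(0,T;H^s)$, $s<2$ (in particular $s=1$), so that $\bee(\grad\phi^m)\to\bee(\grad\phi)$ strongly in $L^2(0,T;L^2)$ by Lemma \ref{eta_lemma_Lip}(i); this also lets you dispense with the weak-continuity functional of Lemma \ref{eta_lemma_Lip}(ii) that the paper invokes in the vector case, since your second piece $\bee(\grad\phi)\cdot\grad(\phi-\phi^m)$ dies by strong convergence and boundedness of $\bee$ alone. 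Second, in the uniqueness estimate you and the paper place the H\"older exponents differently on the critical term: the paper uses the $L^4$-Lipschitz bound $\normLp{4}{\bee(\grad\psi)-\bee(\grad\phi)}\leq\normLp{4}{\grad\delta}$ and keeps $\normLp{2}{\grad\psi}$, arriving (via Ladyzhenskaya and \eqref{grad_interp}) at the Gr\"onwall coefficient $C\normLp{2}{\grad\psi}^2$; you instead use the $L^2$-Lipschitz bound and pay with $\normLp{4}{\grad\phi}$, and your chain --- Ladyzhenskaya \eqref{ladyz} twice, interpolation \eqref{grad_interp}, then Young with exponents $8/3$ and $8/5$ --- correctly yields $\tfrac18\normLp{2}{\lap w}^2 + C\normLp{2}{\grad\phi}^{4/5}\normLp{2}{\lap\phi}^{4/5}\normLp{2}{w}^2$, whose coefficient is integrable on $[0,T]$ (it is dominated by $C\normHs{2}{\phi}^{8/5}$ and $\phi\in L^2(0,T;H^2)$), exactly as the paper's coefficient is. The paper's splitting is marginally cleaner (integer exponent, one interpolation fewer), but both close the Gr\"onwall argument, and integrating the resulting inequality gives continuous dependence in $L^\infty(0,T;L^2)\cap L^2(0,T;H^2)$ in either version.
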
 
For the sake of brevity, we work formally rather than rigorously.  However, the proof below can be made rigorous, e.g., via the use of Galerkin methods as in the proof of Theorem \ref{Ex_and_Uni}.

\begin{proof}
    Take a (formal) inner product of \eqref{cKSE_scal} with $\phi$ and integrate by parts to obtain
    \begin{align}\label{scal_est1}
        \frac{1}{2}\frac{d}{dt}\normLp{2}{\phi}^2 +
        \normLp{2}{\lap\phi}^2 =  - \lp \lap \phi, \phi \rp - \lp \tfrac{1}{2}\bee\lp \grad \phi \rp \cdot \grad \phi , \phi \rp.
    \end{align}
    Using \eqref{grad_interp}  
    and 
   \begin{align}
   \notag
    \abs{\lp \tfrac{1}{2}\bee\lp \grad \phi \rp \cdot \grad \phi , \phi \rp} 
    &\leq 
    \frac{1}{2}\normLp{\infty}{\bee}\normLp{2}{\grad\phi}\normLp{2}{\phi} 
    \\&\notag
    \leq 
    C \normLp{\infty}{\bee}^\frac{4}{3}\normLp{2}{\phi}^2 +
    \frac{1}{4}\normLp{2}{\lap\phi}^2,
    \end{align}
    we obtain from \eqref{scal_est1} that 
    \begin{align}\label{scal_est2}
        \frac{d}{dt}\normLp{2}{\phi}^2 +
        \normLp{2}{\lap\phi}^2 \leq 
        \lp 2 + C \normLp{\infty}{\bee}^\frac{4}{3}\rp
        \normLp{2}{\phi}^2.
    \end{align}
    Hence from Gr\"onwall's inequality, dropping the second term in \eqref{scal_est2}, we obtain 
    \begin{align} \label{scal_LinfL2}
        \normLp{2}{\phi(t)}^2 \leq 
        e^{K_\epsilon T}\normLp{2}{\phi_0}^2,
    \end{align}
    where $K_\epsilon = \lp 2 + C \normLp{\infty}{\bee}^\frac{4}{3}\rp $. Hence $\phi \in L^\infty(0,T; L^2(\nT^2))$. Next, we integrate \eqref{scal_est2} in time on the interval $[0,T]$ and drop any unnecessary terms:
    \begin{align}
        \label{scal_L2H2}
        \int_0^T \frac{1}{2}\normLp{2}{\lap\phi}^2 &\leq 
        \int_0^T K_\epsilon
        \normLp{2}{\phi(t)}^2 dt + 
        \normLp{2}{\phi_0}^2  \\ &\leq 
        \int_0^T K_\epsilon
        e^{K_\epsilon T}\normLp{2}{\phi_0}^2 dt + 
        \normLp{2}{\phi_0}^2 \notag \\ &=  
        \lp K_\epsilon T
        e^{K_\epsilon T} + 1 \rp \normLp{2}{\phi_0}^2. \notag
    \end{align}
    Therefore $\phi \in L^\infty(0,T;L^2(\nT^2))\cap L^2(0,T;H^2(\nT^2))$. Now we obtain estimates on $\partial_t \phi$: For any $\psi\in L^2(0,T; H^2(\nT^2))$,
    \begin{align} \label{scal_dtEst}
        \abs{\left\langle\partial_t \phi, \psi \right\rangle} &= 
        \abs{\int_0^T \partial_t \phi  \psi dt}  \\ &=
        \abs{
        \int_0^T \lp \frac{1}{2}\bee\lp \grad \phi \rp \cdot \grad \phi \rp \psi dt +
        \int_0^T \lp \lap \phi \rp \psi dt +
        \int_0^T \lp \lap \phi \rp \lap \psi dt
        } \notag  \\ &\leq 
        \frac{1}{2}\int_0^T \abs{\bee\lp \grad \phi \rp}\abs{\grad \phi}\abs{ \psi} dt +
        \int_0^T \abs{\lap \phi }\abs{ \psi} dt +
        \int_0^T \abs{\lap \phi }\abs{\lap \psi} dt \notag \\ &\leq 
        \frac{1}{2}\normLp{\infty}{\bee }\|\grad \phi \|_{L^2(0,T;L^2)}\|\psi\|_{L^2(0,T;L^2)} 
        \notag \\ &\quad + 
        \|\lap \phi \|_{L^2(0,T;L^2)}\|\psi\|_{L^2(0,T;L^2)} + 
        \|\lap \phi \|_{L^2(0,T;L^2)}\|\lap \psi\|_{L^2(0,T;L^2)} \notag \\ &\leq 
        \lp
        \frac{1}{2}\normLp{\infty}{\bee }\|\phi \|_{L^2(0,T;H^2)} +
        2\|\phi \|_{L^2(0,T;H^2)} \rp \|\psi\|_{L^2(0,T;H^2)} . \notag
    \end{align}
    It follows from Estimate \eqref{scal_L2H2} that $\|\partial_t \phi \|_{L^2(0,T;H^{-2})} < \infty$, hence 
    $\partial_t \phi \in L^2(0,T;H^{-2}(\nT^2))$. From this we deduce that a solution $\phi$ to \eqref{cKSE_scalar} exists, with 
    \[
    \phi \in C(0,T; L^2(\nT^2)) \cap L^2(0,T; H^2(\nT^2)).
    \]
    Now, let $\phi$ and $\psi$ be two solutions to \eqref{cKSE_scalar} with $\phi(0) = \psi(0) = \phi_0$. Let $\delta = \phi - \psi$. Then $\delta$ satisfies the equation 
    \begin{align} \label{scal_diff}
        \partial_t \delta + \lap^2 \delta = - 
        \lap \delta +
        \bee(\grad\psi)\cdot \grad \psi -
        \bee(\grad\phi)\cdot \grad \phi
    \end{align}
    with $\delta(0) = 0$. We can then rewrite the nonlinear term as 
    \begin{align} \label{scal_nonlinearDiff}
        \bee(\grad\psi)\cdot \grad \psi -
        \bee(\grad\phi)\cdot \grad \phi = 
        \lp \bee(\grad\psi) - \bee(\grad\phi) \rp \cdot \grad \psi -
        \bee(\grad\phi) \cdot \grad \delta .
    \end{align}
    We now insert \eqref{scal_nonlinearDiff} into \eqref{scal_diff} and apply integration by parts to obtain
    \begin{align}\label{scal_en_est1}
    &\quad
        \frac{1}{2}\frac{d}{dt}\normLp{2}{\delta}^2 +
        \normLp{2}{\lap \delta}^2 
        \\&\leq \notag
        \abs{\lp \lap \delta, \delta \rp} +
        \abs{\lp \lp \bee(\grad\psi) - \bee(\grad\phi) \rp \cdot \grad \psi, \delta \rp} + 
        \abs{\lp \bee(\grad\phi) \cdot \grad \delta, \delta \rp}.
    \end{align}
    In the second term, we use Condition \ref{eta_cond_Lip} of \ref{eta_def}, H\"older's, Ladyzhenskaya's, and Young's inequality to obtain
    \begin{align}\label{scal_diff_est1}
        \abs{\lp \lp \bee(\grad\psi) - \bee(\grad\phi) \rp \cdot \grad \psi, \delta \rp} &\leq 
        \normLp{4}{\bee(\grad\psi) - \bee(\grad\phi)}\normLp{2}{\grad \psi} \normLp{4}{\delta} \\ &\leq 
        \normLp{2}{\grad \psi} \normLp{4}{\delta}\normLp{4}{\grad \delta} \notag \\ &\leq 
        C \normLp{2}{\grad \psi} \normLp{2}{\delta}^\frac{1}{2} 
        \normLp{2}{\grad \delta} \normLp{2}{\lap \delta}^\frac{1}{2} \notag \\ &\leq 
        C \normLp{2}{\grad \psi} \normLp{2}{\delta} \normLp{2}{\lap \delta} \notag \\ &\leq 
        C \normLp{2}{\grad \psi}^2 \normLp{2}{\delta}^2 + \frac{1}{4}\normLp{2}{\lap \delta}^2 \notag
    \end{align}
    In the third term, we apply Condition \ref{eta_cond_bdd} of \ref{eta_def}, use Young's inequality, and use interpolation inequalities to obtain 
    \begin{align}\label{scal_diff_est2}
        \abs{\lp \bee(\grad\phi) \cdot \grad \delta, \delta \rp} &\leq 
        \normLp{\infty}{\bee} \normLp{2}{\grad\delta}\normLp{2}{\delta}  \\ &\leq 
        \normLp{\infty}{\bee} \normLp{2}{\delta}^\frac{3}{2} \normLp{2}{\lap\delta}^\frac{1}{2} \notag \\ &\leq 
        C\normLp{\infty}{\bee}^\frac{4}{3} \normLp{2}{\delta}^2 + 
        \frac{1}{4}\normLp{2}{\lap\delta}^2. \notag
    \end{align}
    After inserting \eqref{scal_diff_est1} and \eqref{scal_diff_est2} into \eqref{scal_en_est1} and rearranging the terms, the inequality becomes
    \begin{align} \label{scal_en_est2}
        \frac{d}{dt}\normLp{2}{\delta}^2 +
        \normLp{2}{\lap \delta}^2 &\leq 
        C
        \lp
        1 + \normLp{2}{\grad \psi}^2 +
        \normLp{\infty}{\bee}^\frac{4}{3} 
        \rp \normLp{2}{\delta}^2.
    \end{align}
    Then applying Gr\"onwall's inequality, we obtain
    \begin{align}\label{scal_uni1}
        \normLp{2}{\phi(t) - \psi(t)}^2 \leq 
        e^{\widetilde{K}_1(T)} \normLp{2}{\phi_0 - \psi_0}^2,
    \end{align}
    where 
    $\widetilde{K}_1(T) = \int_0^T 1 + \normLp{2}{\grad \psi(t)}^2 + \normLp{\infty}{\bee}^\frac{4}{3} dt$. Since $\psi \in L^2(0,T; H^2(\nT^2))$, and $\bee$ is bounded, $\widetilde{K}_1(T) < \infty$.
    So $\phi(t) = \psi(t)$ for all $t\in [0,T]$, hence solutions to \eqref{cKSE_scalar} are unique.
    Now, we integrate \eqref{scal_en_est2} on the interval $[0,T]$ and apply \eqref{scal_uni1}, which yields 
    \begin{align}\label{scal_uni2}
        \int_0^T \normLp{2}{\lap \phi(t) - \lap \psi(t)}^2 dt \leq \widetilde{K}_2\normLp{2}{\phi_0 - \psi_0}^2
    \end{align}
    for some $\widetilde{K}_2$ which depends on $T$, $\normLp{2}{\grad \psi(t)}$, and $\normLp{\infty}{\bee}$. 
    From estimates \eqref{scal_uni1} and \eqref{scal_uni2} we conclude that solutions depend continuously on the initial data in $L^\infty(0,T; L^2(\nT^2)) \cap L^2(0,T; H^2(\nT^2))$.
\end{proof}

\begin{theorem}\label{cKSE_scal_conv}
    Given $\phi_0 \in L^2(\T)$, let $\phi$ be the corresponding weak solution of the scalar KSE \eqref{KSE_scalar} with maximal time of existence and uniqueness $T^*$. 
    We assume that $\phi$ is in the natural energy space: for $T \in (0,T^*)$,
		\begin{align}
			\phi \in C([0, T]; L^2(\T)) \cap L^2(0, T; H^2(\T)).
		\end{align}
    Suppose $\bee$ satisfies \ref{eta_cond_Lip}, \ref{eta_cond_bdd}, and \ref{eta_cond_conv} of Definition \ref{eta_def}, so that there exists $C$, $\alpha >0$, and $\beta \in [0, 3 ]$ for which \eqref{pwise_conv} holds.
   Let $\phi^\epsilon$ be the corresponding weak solution of the scalar calmed KSE \eqref{cKSE_scal} with calming function $\bee$ and with initial data $\phi_0$. 
    Consider the convergence of $\phi^\epsilon$ to $\phi$ on the interval $[0,T]$.
    The difference $\phi^\epsilon - \phi$ satisfies
    \begin{align*}
        \| \phi^\epsilon - \phi\|_{L^\infty(0,T;L^2)} & \leq 
        K \epsilon^\alpha, \\
        \| \phi^\epsilon - \phi\|_{L^2(0,T;H^2)} & \leq 
        K' \epsilon^\alpha, 
    \end{align*}
    where $K, K' >0$ depend on $T$, $\beta$, and various norms of $\phi$, but not on $\epsilon$ or $\alpha$.
\end{theorem}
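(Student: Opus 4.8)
The plan is to mirror the proof of Theorem \ref{conv} as closely as possible, with $\phi$ and its calmed counterpart $\phi^\epsilon$ playing the roles of $\bu,\bu^\epsilon$, and with the \emph{gradient} fields $\grad\phi,\grad\phi^\epsilon$ playing the role of the advecting velocities. First I would set $\delta:=\phi^\epsilon-\phi$, subtract \eqref{cKSE_scal_EQ} from the scalar KSE \eqref{KSE_scalar} (using $\tfrac12|\grad\phi|^2=\tfrac12(\grad\phi\cdot\grad)\phi$), and test the difference equation with $\delta$. After integration by parts this yields an identity of exactly the same shape as \eqref{root}, namely $\tfrac12\frac{d}{dt}\normLp{2}{\delta}^2+\normLp{2}{\lap\delta}^2=\normLp{2}{\grad\delta}^2+N$, where $N=-\tfrac12\int_{\T}\big(\bee(\grad\phi^\epsilon)\cdot\grad\phi^\epsilon-\grad\phi\cdot\grad\phi\big)\,\delta\,d\bx$, and the term $\normLp{2}{\grad\delta}^2$ is absorbed via \eqref{grad_interp} exactly as in \eqref{int}.

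The core of the argument is a decomposition of $N$ into four pieces analogous to the terms $N_1,\dots,N_4$ in the proof of Theorem \ref{conv}. Writing $\bee(\grad\phi^\epsilon)\cdot\grad\phi^\epsilon-\grad\phi\cdot\grad\phi=\bee(\grad\phi^\epsilon)\cdot\grad\delta+(\bee(\grad\phi^\epsilon)-\grad\phi)\cdot\grad\phi$ and then splitting off the Lipschitz increments of $\bee$ produces the four integrands $(\bee(\grad\phi^\epsilon)-\bee(\grad\phi))\cdot\grad\delta$, $\bee(\grad\phi)\cdot\grad\delta$, $(\bee(\grad\phi^\epsilon)-\bee(\grad\phi))\cdot\grad\phi$, and $(\bee(\grad\phi)-\grad\phi)\cdot\grad\phi$, each paired against $\delta$. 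I would estimate the first three just as $N_1,N_2,N_3$ were handled: the unit Lipschitz bound \ref{eta_cond_Lip} controls the first and third by $\int|\grad\delta|^2|\delta|$ and $\int|\grad\delta||\grad\phi||\delta|$, while boundedness \ref{eta_cond_bdd} controls the second by $\normLp{\infty}{\bee}\int|\grad\delta||\delta|$. Using Ladyzhenskaya's inequality \eqref{ladyz}, the interpolation \eqref{grad_interp}, and Young's inequality, each is bounded by $\tfrac1{16}\normLp{2}{\lap\delta}^2$ plus a term of the form (time-integrable coefficient)$\times\normLp{2}{\delta}^2$ (the first piece producing a harmless $\normLp{2}{\delta}^6$ dominated by $\normLp{2}{\delta}^2$ once $\normLp{2}{\delta}\le1$); integrability in $t$ follows from $\phi\in L^2(0,T;H^2)$.

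The genuinely $\epsilon$-dependent term $N_4$ is where the main obstacle lies, and where the scalar case departs from the vector case. Condition \ref{eta_cond_conv} gives $|\bee(\grad\phi)-\grad\phi|\le C\epsilon^\alpha|\grad\phi|^\beta$, whence $|N_4|\le C\epsilon^\alpha\int_{\T}|\grad\phi|^{\beta+1}|\delta|\,d\bx$. In Theorem \ref{conv} the analogous factor $\|\bu\|_{L^\infty}^\beta$ was tamed by Agmon's inequality \eqref{agmon}; but here the advecting field is $\grad\phi$, whose $L^\infty$ norm is \emph{not} controlled by the natural energy $\phi\in C([0,T];L^2)\cap L^2(0,T;H^2)$, since the embedding $H^1\hookrightarrow L^\infty$ just fails in two dimensions. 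I would therefore replace the Agmon step by a Gagliardo--Nirenberg estimate $\normLp{2(\beta+1)}{\grad\phi}^{\beta+1}\le C\normLp{2}{\grad\phi}\,\normLp{2}{\lap\phi}^{\beta}$ followed by \eqref{grad_interp}, and then verify that the resulting coefficient $\epsilon^\alpha\normLp{2}{\grad\phi}\normLp{2}{\lap\phi}^{\beta}$ is time-integrable. The exponent $\beta+\tfrac12$ on $\normLp{2}{\lap\phi}$ that appears this way is more demanding than the $\tfrac\beta2+\tfrac12$ of the vector proof, so the admissible range of $\beta$ (and the precise norms of $\phi$ required) is the delicate point and must be checked carefully against the value $\beta\in[0,3]$ asserted in the statement.

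Finally, I would assemble the differential inequality, divide through by $\normLp{2}{\delta}$ as in the passage from \eqref{en} to \eqref{bb}, and run the continuity/bootstrapping argument of Lemma \ref{boot} verbatim: take $H(t)$ to be $\normLp{2}{\delta(t)}\le1$ and $C(t)$ to be $\normLp{2}{\delta(t)}\le e^{A(T)}B(T)\epsilon^\alpha\le\tfrac12$, with $A,B$ the time integrals of the coefficients collected above, noting that $\delta(0)=0$ supplies hypothesis (d) and that continuity of $t\mapsto\normLp{2}{\delta(t)}$ on the energy space supplies (c). This delivers the $L^\infty(0,T;L^2)$ estimate; integrating the energy inequality in time and feeding this bound back in, exactly as in the closing display of the proof of Theorem \ref{conv}, then yields the $L^2(0,T;H^2)$ estimate and completes the proof.
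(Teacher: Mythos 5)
Your proposal follows essentially the same route as the paper's proof: the same four-term decomposition of the nonlinear difference, the same Ladyzhenskaya/Young estimates producing the harmless $\normLp{2}{\delta}^6$ term absorbed under the ansatz $\normLp{2}{\delta}\leq 1$, the same replacement of Agmon's inequality by the Sobolev/Gagliardo--Nirenberg bound $\normLp{2\beta+2}{\grad\phi}^{\beta+1}\leq C\normLp{2}{\grad\phi}\normHs{1}{\grad\phi}^{\beta}\leq C\normLp{2}{\phi}^{1/2}\normHs{2}{\phi}^{\beta+1/2}$ for the $\epsilon$-term, and the same bootstrap via Lemma \ref{boot} followed by time integration for the $L^2(0,T;H^2)$ bound. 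The one ``delicate point'' you flag rather than resolve is genuine: the paper's own proof notes that $\normLp{2}{\phi}^{1/2}\normHs{2}{\phi}^{\beta+1/2}$ is integrable only for $\beta\in[0,\tfrac{3}{2}]$, so the range $\beta\in[0,3]$ asserted in the theorem statement is not actually covered by the argument given there, and your caution points at a real discrepancy in the paper rather than a gap in your approach.
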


\begin{proof}
    This proof has only minor variations from the proof of Theorem \ref{conv}. We set $\delta^\epsilon = \phi - \phi^\epsilon$ take the difference between \eqref{KSE_scalar} and \eqref{cKSE_scal_EQ}, and take the inner product with $\delta^\epsilon$. to obtain 
    \begin{align}
    \frac{d}{dt}\normLp{2}{\delta^\epsilon}^2 + \normLp{2}{\lap \delta^\epsilon}^2  \leq 
    \normLp{2}{\delta^\epsilon}^2 + N_1 + N_2 + N_3 + N_4,
    \label{cKSE_scal_conv1}
    \end{align}
    where 
    \begin{align*}
        N_1 &= 
        \abs{\lp  
            \lp \bee(\grad\phi^\epsilon) - \bee(\grad\phi) \rp 
            \cdot \grad \delta^\epsilon, \delta^\epsilon
        \rp} \leq 
        C \normLp{2}{\delta^\epsilon}^6 + \frac{3}{4}\normLp{2}{\lap \delta^\epsilon}^2, \\ 
        N_2 &= 
        \abs{\lp
            \lp \bee(\grad\phi^\epsilon) - \bee(\grad\phi) \rp 
            \cdot \grad \phi, \delta^\epsilon
        \rp} \leq 
        C\normLp{2}{\phi}^\frac{2}{5} \normLp{2}{\lap \phi}^\frac{6}{5} \normLp{2}{\delta^\epsilon}^2 + 
        \frac{1}{8}\normLp{2}{\lap \delta^\epsilon}^2, \\
        N_3 &= 
        \abs{\lp
            \bee(\grad\phi) \cdot \grad \delta^\epsilon, \delta^\epsilon
        \rp} \leq 
        C\normLp{2}{\phi}^\frac{1}{4}\normLp{2}{\lap\phi}^\frac{3}{4}
        \normLp{2}{\delta^\epsilon}^2 + 
        \frac{1}{16}\normLp{2}{\lap\delta^\epsilon}^2,
    \end{align*}
    and 
    \begin{align*}
    N_4 &= 
    \abs{\lp
        \lp \bee(\grad\phi) - \grad \phi \rp \cdot \grad \phi,
        \delta^\epsilon
    \rp } \\ & \leq
    C \epsilon^\alpha \int_{\nT^2} \abs{\grad \phi}^{\beta+1}\abs{\delta^\epsilon}d\bx \\ &\leq 
    C \epsilon^\alpha \normLp{2\beta+2}{\grad \phi}^{\beta+1}  \normLp{2}{\delta^\epsilon}.
    \end{align*}
    Applying the Sobolev inequality, we deduce that  
    \[
    \normLp{2\beta+2}{\grad\phi}^{\beta+1} \leq 
    C \normLp{2}{\grad\phi} \normHs{1}{\grad\phi}^\beta
    \leq C \normLp{2}{\phi}^\frac{1}{2}\normHs{2}{\phi}^{\beta + \frac{1}{2}}.
    \]
    Inserting our bounds for each $N_i$ into \eqref{cKSE_scal_conv1} and rearranging then yields
    \begin{align}\label{cKSE_scal_conv2}
        \frac{d}{dt}\normLp{2}{\delta^\epsilon}^2 + \frac{1}{16}\normLp{2}{\lap \delta^\epsilon}^2  &\leq 
        C \normLp{2}{\delta^\epsilon}^6 + 
        \normLp{2}{\delta^\epsilon}^2 +
        C \epsilon^\alpha 
        \normLp{2}{\phi}^\frac{1}{2}\normHs{2}{\phi}^{\beta + \frac{1}{2}}
        \normLp{2}{\delta^\epsilon}  \\ & \quad + 
        C\lp 
        \normLp{2}{\phi}^\frac{2}{5}\normLp{2}{\lap\phi}^\frac{6}{5} + 
        \normLp{2}{\phi}^\frac{1}{4}\normLp{2}{\lap\phi}^\frac{3}{4} 
        \rp\normLp{2}{\delta^\epsilon}^2. \notag
    \end{align}
    Now we apply the ansatz 
    \[ 
    \normLp{2}{\delta^\epsilon} < 1
    \]
    to obtain the bound 
    \[
    \normLp{2}{\delta^\epsilon}^6 \leq \normLp{2}{\delta^\epsilon}^2.
    \]
    We apply this estimate to \eqref{cKSE_scal_conv2} and eliminate $\normLp{2}{\delta^\epsilon}$ from each term to obtain 
    \begin{align}\label{cKSE_scal_conv3}
        \frac{d}{dt}\normLp{2}{\delta^\epsilon} &\leq 
        C \normLp{2}{\phi}^\frac{1}{2}
        \normHs{2}{\phi}^{\beta + \frac{1}{2}}
        \epsilon^\alpha 
        \\&\quad  + 
        C\lp 
        1 +
        \normLp{2}{\phi}^\frac{2}{5}\normLp{2}{\lap\phi}^\frac{6}{5} + 
        \normLp{2}{\phi}^\frac{1}{4}\normLp{2}{\lap\phi}^\frac{3}{4} 
        \rp\normLp{2}{\delta^\epsilon}.
        \notag
    \end{align}
    The term 
    \[
    1 +
        \normLp{2}{\phi}^\frac{2}{5}\normLp{2}{\lap\phi}^\frac{6}{5} + 
        \normLp{2}{\phi}^\frac{1}{4}\normLp{2}{\lap\phi}^\frac{3}{4} 
    \]
    is always integrable and the term
    \[
    \normLp{2}{\phi}^\frac{1}{2}
    \normHs{2}{\phi}^{\beta + \frac{1}{2}}
    \]
    is integrable for $\beta \in [0, \frac{3}{2}]$. It now follows from Gr\"onwall's inequality that 
    \begin{align}
        \normLp{2}{\delta^\epsilon(t)} \leq
        e^{A(t)}\normLp{2}{\delta^\epsilon(0)} + 
        e^{A(t)}B(t)\epsilon^\alpha \leq e^{A(T)}B(T)\epsilon^\alpha,
    \end{align}
    using the fact that $\delta^\epsilon(0) = 0$, and with 
    \begin{align*}
        A(t) &= C\int_0^t
        1 +
        \normLp{2}{\phi}^\frac{2}{5}\normLp{2}{\lap\phi}^\frac{6}{5} + 
        \normLp{2}{\phi}^\frac{1}{4}\normLp{2}{\lap\phi}^\frac{3}{4} ds, \\
        B(t) &= C\int_0^t \normLp{2}{\phi}^\frac{1}{2}
        \normHs{2}{\phi}^{\beta + \frac{1}{2}} ds. 
    \end{align*}
    By taking $\epsilon$ sufficiently small, we have for all $0\leq t \leq T$
    \[
    \normLp{2}{\delta^\epsilon(t)} < 1.
    \]
    It follows from a bootstrapping argument that 
    \begin{align}
        \|\delta^\epsilon(t)\|_{L^\infty(0,T;L^2)} \leq e^{A(T)}B(T)\epsilon^\alpha.
    \label{cKSE_scal_conv_LinfL2}
    \end{align}
    Now we integrate \eqref{cKSE_scal_conv2} on $[0,T]$, again using that $\normLp{2}{\delta^\epsilon}^6 \leq \normLp{2}{\delta^\epsilon}^2$, and apply to obtain 
    \begin{align}
        \int_0^T\normLp{2}{\lap\delta^\epsilon}^2 dt &\leq 
        C\epsilon^\alpha B(T) e^{A(T)}B(T)\epsilon^\alpha + 
        A(T)\lp e^{A(T)}B(T)\epsilon^\alpha \rp^2 
         \\ &\leq 
        K(T)^2\epsilon^{2\alpha}, \notag
    \end{align}
    where 
    \[
    K(T)^2 = CB(T)^2e^{A(T)} + A(T)B(T)e^{A(T)}.
    \]
    Therefore we obtain 
    \begin{align}
        \|\delta^\epsilon \|_{L^2(0,T;H^2)} \leq 
        \lp Te^{A(T)}B(T) + K(T)\rp \epsilon^\alpha.
    \end{align}
\end{proof}

 \section{Computational Results}\label{sec_Computational_Results}
In this section, we examine the calmed Kuramoto-Sivashinsky equations computationally via several simulations, where the calming function $\bee = \bee_i$ is described in $\eqref{eta_choices}$.  We include snapshots of the evolution of solutions for the different choices of $\bee$ in Figure \ref{fig:Dynamics_AllTypes}, and for different choices of $\epsilon$ in Figure \ref{fig:Dynamics_type3} (we show results for $\bee_3$ only for the sake of brevity; $\bee_1$ and $\bee_2$ yielded qualitatively similar results). The former illustrates the different effects of the choice of $\bee$ on the dynamics, while the latter indicates the uniform convergence of $\bu^\epsilon$ to $\bu$.

In addition, we examine convergence rates in $L^\infty(0,T;L^2)$, $L^\infty(0,T;L^\infty)$, and $L^2(0,T;H^2)$ for $\bee_1$ (Figure \ref{fig:ic1t1}), $\bee_2$ (Figure \ref{fig:ic1t2}), and $\bee_3$ (Figure \ref{fig:ic1t3}) with initial data \eqref{IniData0} as $\epsilon\rightarrow0^+$ (for simplicity, we set $T=1$, since with all our initial data, solutions to KSE appear to be quite stable on $[0,1]$).  We find that the powers on the $L^\infty(0,T;L^2)$ and $L^2(0,T; H^2)$ convergence rates in Corollary \ref{conv_particular} appear to be sharp. 

Finally, in Figures \ref{fig:ic2t1}, \ref{fig:ic2t2}, and \ref{fig:ic2t3} we check the robustness of the convergence with respect to larger initial data \eqref{IniData1} for $\bee_1$, $\bee_2$, and $\bee_3$. In comparing initial data \eqref{IniData0} with \eqref{IniData1}, we find very little qualitative variation in the error rates, indicating that changes in initial data will only marginally change the error between solutions to KSE and solutions to calmed KSE for $\epsilon > 0$ sufficiently small.

\subsection{Numerical Methods}
    All computations were done in Matlab (R$2021$a) using pseudo-spectral methods with the standard $2/3's$ dealiasing for the nonlinear term. To evolve the system, we used a well-known modification of the Runge-Kutta-4 time-stepping scheme adapted to handle the linear terms implicitly via an integrating factor to handle the nonlinear terms implicitly (see, e.g., \cite{Kassam_Trefethen_2005}) 
    with time step $\Delta t\approx 4.2943\times10^{-4}$ chosen to respect the maximum advective CFL condition in Figures \ref{fig:Dynamics_AllTypes}, \ref{fig:Dynamics_type3}, \ref{fig:ic1t1}, \ref{fig:ic1t2}, and \ref{fig:ic1t3}, with later figures having a rescaled time step $\Delta t = 1.0736\times10^{-4}$. Our simulations for KSE and cKSE were resolved\footnote{Note: For the Kuramoto-Sivashinsky equations (calmed or otherwise), even in fairly chaotic regimes, one often does not need especially high resolution, due to the strong hyperdiffusion term.  Moreover, so long as the solution is  well-resolved, which we take to mean that the energy spectrum at the modes higher than the 2/3's dealiasing cut-off is at or below machine precision (roughly $2.22\times10^{-16}$), increasing the resolution only increases round-off error, due to the additional computations being performed.  Hence, to minimize roundoff error, we purposely chose the fairly low resolution of $128^2$, although our higher-resolution tests, not reported here, produced qualitatively similar results.} with a spatial mesh of $128^2$. All computations were done using the nondimensionalized calmed Kuramoto-Sivashinsky equations, 
    \begin{subequations}
    \begin{align}
    \dt{\bu}+(\boldeta^\epsilon(\bu)\cdot\nabla)\bu + \lambda \triangle\bu + \triangle^2\bu &= \mathbf{0},
    \\
    \bu(\bx,0)&=\bu_0(\bx),
    \end{align}
    \end{subequations}
    over the periodic domain $\Omega = [-\pi, \pi)^2$ for $\lambda > 0$. \\
    Throughout this section, a \emph{type} $1$, \emph{type} $2$, or \emph{type} $3$ solution is a solution to calmed KSE with calming function $\bee_1$, $\bee_2$, or $\bee_3$ respectively. 
\subsection{Simulations}

Here, we take initial conditions to be 
\begin{align} \label{IniData0}
    \bu_0(x,y) = \binom{\cos(x+y) + \cos(x)}{ \cos(x+y) + \cos(y) }
\end{align}
and all color plots seen below are plots of the magnitude $|\bu|=\abs{\lp u, v \rp} = \sqrt{u^2 + v^2}$.
In all plots of solutions, the horizontal axis corresponds to the $y$-axis and the vertical axis corresponds to the $x$-axis.

Our choice for initial data $\bu_0$ was motivated by the choice of scalar initial data found in \cite{Kalogirou_Keaveny_Papageorgiou_2015},  \cite{Larios_Yamazaki_2020_rKSE}, and \cite{Larios_Rahman_Yamazaki_2021_JNLS_KSE_PS}; namely,
\[\phi_0(x,y) = \sin(x+y) + \sin(x) + \sin(y).\] 
Hence, we set $\bu_0 = \grad \phi_0$.



\begin{figure}[ht]
\centering
    \begin{subfigure}{.242\textwidth}
          \centering
          \includegraphics[width=1\linewidth,trim = 47mm 21mm 42mm 17mm, clip]{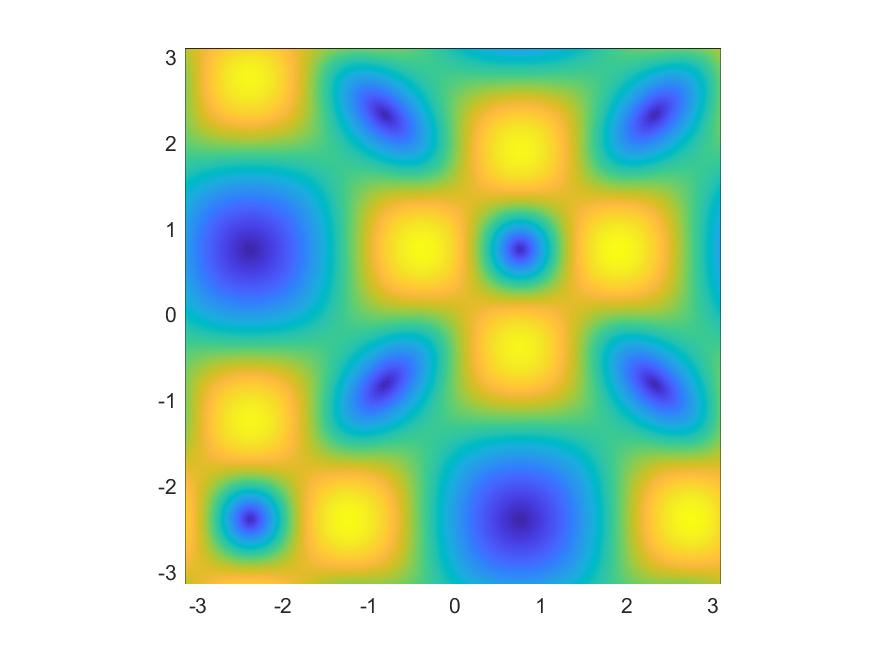} 
          \caption{Type $1$}
          \label{fig:DynType1}
    \end{subfigure}
    \begin{subfigure}{.242\textwidth}
          \centering
          \includegraphics[width=1\linewidth,trim = 47mm 21mm 42mm 17mm, clip]{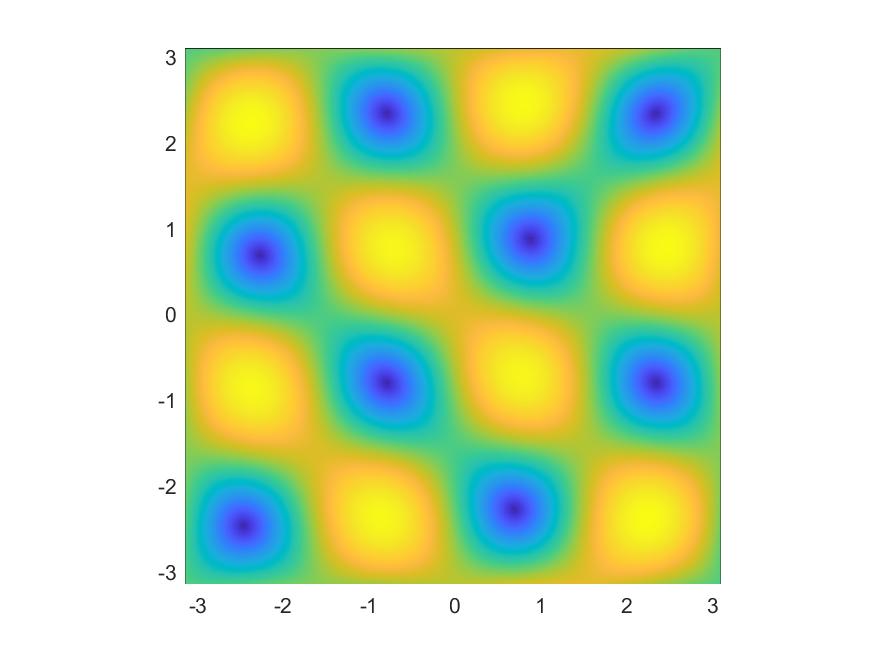}
          \caption{Type $2$}
          \label{fig:DynType2}
    \end{subfigure}
    \begin{subfigure}{.242\textwidth}
          \centering
          \includegraphics[width=1\linewidth,trim = 47mm 21mm 42mm 17mm, clip]{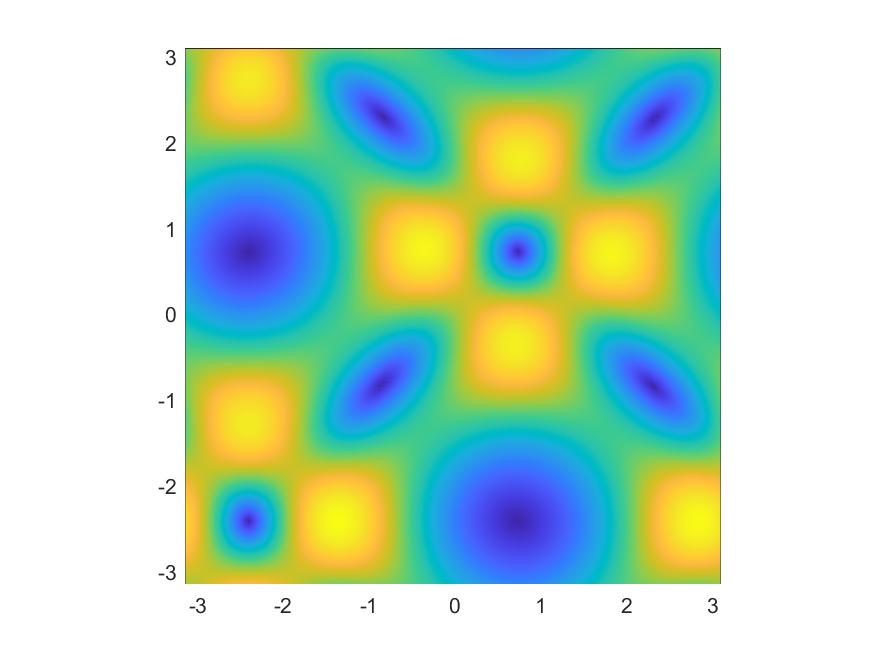}
          \caption{Type $3$}
          \label{fig:DynType3}
    \end{subfigure}
    \begin{subfigure}{.242\textwidth}
          \centering
          \includegraphics[width=1\linewidth,trim = 47mm 21mm 42mm 17mm, clip]{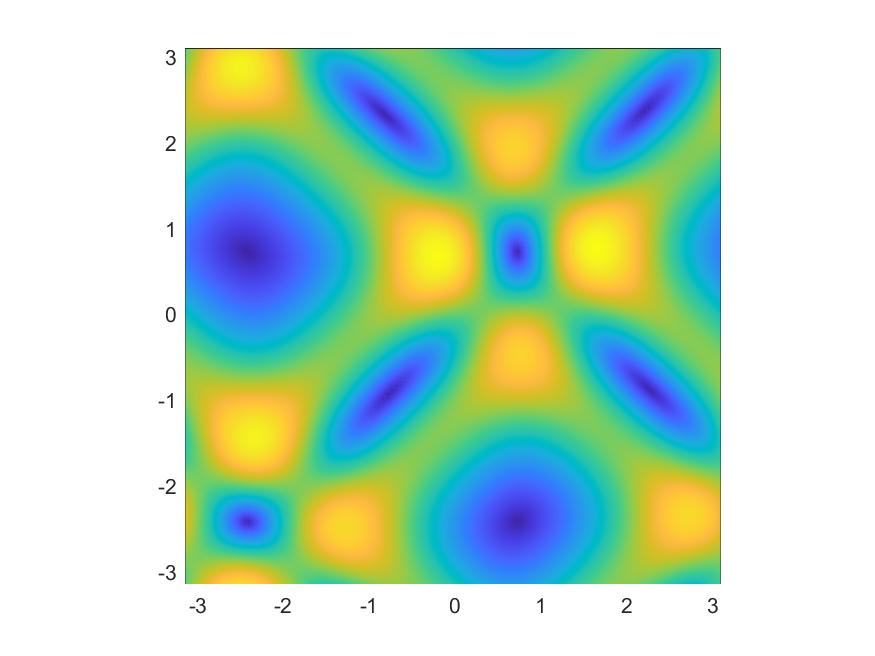}
          \caption{KSE}
          \label{fig:DynKSE}
    \end{subfigure}
    \caption{
    Solutions to calmed KSE of each type compared with a solution to KSE at time $t = 2$, with $\epsilon = 0.1$, $\lambda = 4.1$, and $\bu_0$ given by \eqref{IniData0}. 
    }\label{fig:Dynamics_AllTypes} 
\end{figure}
\FloatBarrier

Though some differences can be seen among the images above, one can see that each type of calmed KSE solution approximates the overall behavior of a KSE solution. One can also observe that the accuracy of the approximation varies by type. 

\begin{figure}[ht]
\centering
    \begin{subfigure}{.242\textwidth}
          \centering
          \includegraphics[width=1.0\linewidth,trim = 47mm 21mm 42mm 17mm, clip]{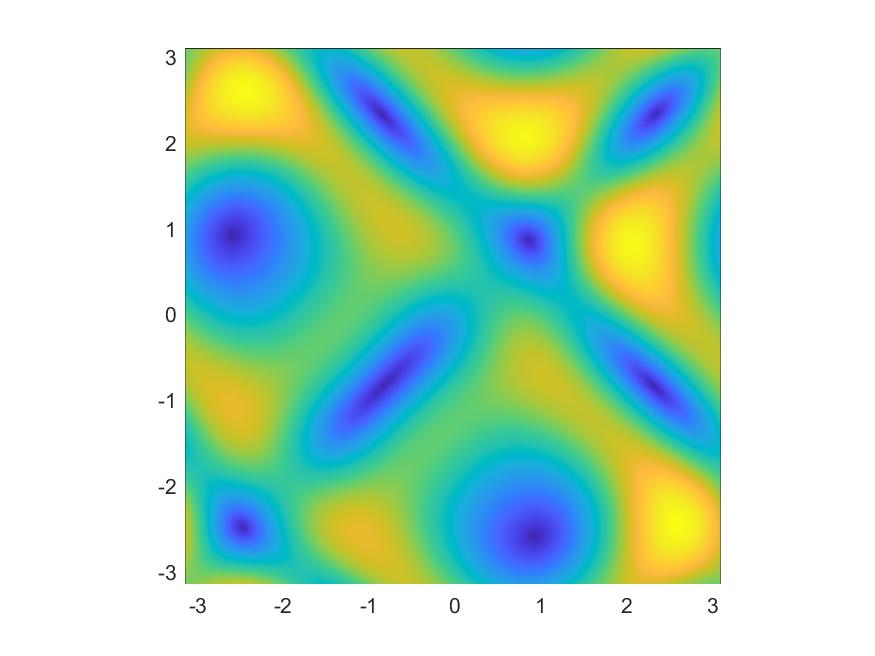} 
    \end{subfigure}
    \begin{subfigure}{.242\textwidth}
          \centering
          \includegraphics[width=1.0\linewidth,trim = 47mm 21mm 42mm 17mm, clip]{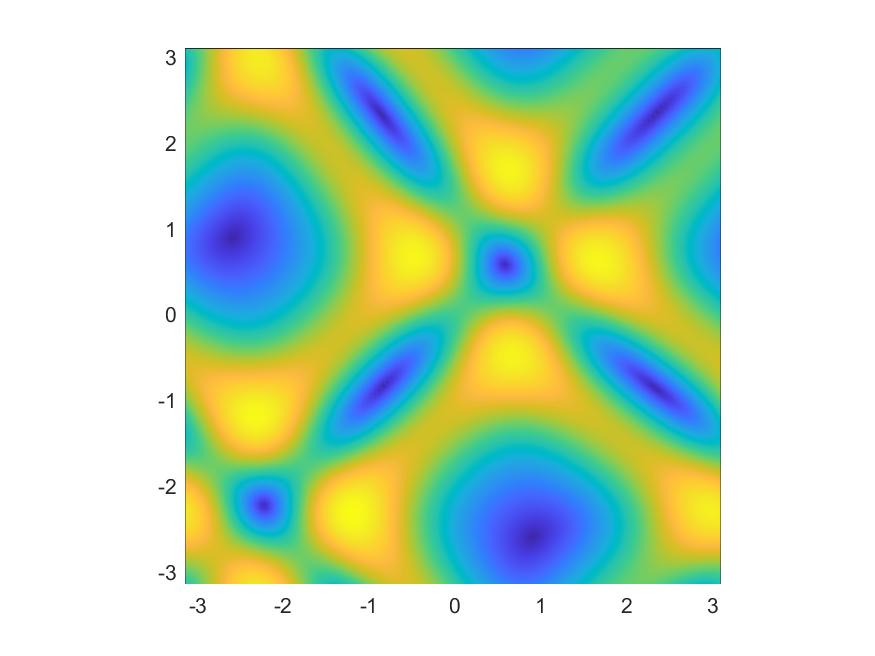}
    \end{subfigure}
    \begin{subfigure}{.242\textwidth}
          \centering
          \includegraphics[width=1.0\linewidth,trim = 47mm 21mm 42mm 17mm, clip]{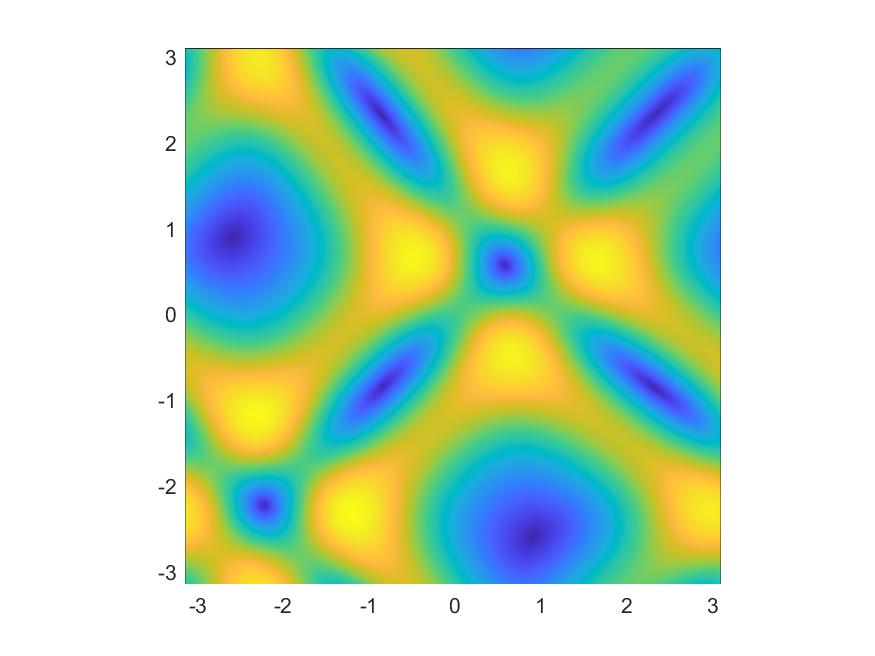}
    \end{subfigure}
    \begin{subfigure}{.242\textwidth}
          \centering
          \includegraphics[width=1.0\linewidth,trim = 47mm 21mm 42mm 17mm, clip]{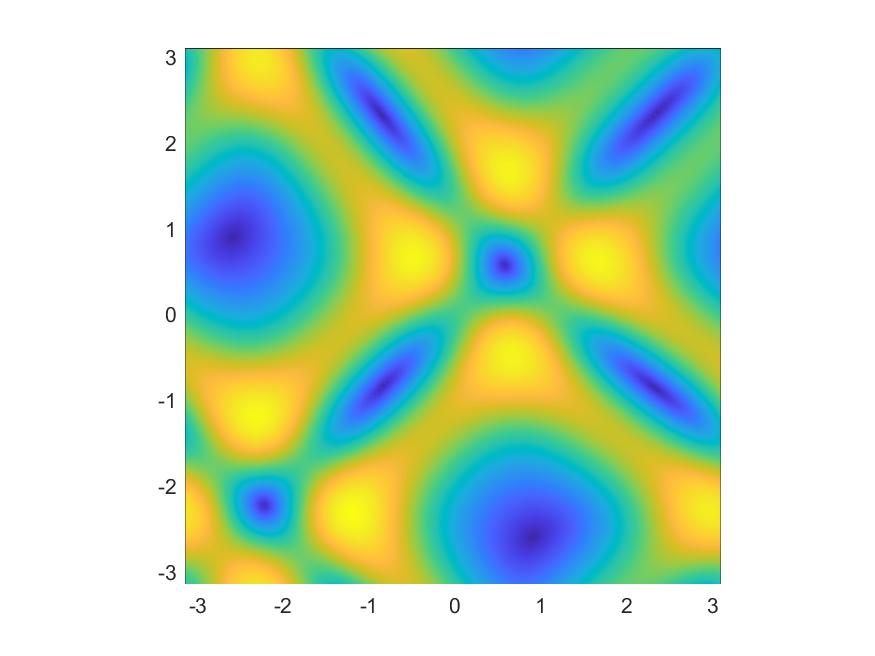}
    \end{subfigure}
    \begin{subfigure}{.242\textwidth}
          \centering
          \includegraphics[width=1.0\linewidth,trim = 47mm 21mm 42mm 17mm, clip]{KSE_lam_4p1_eps_0p1_N_128_type_3_t_2p00}
    \end{subfigure}
    \begin{subfigure}{.242\textwidth}
          \centering
          \includegraphics[width=1.0\linewidth,trim = 47mm 21mm 42mm 17mm, clip]{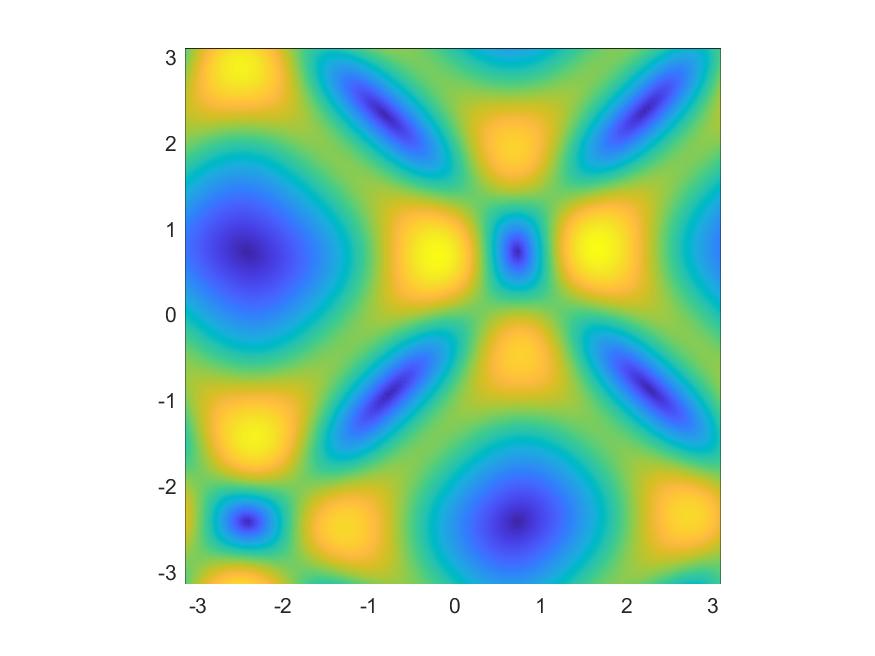}
    \end{subfigure}
    \begin{subfigure}{.242\textwidth}
          \centering
          \includegraphics[width=1.0\linewidth,trim = 47mm 21mm 42mm 17mm, clip]{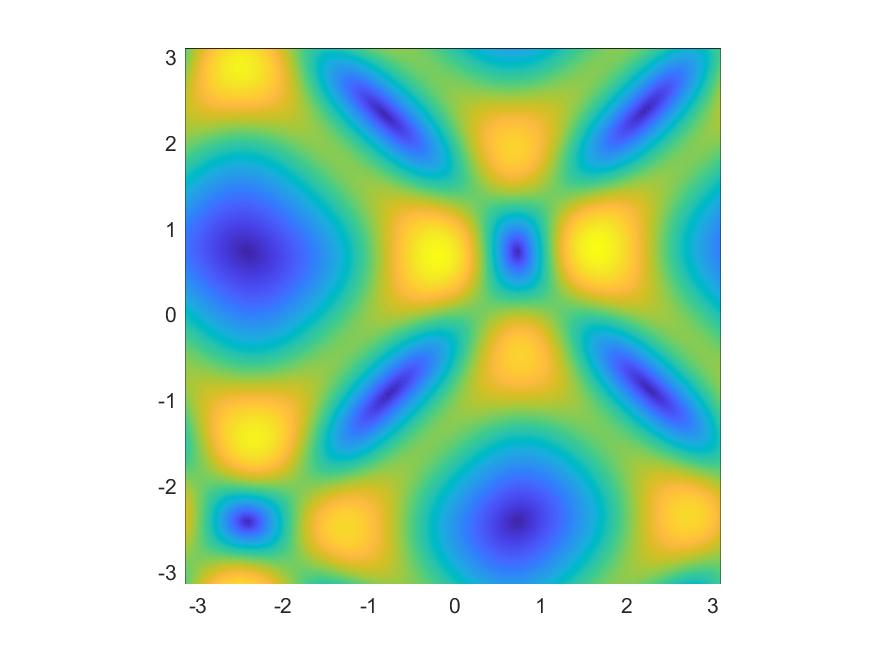}
    \end{subfigure}
    \begin{subfigure}{.242\textwidth}
          \centering
          \includegraphics[width=1.0\linewidth,trim = 47mm 21mm 42mm 17mm, clip]{KSE_lam_4p1_eps_0p1_N_128_type_0_t_2p00}
    \end{subfigure}
    \begin{subfigure}{.242\textwidth}
          \centering
          \includegraphics[width=1.0\linewidth,trim = 47mm 21mm 42mm 17mm, clip]{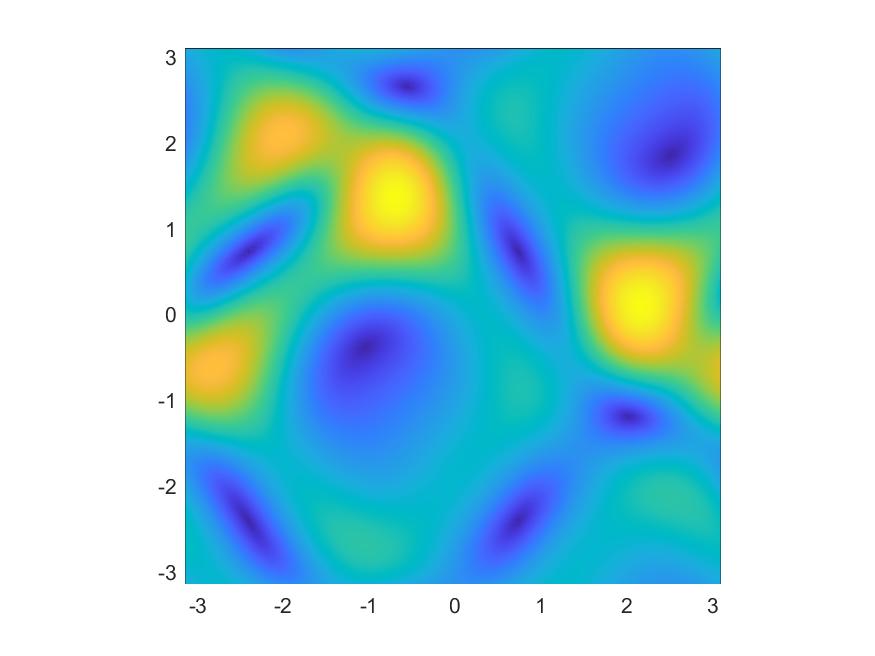} 
    \end{subfigure}
    \begin{subfigure}{.242\textwidth}
          \centering
          \includegraphics[width=1.0\linewidth,trim = 47mm 21mm 42mm 17mm, clip]{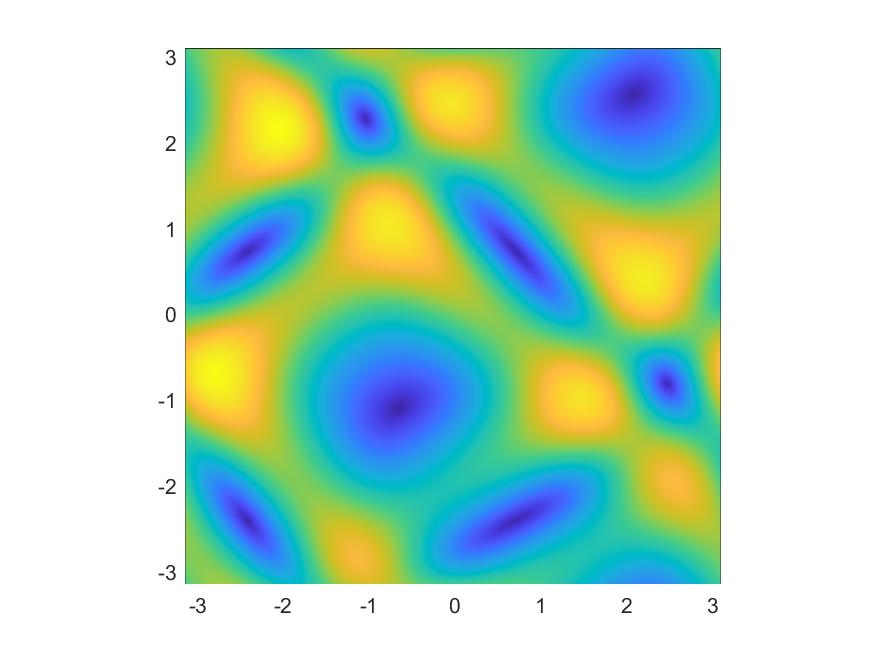}
    \end{subfigure}
    \begin{subfigure}{.242\textwidth}
          \centering
          \includegraphics[width=1.0\linewidth,trim = 47mm 21mm 42mm 17mm, clip]{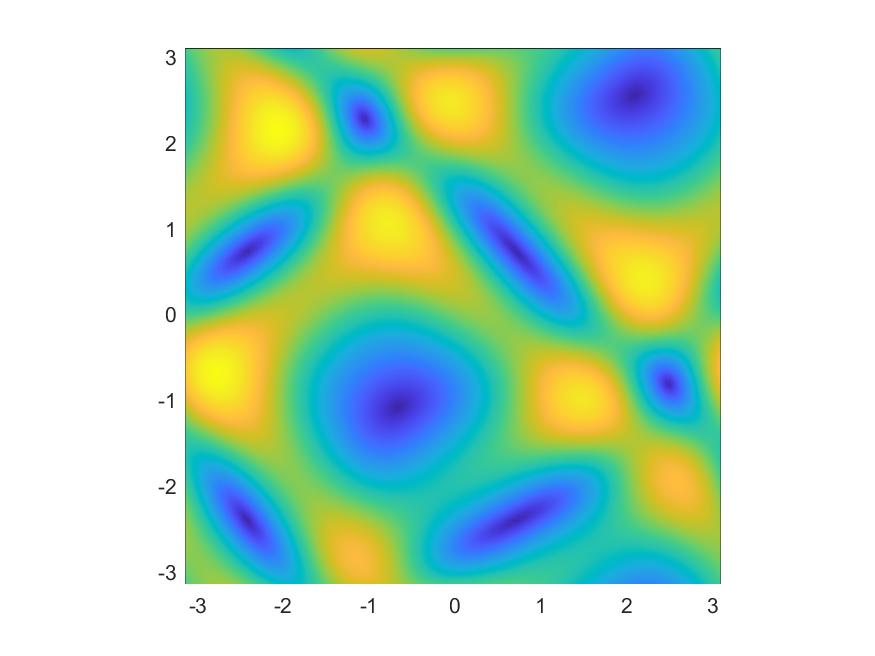}
    \end{subfigure}
    \begin{subfigure}{.242\textwidth}
          \centering
          \includegraphics[width=1.0\linewidth,trim = 47mm 21mm 42mm 17mm, clip]{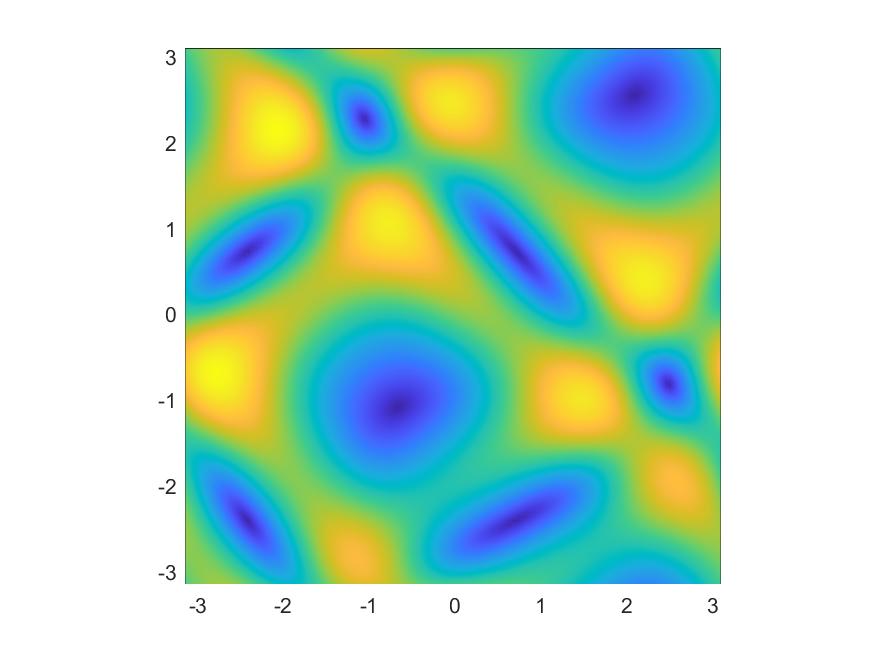}
    \end{subfigure}
    \begin{subfigure}{.242\textwidth}
          \centering
          \includegraphics[width=1.0\linewidth,trim = 47mm 21mm 42mm 17mm, clip]{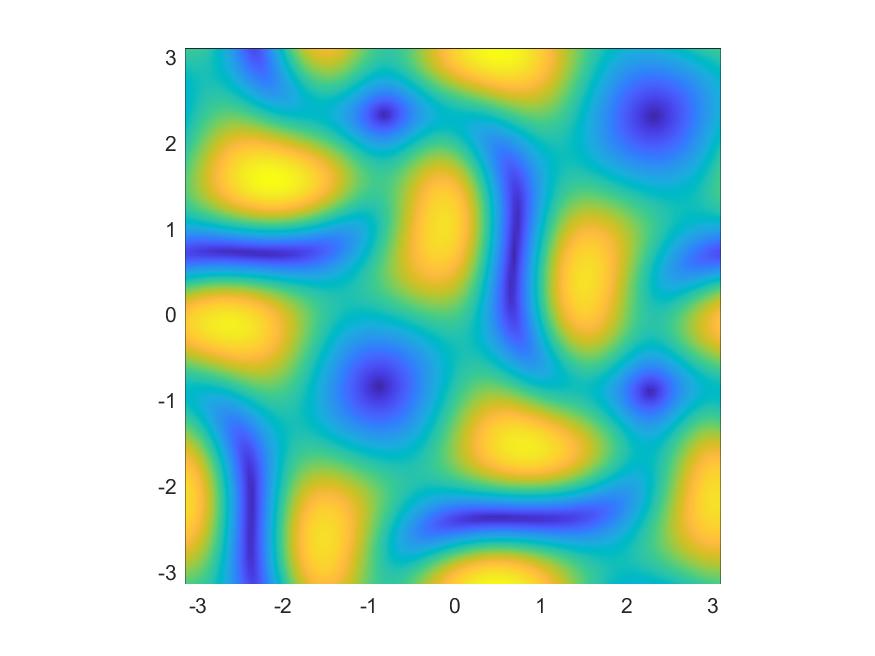} 
    \end{subfigure}
    \begin{subfigure}{.242\textwidth}
          \centering
          \includegraphics[width=1.0\linewidth,trim = 47mm 21mm 42mm 17mm, clip]{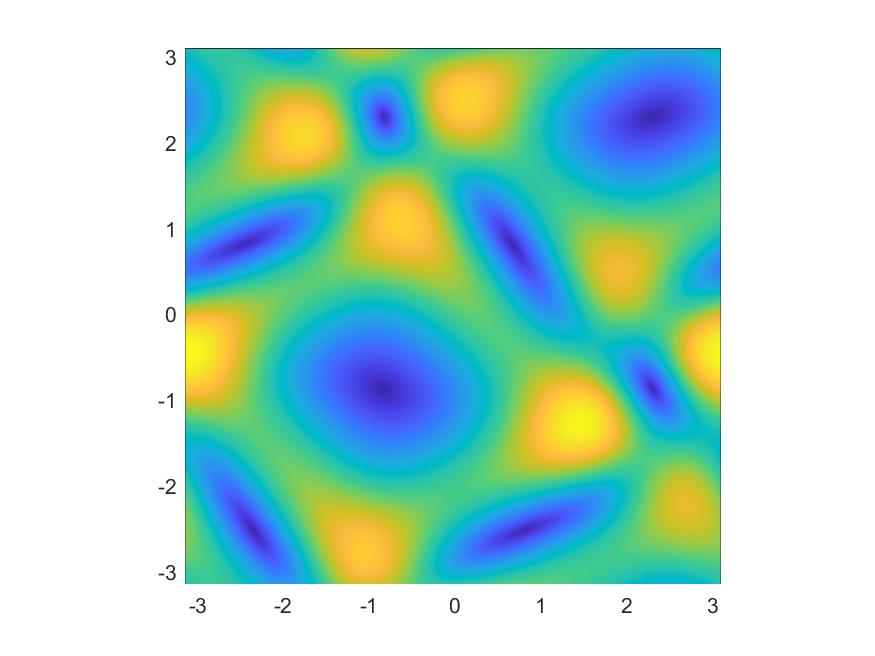}
    \end{subfigure}
    \begin{subfigure}{.242\textwidth}
          \centering
          \includegraphics[width=1.0\linewidth,trim = 47mm 21mm 42mm 17mm, clip]{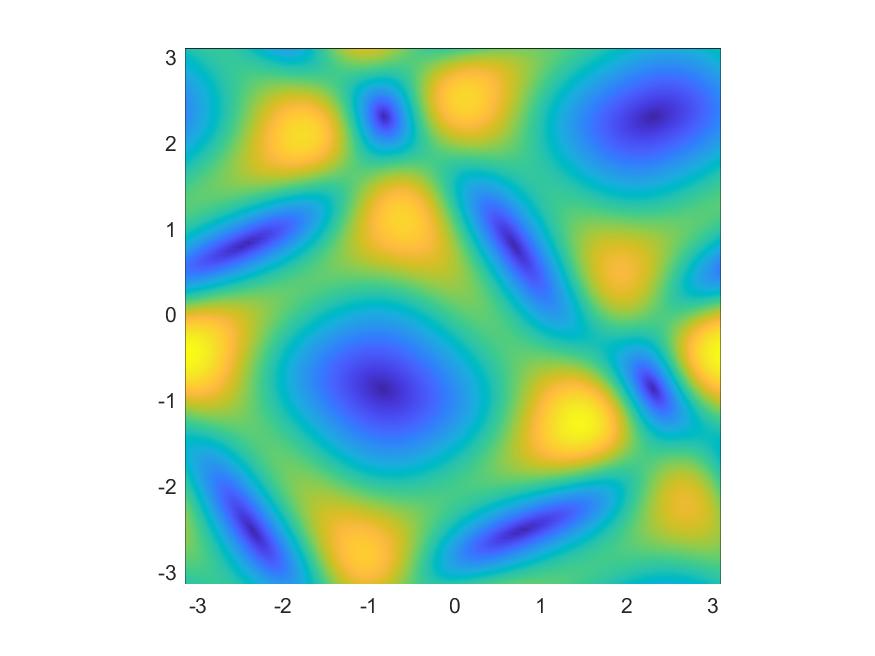}
    \end{subfigure}
    \begin{subfigure}{.242\textwidth}
          \centering
          \includegraphics[width=1.0\linewidth,trim = 47mm 21mm 42mm 17mm, clip]{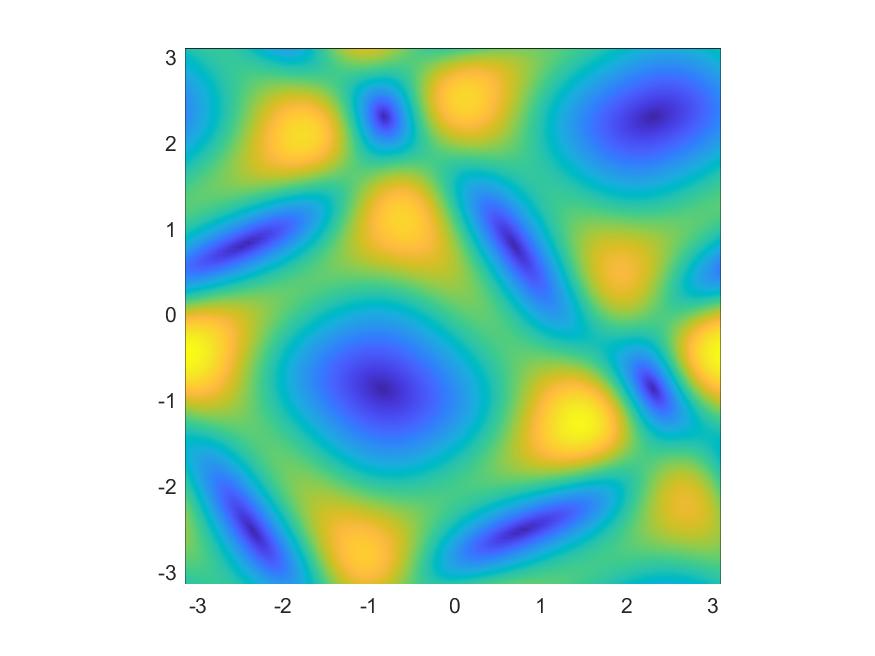}
    \end{subfigure}
\begin{subfigure}{.242\textwidth}
      \centering
      \includegraphics[width=1.0\linewidth,trim = 47mm 21mm 42mm 17mm, clip]{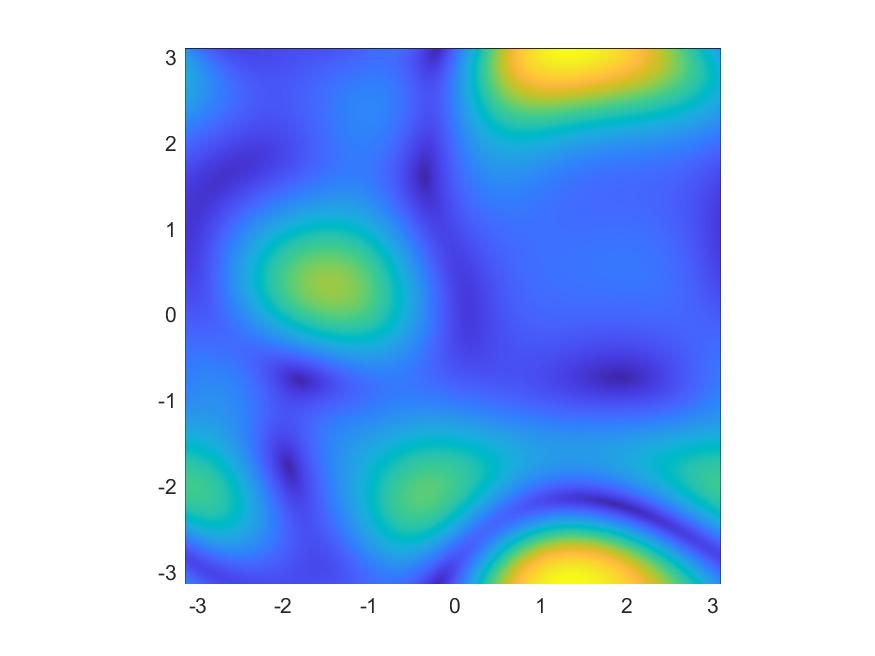} 
      \caption{$\epsilon_1 = 0.1$}
      \label{fig:cf1_eps1}
\end{subfigure}
\begin{subfigure}{.242\textwidth}
      \centering
      \includegraphics[width=1.0\linewidth,trim = 47mm 21mm 42mm 17mm, clip]{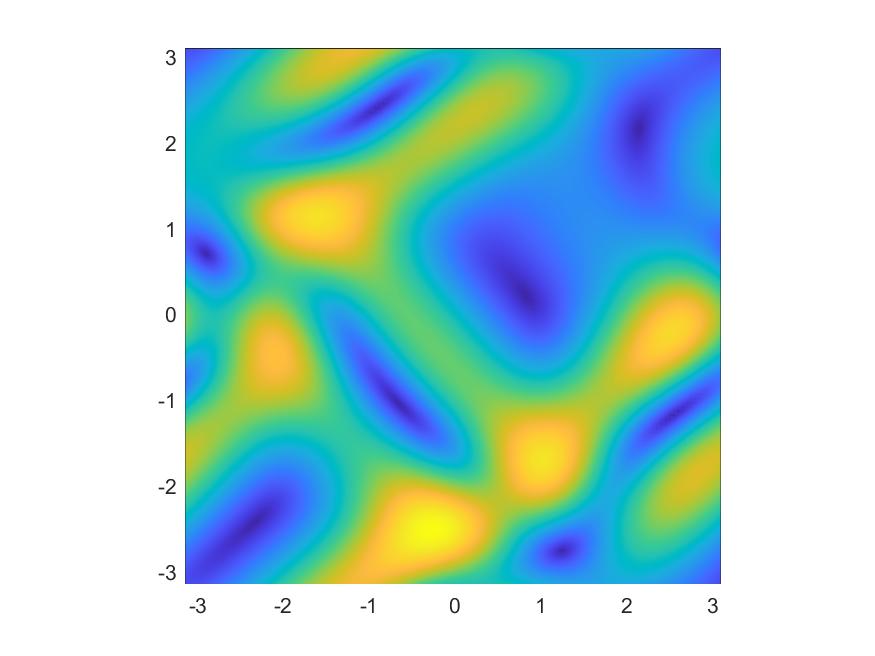}
      \caption{$\epsilon_2 = 0.01$}
      \label{fig:cf1_eps2}
\end{subfigure}
\begin{subfigure}{.242\textwidth}
      \centering
      \includegraphics[width=1.0\linewidth,trim = 47mm 21mm 42mm 17mm, clip]{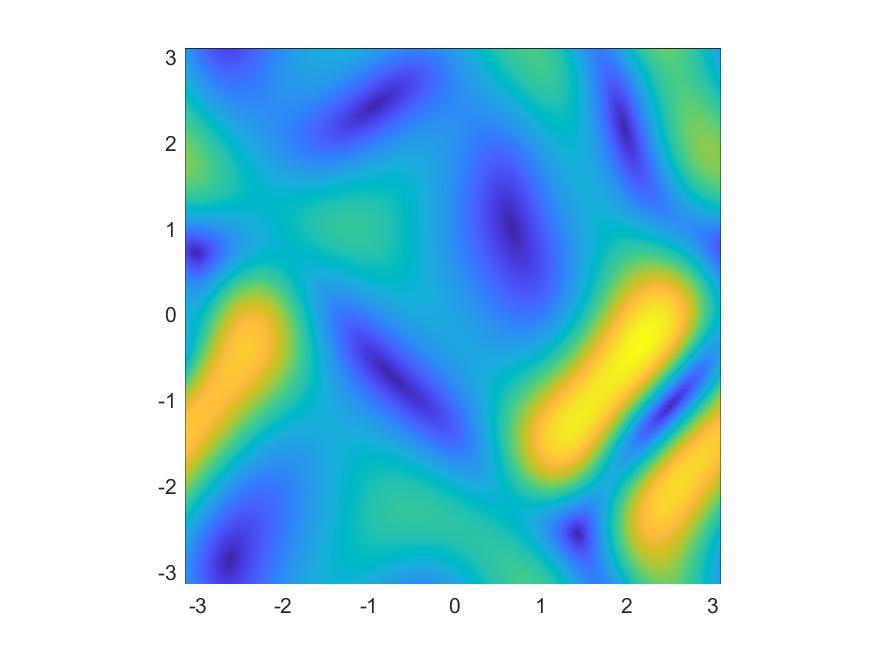}
      \caption{$\epsilon_3 = 0.001$}
      \label{fig:cf1_eps3}
\end{subfigure}
\begin{subfigure}{.242\textwidth}
      \centering
      \includegraphics[width=1.0\linewidth,trim = 47mm 21mm 42mm 17mm, clip]{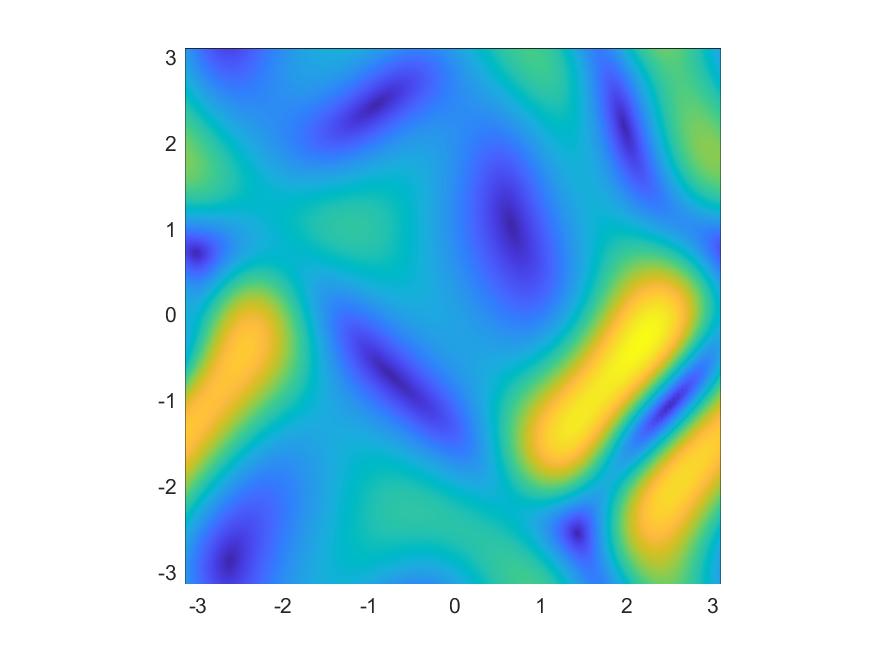}
      \caption{KSE}
      \label{fig:cf1_KSE}
\end{subfigure}
    \caption{
    \label{fig:Dynamics_type3} Column \ref{fig:cf1_KSE} is a solution to KSE \eqref{KSE} for $t= 1, \ldots, 5$, whereas columns \ref{fig:cf1_eps1}, \ref{fig:cf1_eps2}, and \ref{fig:cf1_eps3} are type 3 solutions to calmed KSE \eqref{cKSE} on the same time interval with $\epsilon \in \{0.1, 0.01, 0.001 \}$. In this figure, $\lambda = 4.1$ is fixed and initial data $\bu_0$ is given in \eqref{IniData0}. Viewing the pictures from left to right, we can see that $\bu^\epsilon \to \bu$ as $\epsilon\to 0$.} 
\end{figure}
\FloatBarrier
In Figure \ref{fig:Dynamics_type3} we focus only on type $3$ approximations to better illustrate how well calmed KSE solutions can approximate KSE solutions over time for various choices of $\epsilon$. Indeed, when viewed from left to right we can observe the convergence of our calmed KSE solutions to the original KSE solution.

\begin{figure}
    \centering
    \includegraphics[width=1.1\linewidth,trim = 40mm 14mm 1mm 20mm, clip]{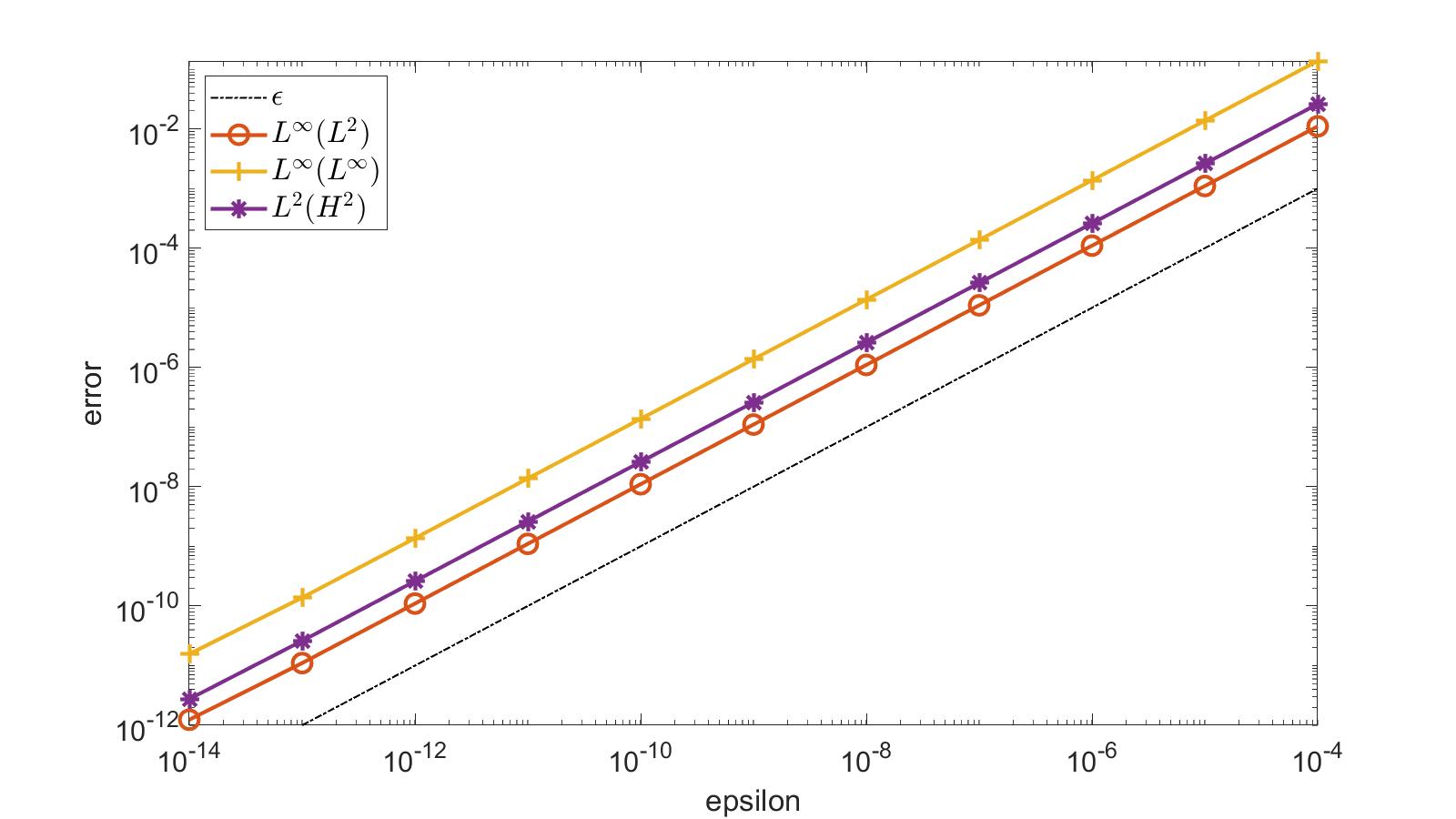} 
    \caption{Estimates of $\bu - \bu^\epsilon $ vs. $\epsilon$ in norms $\| \cdot \|_{L^\infty(0,T;L^2)}$, $\| \cdot \|_{L^\infty(0,T;L^\infty)}$, and $\| \cdot \|_{L^2(0,T;H^2)}$, at time $T=1$ with $\bu^\epsilon$ a type $1$ solution and with initial data given by \eqref{IniData0}. These estimates show a linear convergence rate.}
    \label{fig:ic1t1}
\end{figure}

\begin{figure}
    \centering
    \includegraphics[width=1.1\linewidth,trim = 40mm 14mm 1mm 20mm, clip]{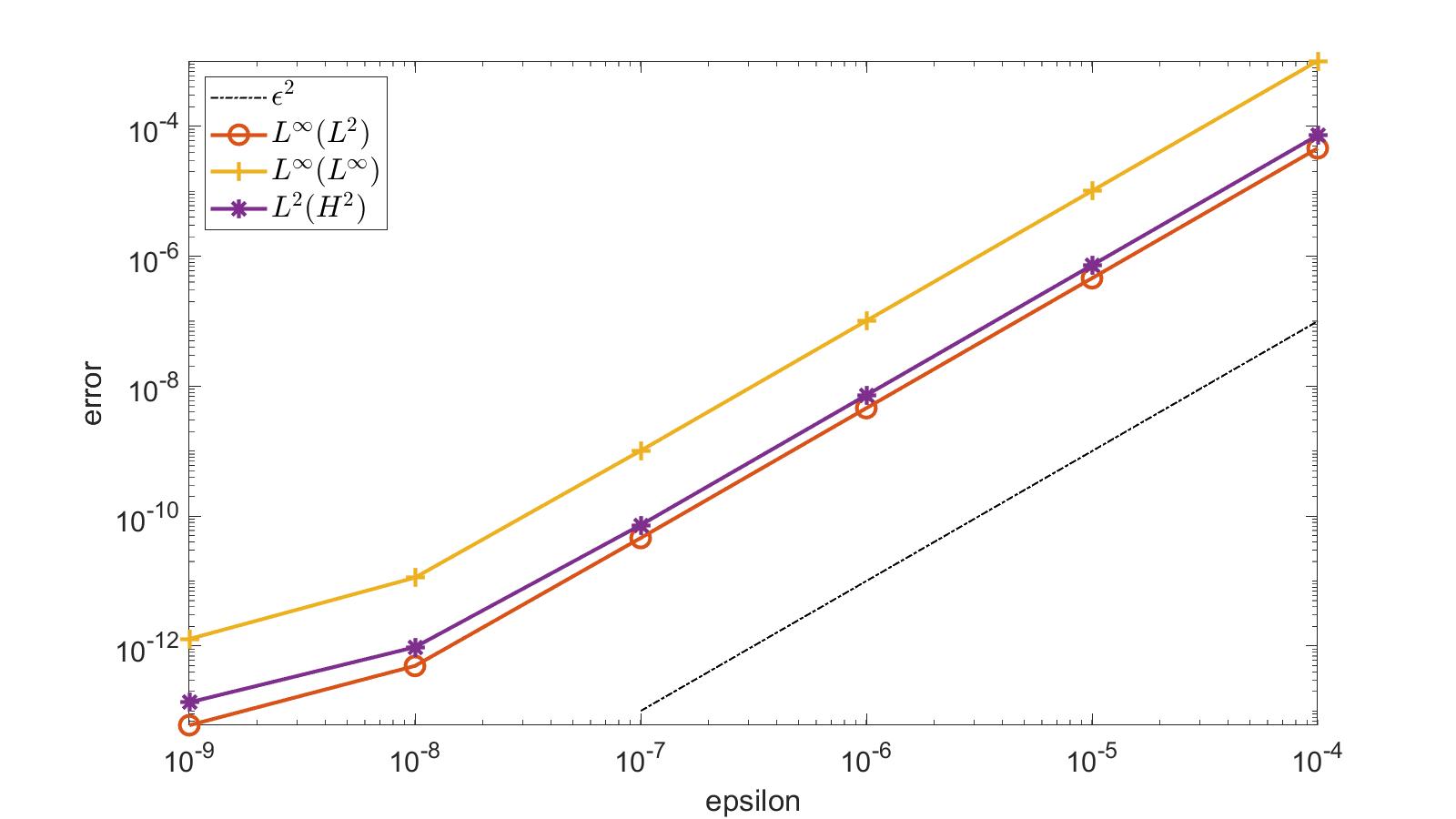} 
   \caption{Estimates of $\bu - \bu^\epsilon $ vs. $\epsilon$ in norms $\| \cdot \|_{L^\infty(0,T;L^2)}$, $\| \cdot \|_{L^\infty(0,T;L^\infty)}$, and $\| \cdot \|_{L^2(0,T;H^2)}$, at time $T=1$ with $\bu^\epsilon$ type a $2$ solution and with initial data given by \eqref{IniData0}. These estimates show a quadratic convergence rate.}
    \label{fig:ic1t2}
\end{figure}

\begin{figure}
    \centering
    \includegraphics[width=1.1\linewidth,trim = 40mm 14mm 1mm 20mm, clip]{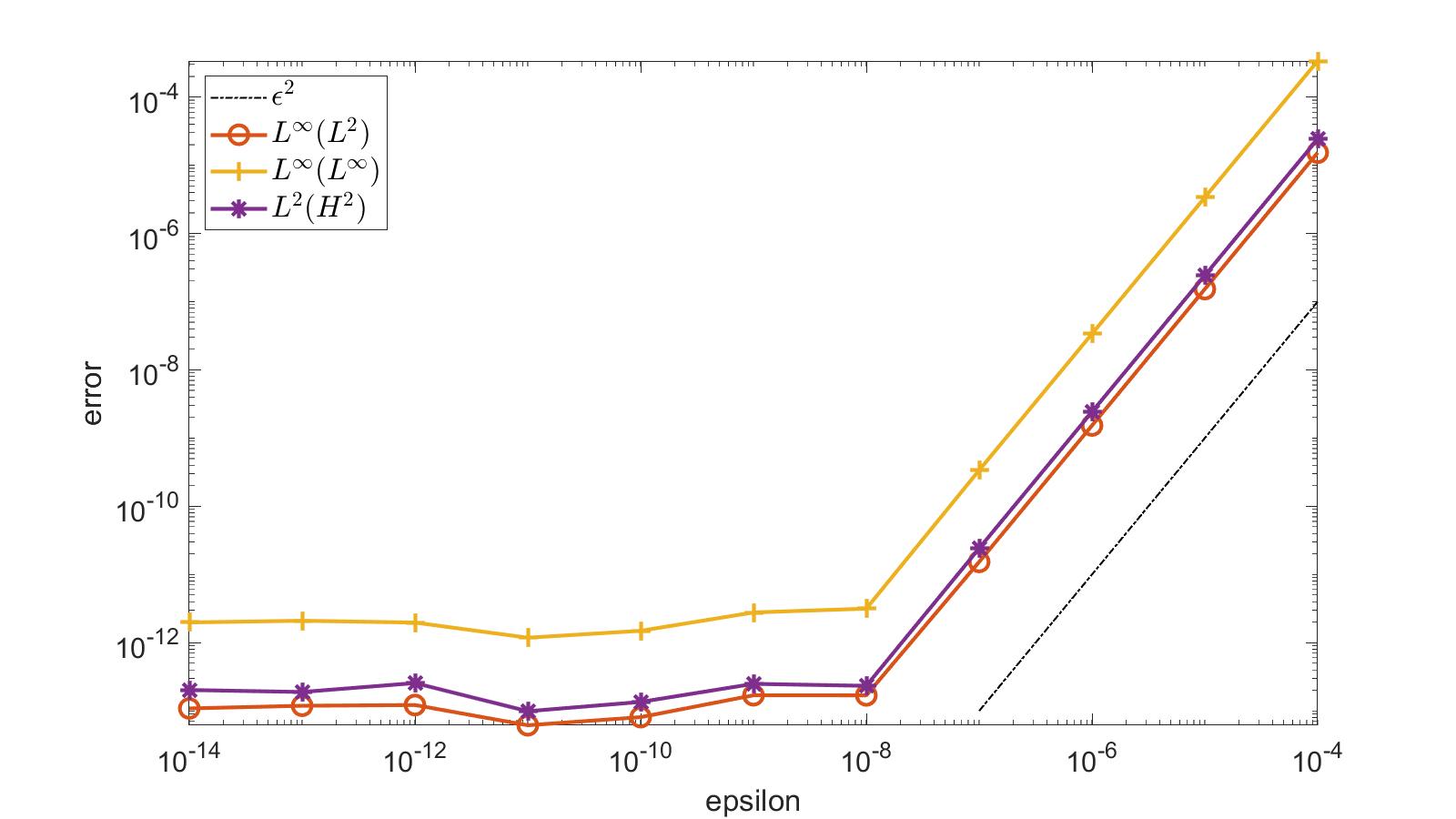} 
    \caption{Estimates of $\bu - \bu^\epsilon $ vs. $\epsilon$ in norms $\| \cdot \|_{L^\infty(0,T;L^2)}$, $\| \cdot \|_{L^\infty(0,T;L^\infty)}$, and $\| \cdot \|_{L^2(0,T;H^2)}$, at time $T=1$ with $\bu^\epsilon$ a type $3$ solution and with initial data given by \eqref{IniData0}. These estimates show a quadratic convergence rate.}
    \label{fig:ic1t3}
\end{figure}

In accordance with Corollary \ref{conv_particular} we see that solutions to calmed KSE corresponding to calming function $\bee_1$ yield a linear convergence rate whereas solutions to calmed KSE corresponding to calming functions $\bee_2$ or $\bee_3$ yield quadratic convergence rates. 

For additional testing, we choose initial data with higher oscillation and higher magnitude,
\begin{align} \label{IniData1}
\bu_0(x,y) = \binom{4 \lp \cos(x+y) + \sin(3x) \rp}
                   {4 \lp \cos(x+y) + \cos(4y) \rp},
\end{align}
and examine the convergence rates for each solution type. For each test of convergence, we fix $N=128^2$,  $T = 1$ and $\lambda = 4.1$. 


\begin{figure}
    \centering
    \includegraphics[width=1.1\linewidth,trim = 40mm 14mm 1mm 20mm, clip]{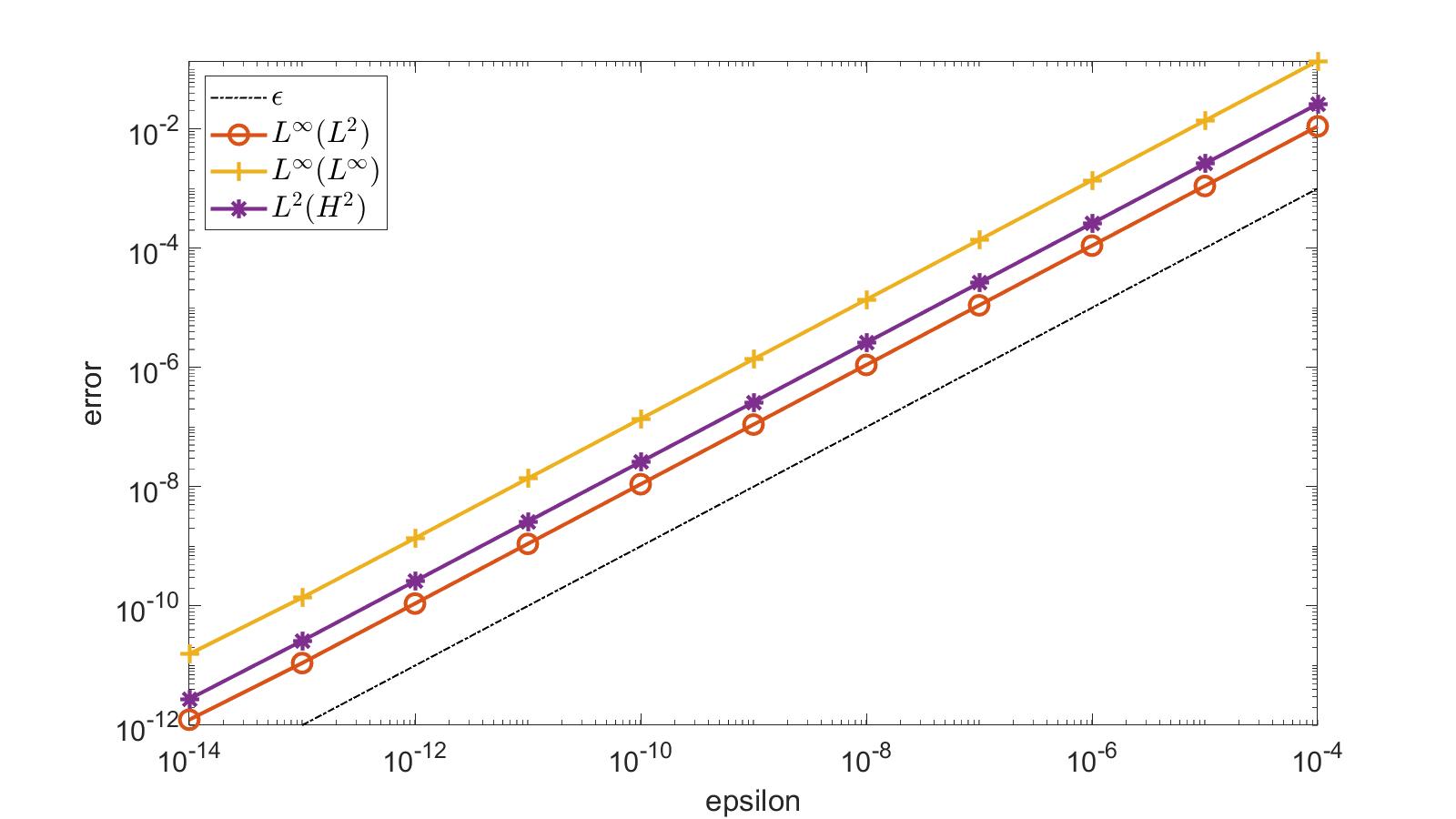} 
    \caption{Estimates of $\bu - \bu^\epsilon $ vs. $\epsilon$ in norms $\| \cdot \|_{L^\infty(0,T;L^2)}$, $\| \cdot \|_{L^\infty(0,T;L^\infty)}$, and $\| \cdot \|_{L^2(0,T;H^2)}$, at time $T=1$ with $\bu^\epsilon$ a type $1$ solution and with initial data given by \eqref{IniData1}. These estimates show a linear convergence rate.}
    \label{fig:ic2t1}
\end{figure}

\begin{figure}
    \centering
    \includegraphics[width=1.1\linewidth,trim = 40mm 14mm 1mm 20mm, clip]{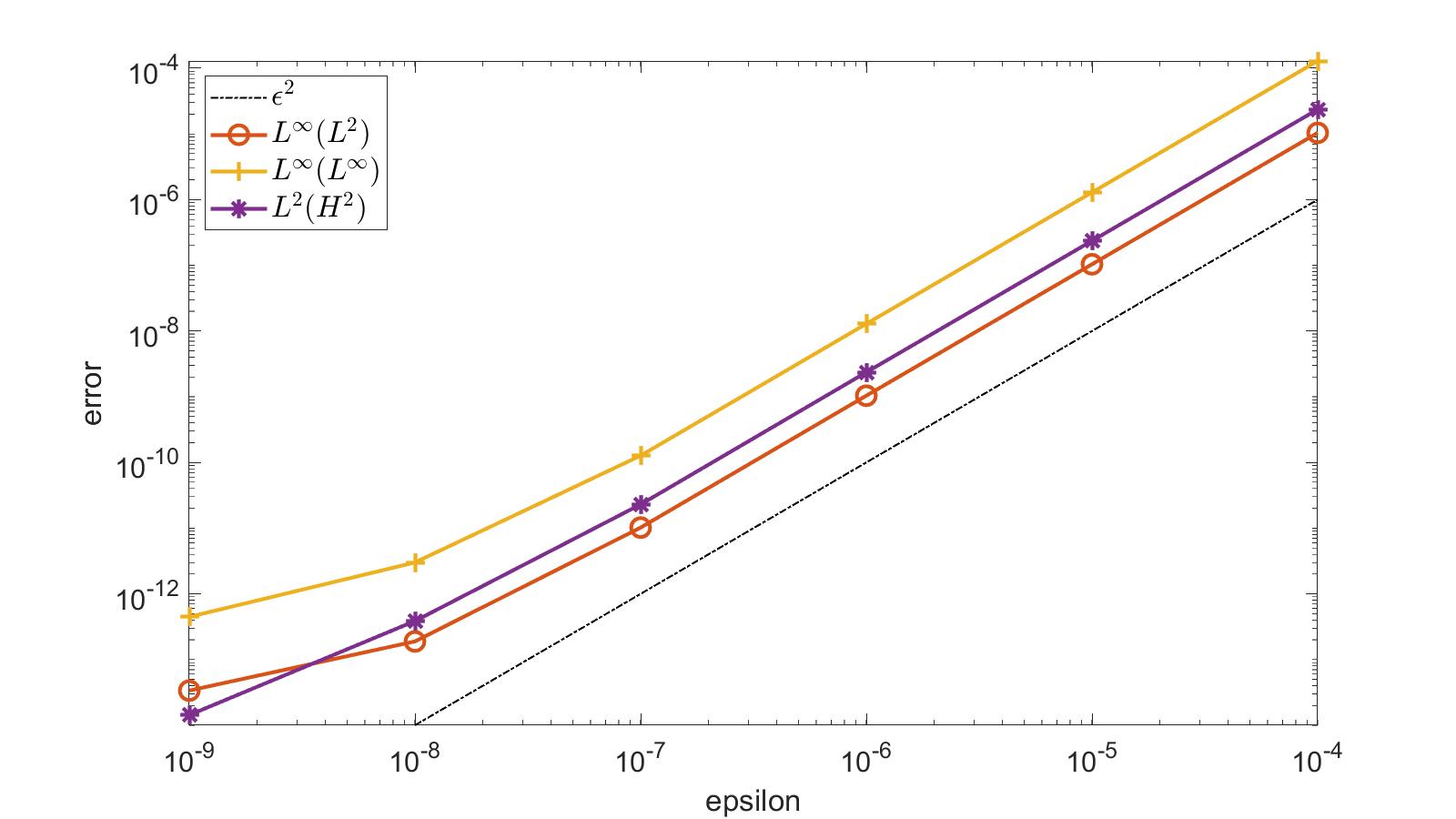} 
   \caption{Estimates of $\bu - \bu^\epsilon $ vs. $\epsilon$ in norms $\| \cdot \|_{L^\infty(0,T;L^2)}$, $\| \cdot \|_{L^\infty(0,T;L^\infty)}$, and $\| \cdot \|_{L^2(0,T;H^2)}$, at time $T=1$ with $\bu^\epsilon$ type a $2$ solution and with initial data given by \eqref{IniData1}. These estimates show a quadratic convergence rate.}
    \label{fig:ic2t2}
\end{figure}

\begin{figure}
    \centering
    \includegraphics[width=1.1\linewidth,trim = 40mm 14mm 1mm 20mm, clip]{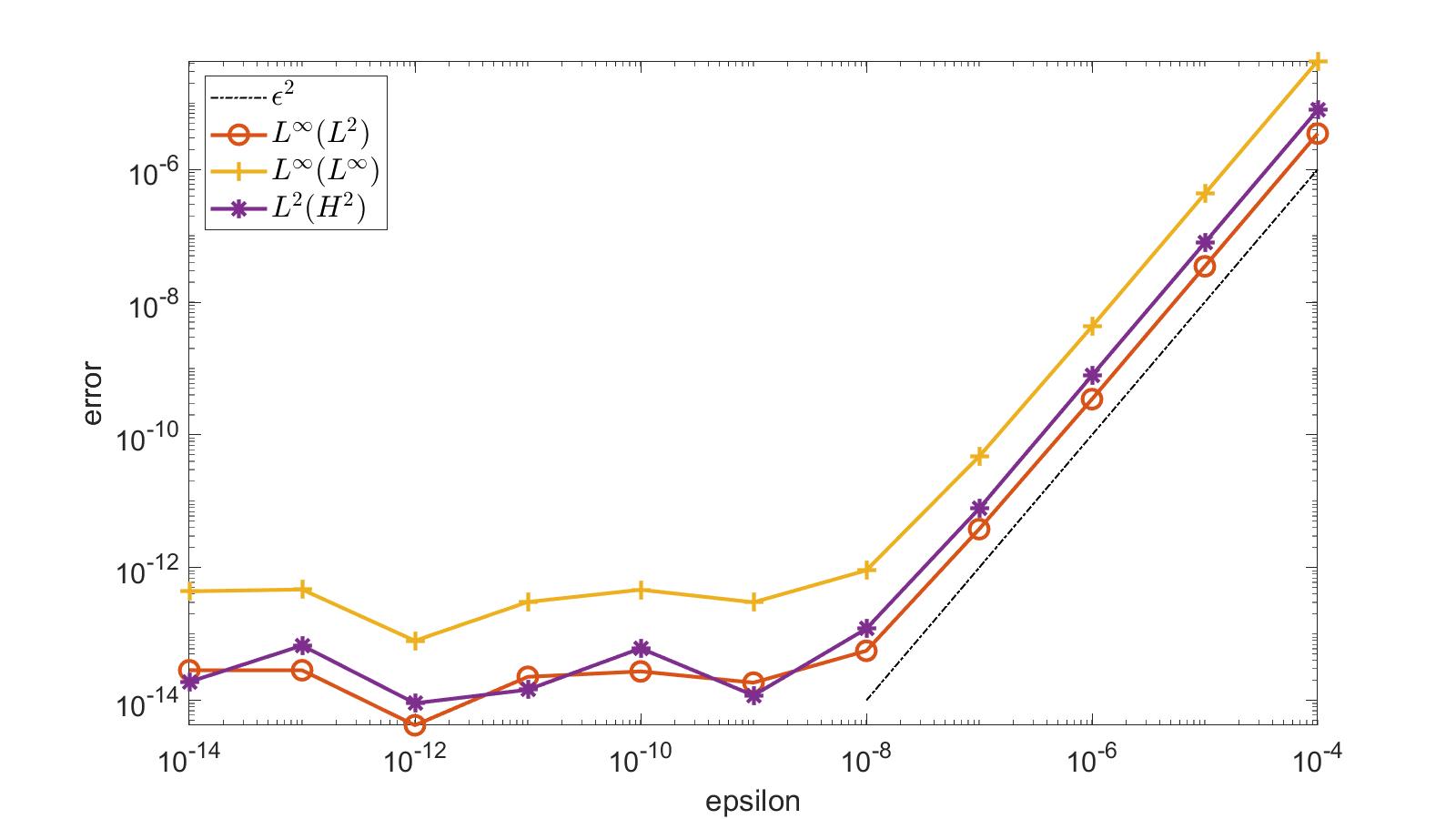} 
    \caption{Estimates of $\bu - \bu^\epsilon $ vs. $\epsilon$ in norms \newline $\| \cdot \|_{L^\infty(0,T;L^2)}$, $\| \cdot \|_{L^\infty(0,T;L^\infty)}$, and $\| \cdot \|_{L^2(0,T;H^2)}$, at time $T=1$ with $\bu^\epsilon$ a type $3$ solution and with initial data given by \eqref{IniData1}. These estimates show a quadratic convergence.}
    \label{fig:ic2t3}
\end{figure}

\FloatBarrier

We observe that even with larger choice of initial data, Figures \ref{fig:ic2t1}, \ref{fig:ic2t2}, and \ref{fig:ic2t3} remain qualitatively similar to Figures \ref{fig:ic1t1}, \ref{fig:ic1t2}, and \ref{fig:ic1t3}. This computational result is again in accordance with Corollary \ref{conv_particular}.

\section{Conclusions}\label{sec_conclusions}
\noindent
We introduced new modifications of the 2D Kuramoto-Sivashinsky equation, in both scalar and vector forms, with a ``calming-parameter'' $\epsilon>0$ that we call the ``calmed Kuramoto-Sivashinsky equation,'' and proved that associated PDEs are globally well-posed in the sense of Hadamard.  Moreover, we proved that, under suitable conditions on the calming function $\bee$, that (on the time interval of existence and uniqueness of solutions to the KSE) the solutions of the calmed equation converge to solutions of the KSE as $\epsilon\rightarrow0^+$ at a certain algebraic rate.  Moreover, our computational simulations indicate that this rate is sharp.  To the best of our knowledge, this is the first globally well-posed PDE model whose solutions approximate  solutions to the 2D Kuramoto-Sivashinsky equation with arbitrary precision, at least before the potential blow-up time of the latter.

In addition, we note that this ``calming'' technique can be applied to a wide variety of other equations, which we will investigate in several forthcoming works.

\section*{Acknowledgments}
 \noindent
The authors would like to thank Professor Huy Nguyen for helpful discussions.
A.L. would like to thank the Isaac Newton Institute for Mathematical Sciences, Cambridge, for support and warm hospitality during the programme ``Mathematical aspects of turbulence: where do we stand?'' where work on this paper was undertaken. This work was supported by EPSRC grant no EP/R014604/1.
M.E. and A.L. were partially supported by NSF Grants DMS-2206762 and CMMI-1953346. 
J.W. was partially supported by NSF Grant DMS 2104682 and the AT\&T Foundation at Oklahoma State University.
\begin{scriptsize}
\bibliographystyle{abbrv}

\end{scriptsize}

\end{document}